\title{Quantifying local embeddings into finite groups}
\author{Henry Bradford}
\newtheorem{thm}{Theorem}[section]
\newtheorem{lem}[thm]{Lemma}
\newtheorem{propn}[thm]{Proposition}
\newtheorem{coroll}[thm]{Corollary}
\newtheorem{defn}[thm]{Definition}
\newtheorem{ex}[thm]{Example}
\newtheorem{prob}[thm]{Problem}
\newtheorem{rmrk}[thm]{Remark}
\newtheorem{qu}[thm]{Question}
\DeclareMathOperator{\Alt}{Alt}
\DeclareMathOperator{\Cay}{Cay}
\DeclareMathOperator{\fin}{fin}
\DeclareMathOperator{\GL}{GL}
\DeclareMathOperator{\Ham}{Ham}
\DeclareMathOperator{\hyp}{hyp}
\DeclareMathOperator{\im}{im}
\DeclareMathOperator{\lin}{lin}
\DeclareMathOperator{\Schr}{Schr}
\DeclareMathOperator{\SL}{SL}
\DeclareMathOperator{\sof}{sof}
\DeclareMathOperator{\Sym}{Sym}
\begin{document}

\maketitle

\begin{abstract}
We study a function $\mathcal{L}_{\Gamma}$ which quantifies the LEF 
(local embeddability into finite groups) property 
for a finitely generated group $\Gamma$. 
We compute this \emph{LEF growth} function in some examples, 
including certain wreath products. 
We compare LEF growth with the analogous quantitative 
version of residual finiteness, 
and exhibit a family of finitely generated residually finite 
groups which nevertheless admit many more 
local embeddings into finite groups than they do finite quotients. 
Along the way, we give a new proof that 
B.H. Neumann's continuous family of $2$-generated groups 
contains no finitely presented group, 
a result originally due to Baumslag and Miller. 
We compare $\mathcal{L}_{\Gamma}$ with quantitative versions of soficity 
and other metric approximation properties of groups. 
Finally, we show that there exists a ``universal'' function 
which is an upper bound on the LEF growth of any group 
on a given number of generators, and that 
(for non-cyclic groups) any such function is non-computable. 
\end{abstract}

\section{Introduction}

A group $\Gamma$ is \emph{LEF} 
(locally embeddable into finite groups) 
if the multiplication in $\Gamma$ agrees, 
on any finite subset $F$, 
with the multiplication in some finite group $Q_F$; 
a map from $F$ to $Q_F$ witnessing the agreement is called a 
\emph{local embedding}. 
LEF is one of several properties declaring that a 
(generally infinite) group is in some 
sense \emph{well-approximated} by finite groups. 
In recent years there has been great interest in functions which 
\emph{quantify} approximation properties of finitely generated groups. 
There is now a well-established literature 
on quantitative aspects of 
\emph{residual finiteness} \cite{BouRab,BoRaMcR,BoRaCheTim}, 
and a growing understanding of 
quantitative \emph{conjugacy stability} 
\cite{LaLoMc,Peng} and \emph{subgroup separability} \cite{DePe}. 
The \emph{LEF growth} function that is the subject of this paper 
was first defined in \cite{ArzChe} 
(a closely related invariant was independently 
introduced in \cite{BoRaStud}) 
and discussed in the PhD thesis of M. Cavaleri \cite{Cava}, 
but our understanding of its behaviour has been hitherto undeveloped. 

Let $\Gamma$ be a LEF group with finite generating set $S$. 
The \emph{LEF growth function} $\mathcal{L}_{\Gamma} ^S(n)$ 
is defined to be the smallest order of a finite group $Q$ admitting a 
local embedding from the ball $B_S (n)$ of radius $n$ 
in the word metric induced on $\Gamma$ by $S$. 
The dependence of the function $\mathcal{L}_{\Gamma} ^S$ 
on $S$ is slight, so for the rest of the Introduction 
we suppress $S$ from our notation. 
As noted in \cite{ArzChe,BoRaStud}, 
$\mathcal{L}_{\Gamma} (n)$ may be compared 
with the \emph{full residual finiteness growth function} 
$\mathcal{R}_{\Gamma} (n)$ of a residually finite group $\Gamma$, 
which is the minimal order of a finite quotient of 
$\Gamma$ into which $B_S (n)$ injects. 
Every finite quotient of $\Gamma$ which is injective on 
$F \subseteq \Gamma$ clearly restricts to a local embedding of $F$, 
so $\mathcal{R}_{\Gamma}$ is an upper bound 
for $\mathcal{L}_{\Gamma}$, 
for every finitely generated group $\Gamma$. 
It is however far from true that every local embedding 
into a finite group is the restriction of a finite quotient. 
As such, one may expect that, even among residually finite groups, 
there are finitely generated groups whose full residual finiteness 
growth is strictly faster than their LEF growth. 
We show that this is indeed the case, in a rather strong sense. 

\begin{thm} \label{MainThm}
There is a class $\mathcal{C}$ of $2$-generated groups 
and a function $f_0 : \mathbb{N} \rightarrow \mathbb{N}$ 
satisfying the following: 
\begin{itemize}
\item[(i)] For all $f : \mathbb{N} \rightarrow \mathbb{N}$, 
there exists $B_f \in \mathcal{C}$ such that 
$f \preceq \mathcal{R}_{B_f}$;  

\item[(ii)] For all $\Gamma \in \mathcal{C}$, 
$\mathcal{L}_{\Gamma} \preceq f_0$; 

\item[(iii)] $f_0 (n) \preceq \exp (\exp(n))$. 

\end{itemize}
\end{thm}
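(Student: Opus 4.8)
The plan is to take $\mathcal{C}$ to be (a sub-family of) B.H. Neumann's continuous family of $2$-generated groups. Recall the construction: fix for each prime $p \geq 5$ a $3$-cycle $a_p$ and an element $b_p$ of order $p$ with $\langle a_p, b_p \rangle = \Alt(p)$, chosen uniformly so that, writing $\phi_p \colon F_2 = \langle x, y\rangle \rightarrow \Alt(p)$ for the map $x \mapsto a_p$, $y \mapsto b_p$ and $N_p = \ker \phi_p$, the relations of bounded length stabilise: there is a (slowly growing) function $p_0$ such that for every word $w$ with $|w| \leq n$ and all primes $p, q \geq p_0(n)$ one has $w \in N_p \iff w \in N_q$. (This Stability Lemma is the mechanism behind the Baumslag--Miller theorem reproved earlier in the paper, and I use it in that form.) For a set of primes $\mathcal{P}$ put $G_{\mathcal{P}} = F_2 / \bigcap_{p \in \mathcal{P}} N_p \leq \prod_{p \in \mathcal{P}} \Alt(p)$, generated by the images of $x, y$, and let $\mathcal{C} = \{ G_{\mathcal{P}} : \mathcal{P}\text{ infinite}\}$. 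Each $G_{\mathcal{P}}$ is residually finite, hence LEF.

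For (ii) and (iii), fix $\mathcal{P} \in \mathcal{C}$ and $n$, and set $m = p_0(3n)$, $\mathcal{P}_0 = \mathcal{P} \cap [5, m)$, and let $q^{*}$ be the least prime $\geq m$ (which need not lie in $\mathcal{P}$); since $\mathcal{P}$ is infinite it contains a prime $p_1 \geq m$. Define $Q = \prod_{p \in \mathcal{P}_0} \Alt(p) \times \Alt(q^{*})$ and $\psi \colon B_S(n) \rightarrow Q$ by choosing for each $g$ a word $w_g$ of length $\leq n$ representing it and setting $\psi(g) = \big( (\phi_p(w_g))_{p \in \mathcal{P}_0}, \phi_{q^{*}}(w_g) \big)$. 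Then $\psi$ is a local embedding: for well-definedness and multiplicativity one checks that if $u \in F_2$ has $|u| \leq 3n$ and $u \in \bigcap_{p \in \mathcal{P}} N_p$ then $\phi_p(u) = 1$ for $p \in \mathcal{P}_0$ trivially, while $u \in N_{p_1}$ together with $p_1, q^{*} \geq m = p_0(3n)$ and the Stability Lemma force $u \in N_{q^{*}}$; and for injectivity, if $g \neq h$ in $B_S(n)$ then $v := w_g w_h^{-1} \notin \bigcap_{p \in \mathcal{P}} N_p$ with $|v| \leq 2n$, so $\phi_p(v) \neq 1$ for some $p \in \mathcal{P}$, and either $p \in \mathcal{P}_0$ or $p \geq m$, in which case the Stability Lemma gives $\phi_{q^{*}}(v) \neq 1$. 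Hence $\mathcal{L}_{G_{\mathcal{P}}}^{S}(n) \leq |Q| \leq \prod_{p \leq q^{*}} (p!/2) =: f_0(n)$, a bound independent of $\mathcal{P}$; since $q^{*} \leq 2 p_0(3n)$ by Bertrand's postulate, the bound on $p_0$ established earlier yields $f_0(n) \leq (q^{*}!)^{q^{*}} = \exp(\exp(O(n)))$, giving (iii).

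For (i), given $f$ (which we may assume non-decreasing) build $\mathcal{P}_f = \{ p_1 < p_2 < \cdots \}$ recursively: take $p_1 = 5$, and having chosen $p_1, \ldots, p_k$ let $p_{k+1}$ be the least prime with $p_{k+1}!/2 \geq f(p_k!)$. Set $B_f = G_{\mathcal{P}_f} \in \mathcal{C}$. The element $y^{p_k!}$ lies in $B_S(p_k!)$ and is nontrivial in $G_{\mathcal{P}_f}$, since $\phi_q(y^{p_k!}) = b_q^{p_k!}$ is trivial exactly when $q \mid p_k!$, i.e. $q \leq p_k$; in particular $y^{p_k!}$ is nontrivial only in the factors $\Alt(q)$ with $q \in \mathcal{P}_f$, $q \geq p_{k+1}$. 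By Neumann's description of the normal subgroup lattice of these groups, every finite quotient of $G_{\mathcal{P}_f}$ is, via the natural projection, isomorphic to $\prod_{p \in S} \Alt(p)$ for some finite $S \subseteq \mathcal{P}_f$; such a quotient separates $y^{p_k!}$ from the identity only if $S$ contains a prime $\geq p_{k+1}$, whence its order is at least $p_{k+1}!/2 \geq f(p_k!)$. Therefore $\mathcal{R}_{B_f}^{S}(p_k!) \geq f(p_k!)$ for every $k$, and since both sides are non-decreasing this gives $f \preceq \mathcal{R}_{B_f}$.

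The main obstacle is part (ii): one must produce, for a \emph{single} $f_0$, a local embedding of $B_S(n)$ into a group of controlled size for every member of $\mathcal{C}$ simultaneously, even though these groups are genuinely different and, by (i), their finite quotients can be forced to be arbitrarily large. The Stability Lemma is exactly what makes this work: a bounded ball can be captured using only the "small" primes of $\mathcal{P}$ together with one auxiliary coordinate $\Alt(q^{*})$ that stands in uniformly for all the large primes, whether or not $\mathcal{P}$ actually contains any of them. A secondary point needing care, used only for (i), is the precise structure of the finite quotients of $G_{\mathcal{P}}$ (classical, due to Neumann); this is what converts the observation "the short word $y^{p_k!}$ becomes nontrivial only in a huge alternating factor" into the quantitative lower bound on $\mathcal{R}_{B_f}$.
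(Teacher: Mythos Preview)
Your treatment of (ii) and (iii) is essentially the paper's mechanism: the ``Stability Lemma'' you invoke is exactly the local isomorphism of Schreier graphs established in Example~\ref{ncyc3cycex}/Lemma~\ref{permisolem}, and replacing all large alternating factors by one small stand-in $\Alt(q^*)$ with $q^* = O(n)$ gives the $\exp(\exp(n))$ bound just as Theorem~\ref{GenNeumUBThm} does.

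Part (i), however, is broken in two independent ways.

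First, the assertion that every finite quotient of $G_{\mathcal{P}}$ is $\prod_{p\in S}\Alt(p)$ is false.  By Lemma~\ref{NeumQuotLem}, $N(d,1)$ (and likewise your $G_{\mathcal{P}}$) surjects onto $B_\infty$, which in turn surjects onto $\mathbb{Z}$; hence $G_{\mathcal{P}}$ has arbitrary finite cyclic quotients.  Your witness $y^{p_k!}$ maps to a generator of $\mathbb{Z}$ under this surjection, so it is separated from the identity already in $\mathbb{Z}/m$ for any prime $m>p_k$, a quotient of order $\leq 2p_k$, not $\geq p_{k+1}!/2$.  Thus your lower bound on $|Q|$, which uses only the single element $y^{p_k!}$, fails.

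Second, even if you did have $\mathcal{R}_{B_f}(p_k!)\geq f(p_k!)$ for every $k$, monotonicity alone does not give $f\preceq\mathcal{R}_{B_f}$: for $n$ just above $p_{k-1}!$ you only know $\mathcal{R}_{B_f}(Cn)\geq f(p_{k-1}!)$, and there is no fixed $C$ with $p_k!\leq Cn$, since $p_k!/p_{k-1}!\to\infty$.

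The paper circumvents both obstacles by taking $\mathcal{C}$ to be the Bou--Rabee--Seward groups $N(d,q)$ with $q\not\equiv 1$.  The number-theoretic choice in Proposition~\ref{BoRaSewaNoProp} produces \emph{short} words $w_n\in B_S(4p(n)+4)$ surviving in exactly one factor $\Alt(d(n))$ with $d(n)\geq 2f(4p(n+1)+4)$ (Proposition~\ref{BRSkeppropn}).  The point is that the power $q(n)$ in the generator $\alpha_{d(n)}^{q(n)}$ lets one detect a \emph{huge} alternating factor with a word of \emph{linear} length in $p(n)$; with the original Neumann generators ($q\equiv 1$) used in your $G_{\mathcal{P}}$, no such short detectors are available, and that is precisely why (i) requires the modified construction.
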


Here $\prec$ and $\approx$ denote 
comparison of growth functions 
up to rescaling of the argument 
(see Definition \ref{comparisondefn} below). 
The existence of the class $\mathcal{C}$ 
immediately implies a proof of the following. 

\begin{coroll} \label{MainThmCor}
There exist finitely generated residually finite groups $\Gamma$ 
with the property that 
$\mathcal{L}_{\Gamma} \precnapprox \mathcal{R}_{\Gamma}$. 
\end{coroll}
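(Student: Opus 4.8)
The plan is to derive the Corollary directly from Theorem \ref{MainThm}, the substantive content having already been packaged into the construction of the class $\mathcal{C}$. Recall that for every finitely generated group one always has $\mathcal{L}_{\Gamma} \preceq \mathcal{R}_{\Gamma}$, as observed in the Introduction (a finite quotient injective on a finite set $F$ restricts to a local embedding of $F$). So the only thing left to arrange is a single group $\Gamma \in \mathcal{C}$ for which $\mathcal{R}_{\Gamma} \not\preceq \mathcal{L}_{\Gamma}$, equivalently $\mathcal{L}_{\Gamma} \precnapprox \mathcal{R}_{\Gamma}$.

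First I would fix a function $f : \mathbb{N} \rightarrow \mathbb{N}$ growing strictly faster than the universal bound $f_0$ --- concretely, any $f$ with $\exp(\exp(n)) \precnapprox f$ will do, for instance $f(n) = 2^{2^{2^n}}$ --- and apply part (i) of Theorem \ref{MainThm} to obtain $B_f \in \mathcal{C}$ with $f \preceq \mathcal{R}_{B_f}$. Note that $\mathcal{R}_{B_f}$ then takes finite values, i.e. $B_f$ is residually finite; this is implicit in part (i) and is exactly what makes $B_f$ a legitimate witness for a statement about residually finite groups. By parts (ii) and (iii), $\mathcal{L}_{B_f} \preceq f_0$ and $f_0 \preceq \exp(\exp(n))$.

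Next I would chase the comparison relations. From $f_0 \preceq \exp(\exp(n)) \precnapprox f$ and transitivity of $\preceq$ we get $f_0 \preceq f$; moreover $f_0 \approx f$ is impossible, since it would force $f \preceq \exp(\exp(n))$, contradicting $\exp(\exp(n)) \precnapprox f$. Hence $f_0 \precnapprox f$, and therefore $\mathcal{L}_{B_f} \preceq f_0 \precnapprox f \preceq \mathcal{R}_{B_f}$. Combined with the always-valid bound $\mathcal{L}_{B_f} \preceq \mathcal{R}_{B_f}$, this yields $\mathcal{L}_{B_f} \precnapprox \mathcal{R}_{B_f}$, proving the Corollary with $\Gamma = B_f$.

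There is essentially no obstacle at this stage: the Corollary is a formal consequence of Theorem \ref{MainThm} together with elementary manipulation of the growth-comparison preorder. The one small point requiring care is purely bookkeeping --- checking that $\precnapprox$ interacts with $\preceq$ as used above (that $a \preceq b \precnapprox c$ implies $a \precnapprox c$), which is immediate from Definition \ref{comparisondefn}. All the genuine difficulty lives upstream, in the construction of $\mathcal{C}$: arranging that its members simultaneously exhibit arbitrarily fast full residual finiteness growth yet have LEF growth bounded above by the single (doubly exponential) function $f_0$.
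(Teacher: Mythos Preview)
Your proposal is correct and matches the paper's approach: the paper simply states that the existence of the class $\mathcal{C}$ from Theorem \ref{MainThm} immediately yields the Corollary, and your argument is a correct unpacking of that implication. A minor remark: part (iii) is not actually needed here --- parts (i) and (ii) alone suffice, since one can take any $f$ with $f_0 \precnapprox f$ without knowing the explicit doubly-exponential bound on $f_0$ --- but using (iii) to make the choice of $f$ concrete does no harm.
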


Corollary \ref{MainThmCor} answers a question of 
Bou-Rabee and Studenmund \cite{BoRaStud}. 
They introduce a function which is 
equivalent to our $\mathcal{L}_{\Gamma}$ 
(see Proposition \ref{LEFmarkedgrps} below), 
called by them \emph{geometric full residual finiteness growth}, 
and ask whether it is always equivalent to $\mathcal{R}_{\Gamma}$. 
Corollary \ref{MainThmCor} also gives an affirmative answer to a 
stronger version of Question 66 from \cite{ArzChe}, 
concerning the weakly sofic profile of residually finite groups 
(see Section \ref{MetricApproxSect} below for definitions). 

\begin{coroll} \label{WeakSoficCoroll}
There exist finitely generated residually finite groups $\Gamma$ 
with the property that 
$\mathcal{D}_{\Gamma} \precnapprox \mathcal{R}_{\Gamma}$, 
where $\mathcal{D}_{\Gamma}$ is the weakly sofic profile of $\Gamma$. 
\end{coroll}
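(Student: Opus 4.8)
The plan is to derive this as a formal consequence of Corollary \ref{MainThmCor}, using only that the weakly sofic profile never exceeds the LEF growth. Recall that $\mathcal{D}_{\Gamma}$ will be defined in Section \ref{MetricApproxSect}; the single feature I shall use is that $\mathcal{D}_{\Gamma}(n)$ is, up to the conventions fixed there, the least order of a finite group $Q$ carrying a bi-invariant metric that admits a sufficiently faithful and sufficiently multiplicative approximation of the ball $B_S(n)$. The first step is to observe that a local embedding $\varphi : B_S(n) \rightarrow Q$ already provides such an approximation of the best conceivable quality. Endow $Q$ with the discrete bi-invariant metric, in which distinct elements lie at distance $1$. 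Then $\varphi$ is multiplicative on every triple $g, h, gh$ contained in $B_S(n)$ and is injective, so its multiplicative error is $0$ and each non-identity element of $B_S(n)$ is sent to distance exactly $1$ from the identity. Consequently $\mathcal{D}_{\Gamma}(n) \leq \mathcal{L}_{\Gamma}(n)$ for every $n$, and hence $\mathcal{D}_{\Gamma} \preceq \mathcal{L}_{\Gamma}$ for every finitely generated LEF group $\Gamma$.

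The second step is bookkeeping with the preorder $\preceq$. Let $\Gamma$ be a finitely generated residually finite group with $\mathcal{L}_{\Gamma} \precnapprox \mathcal{R}_{\Gamma}$, as furnished by Corollary \ref{MainThmCor}. A residually finite group is LEF (separate finitely many pairs of points of a given finite set in finitely many finite quotients and pass to a common refinement), so the first step applies and gives $\mathcal{D}_{\Gamma} \preceq \mathcal{L}_{\Gamma}$. Combining, $\mathcal{D}_{\Gamma} \preceq \mathcal{L}_{\Gamma} \preceq \mathcal{R}_{\Gamma}$, so in particular $\mathcal{D}_{\Gamma} \preceq \mathcal{R}_{\Gamma}$. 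If we also had $\mathcal{R}_{\Gamma} \preceq \mathcal{D}_{\Gamma}$, then transitivity with $\mathcal{D}_{\Gamma} \preceq \mathcal{L}_{\Gamma}$ would yield $\mathcal{R}_{\Gamma} \preceq \mathcal{L}_{\Gamma}$, contradicting $\mathcal{L}_{\Gamma} \precnapprox \mathcal{R}_{\Gamma}$. Therefore $\mathcal{D}_{\Gamma} \precnapprox \mathcal{R}_{\Gamma}$, which is the corollary.

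I do not anticipate a real obstacle, since the statement is a formal shadow of Corollary \ref{MainThmCor}; the one point demanding care is that the ``zero error, full separation'' approximation produced by a local embedding genuinely meets the quantitative thresholds in the definition of $\mathcal{D}_{\Gamma}$ at scale $n$ (it does, as $0$ is below every positive tolerance and $1$ is the maximal attainable separation), and that $\mathcal{D}_{\Gamma}$ is normalised to count the order of the approximating group rather than, say, its factorial. Should the definition instead insist on realising $Q$ inside a symmetric group via the left-regular representation with the normalised Hamming metric, the pertinent count is still at most $|Q|$, so the bound $\mathcal{D}_{\Gamma} \preceq \mathcal{L}_{\Gamma}$ survives and the argument runs unchanged.
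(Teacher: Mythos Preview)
Your proposal is correct and follows the same overall strategy as the paper: bound the weakly sofic profile above in terms of $\mathcal{L}_{\Gamma}$, then invoke the gap between $\mathcal{L}_{\Gamma}$ and $\mathcal{R}_{\Gamma}$ coming from Theorem~\ref{MainThm}/Corollary~\ref{MainThmCor}.

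There is one small but genuine difference worth noting. The paper does not argue $\mathcal{D}_{\Gamma}^{\fin}\leq\mathcal{L}_{\Gamma}$ directly. Instead it proves the Corollary as the case $\ast=\fin$ of Theorem~\ref{MetricGapThm}, going through the chain $\mathcal{D}_{\Gamma}^{\fin}(n)\leq \mathcal{D}_{\Gamma}^{\sof}(n)!$ (Proposition~\ref{MetricIneqProp}(iii)) and $\mathcal{D}_{\Gamma}^{\sof}(n)\leq\mathcal{L}_{\Gamma}(n)$ (Lemma~\ref{LEFMetricLem}, via the regular representation into $(\Sym(Q),d_{\Ham})$). This only gives $\mathcal{D}_{\Gamma}^{\fin}\leq\mathcal{L}_{\Gamma}!$, which still suffices because $\mathcal{L}_{B_f}$ is bounded by the fixed function $f_0$ from Theorem~\ref{MainThm}(ii). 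Your route --- equipping $Q$ itself with the discrete bi-invariant metric so that a local embedding is already a perfect weak-sofic approximation --- is more direct and yields the sharper inequality $\mathcal{D}_{\Gamma}^{\fin}\leq\mathcal{L}_{\Gamma}$. The paper's detour buys uniformity (all four profiles $\sof,\lin,\hyp,\fin$ are handled at once by quoting the inequalities from \cite{ArzChe}); your argument buys simplicity for the specific weakly sofic case. One caveat: your closing hedge that ``the pertinent count is still at most $|Q|$'' if one is forced through $\Sym(Q)$ is not quite right if the profile were counting $|\Sym(Q)|$ rather than the degree --- but since the weakly sofic class genuinely allows the discrete metric on $Q$ itself, your main argument stands and this fallback is unnecessary.
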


Our proof of Theorem \ref{MainThm} is constructive, and is based on 
an argument of Bou-Rabee and Seward \cite{BoRaSewa}. 
They introduce, for any increasing function $f$, 
an explicit $2$-generated residually finite group $B_f$ 
with the property that $f \preceq \mathcal{R}_{B_f}$. 
Thus, $\mathcal{R}_{\Gamma}$ can grow arbitrarily fast 
among $2$-generated groups $\Gamma$. 
By contrast $\mathcal{L}_{\Gamma}$ 
is bounded above by a ``universal function'' 
depending only on the number of generators. 

\begin{thm} \label{UBThm}
For all $d \geq 1$ there exists a function 
$\Theta_d : \mathbb{N} \rightarrow \mathbb{N}$ 
such that for any $d$-generated LEF group $\Gamma$, 
$\mathcal{L}_{\Gamma} \preceq \Theta_d $. 
For $d \geq 2$, no function with this property is 
bounded above by any recursive function. 
\end{thm}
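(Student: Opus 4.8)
The two parts are proved separately, and only the second requires real work.

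For the first, I would observe that for fixed $d$ and $n$ the isomorphism type of the ball $B_S(n)$ — regarded as a set equipped with the partial binary operation sending $(g,h)$ to $gh$ when $gh \in B_S(n)$ and undefined otherwise — is completely determined by the equivalence relation ``$u =_\Gamma v$'' on the finite set of words over $S \cup S^{-1}$ of length at most $2n$. Hence there are only finitely many such partial structures for each $n$, and each one that actually arises as the ball of a LEF group admits, by definition of LEF, a local embedding into some finite group. Taking $\Theta_d(n)$ to be the maximum, over these finitely many structures, of the least order of such a finite group gives $\mathcal{L}_\Gamma^S \leq \Theta_d$ for every $d$-generated LEF group $\Gamma$ and every generating tuple $S$, which is the first assertion.

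For the non-recursiveness the key point is the following lemma: \emph{if $\Gamma$ is a recursively presented, finitely generated LEF group and $\mathcal{L}_\Gamma \preceq \Theta$ for some recursive function $\Theta$, then $\Gamma$ has solvable word problem.} Granting this, let $\Gamma_0$ be a $2$-generated, recursively presented LEF group (for instance a residually finite one) with \emph{unsolvable} word problem, adjoining trivial generators to pass from $d=2$ to any $d \geq 2$. By the contrapositive of the lemma, $\mathcal{L}_{\Gamma_0} \npreceq g$ for any recursive $g$; consequently, if $\Theta$ is any function with $\mathcal{L}_\Gamma \preceq \Theta$ for all $d$-generated LEF $\Gamma$ and $\Theta$ were bounded above by a recursive $g$, then $\mathcal{L}_{\Gamma_0} \preceq \Theta \preceq g$ would follow, a contradiction. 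To prove the lemma, fix $C$ with $\mathcal{L}_\Gamma(n) \leq \Theta(Cn)$ for all $n$, so that $n \mapsto \Theta(Cn)$ is recursive. I would show that for a word $w$ of length $n$, one has $w \neq_\Gamma 1$ if and only if there are a finite group $Q$ of order at most $\Theta(Cn)$ and a homomorphism $\psi \colon F_S \to Q$ with $\psi(w) \neq 1$ and $\psi(r) = 1$ for every word $r$ of length at most $2n$ with $r =_\Gamma 1$. The implication ``$\Leftarrow$'' is immediate, since if $w =_\Gamma 1$ then $w$ itself is such an $r$. For ``$\Rightarrow$'', since $\Gamma$ is LEF the ball $B_S(n)$ has a local embedding $\phi$ into a finite group $Q$ of order $\mathcal{L}_\Gamma(n) \leq \Theta(Cn)$; because every element of $B_S(n)$ is a product of generators all of whose partial products lie in $B_S(n)$, $\phi$ agrees on $B_S(n)$ with the homomorphism $\psi$ it induces on $S$. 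Injectivity of $\phi$ forces $\psi(w) \neq 1$, and if $|r| \leq 2n$ and $r =_\Gamma 1$ then writing $r = r_1 r_2$ with $|r_i| \leq n$ and using $r_1 =_\Gamma r_2^{-1}$ gives $\psi(r) = 1$. Finally the right-hand side is a co-recursively-enumerable predicate of $w$: there are only finitely many candidate pairs $(Q,\psi)$, their number recursive in $n$, and for each the failure of the condition — ``$\psi(w)=1$, or some word of length $\leq 2n$ trivial in $\Gamma$ is not killed by $\psi$'' — is recursively enumerable since $=_\Gamma$ is. As $\{w : w=_\Gamma 1\}$ is recursively enumerable too, the word problem is solvable.

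The main obstacle is the existence of $\Gamma_0$: a $2$-generated, recursively presented LEF group with unsolvable word problem. This is the genuine content, and I would either invoke it from the literature on algorithmically complex residually finite groups, or construct it directly in the spirit of the Bou-Rabee–Seward groups, encoding membership of a fixed recursively enumerable non-recursive set into the local structure of a residually finite $2$-generated group in such a way that the word problem cannot avoid it while the presentation remains recursive. A secondary technical point in the lemma is ensuring that, once $\Theta(Cn)$ is computed, the family of candidate pairs $(Q,\psi)$ is effectively listable and that the co-enumeration of ``bad'' pairs is effective — which it is, precisely because each $\psi$ need only be tested against the finitely many words of length $\leq 2n$, whose triviality in $\Gamma$ is semi-decidable.
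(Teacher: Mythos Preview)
Your argument for the existence of $\Theta_d$ is correct and is essentially the paper's: there are only finitely many isomorphism types of $n$-balls in $d$-generated groups, and for each one that is realized in a LEF group one records the least order of a finite witness.

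For the non-recursiveness, however, your approach diverges from the paper's and contains a genuine gap. The decidability step in your key lemma is wrong: you show that, for each candidate pair $(Q,\psi)$, \emph{failure} of the witnessing condition is recursively enumerable (one eventually discovers a short relator of $\Gamma$ not killed by $\psi$), whence ``all pairs fail'' is r.e., whence the right-hand side --- which is equivalent to $w \neq_\Gamma 1$ --- is \emph{co}-r.e. But co-recursive-enumerability of $\{w : w \neq_\Gamma 1\}$ is precisely the same statement as recursive enumerability of $\{w : w =_\Gamma 1\}$, which you already had from recursive presentability; you have established the same fact twice. To conclude solvability of the word problem you would need $\{w : w \neq_\Gamma 1\}$ to be r.e., and your argument does not provide this: certifying that a particular $(Q,\psi)$ is a valid witness requires verifying that \emph{every} short relator of $\Gamma$ is killed by $\psi$, and since the relator set is only semi-decidable this verification cannot terminate in finite time. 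There is no evident repair --- candidate pairs with strictly smaller short kernel than the true one may survive for an uncomputably long portion of the enumeration.

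The paper proceeds by an entirely different route and avoids the word-problem detour. It invokes the theorem of Kharlampovich--Myasnikov--Sapir that for every recursive $f$ there is a \emph{finitely presented} residually finite group $\Delta(f)$ with $\mathcal{R}_{\Delta(f)} \succeq f$; finite presentability then gives $\mathcal{L}_{\Delta(f)} \approx \mathcal{R}_{\Delta(f)} \succeq f$ immediately via Proposition~\ref{LEFRFpropn}. Wilson's embedding theorem places $\Delta(f)$ inside a $2$-generated residually finite group $\hat{\Delta}(f)$, and Lemma~\ref{subgrplem} yields $\mathcal{L}_{\hat{\Delta}(f)} \succeq \mathcal{L}_{\Delta(f)} \succeq f$. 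Hence $\Theta_2 \succeq f$ for every recursive $f$, and a short diagonalization lemma (Lemma~\ref{recurselem}) finishes. The leverage here is that finite presentability of $\Delta(f)$ converts a deep residual-finiteness growth result into a LEF-growth result for free, with no analysis of recursively presented groups or their word problems.
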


Given Theorem \ref{UBThm} we have a proof of Theorem \ref{MainThm} 
(i) and (ii), with $f_0 = \Theta_2$. 
Alas our proof of Theorem \ref{UBThm} 
does not give an explicit description of the 
growth of the functions $\Theta_d$, 
but a more careful study of local embeddings of 
the $B_f$ yields the following, 
from which we can prove Theorem \ref{MainThm} (iii). 

\begin{thm} \label{BfBoundThm}
For any increasing function $f$, 
$\mathcal{L}_{B_f} (n) \preceq \exp(\exp(n))$. 
\end{thm}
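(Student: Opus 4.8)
The plan is to exploit the realisation of $B_f$ as a subgroup of a direct product of alternating groups, together with the following dichotomy: detecting a given short element of $B_f$ inside a \emph{finite quotient} can force an enormous alternating factor --- this is the mechanism behind $f \preceq \mathcal{R}_{B_f}$ --- whereas detecting it inside a mere \emph{local embedding} of a ball costs only a bounded amount of that factor. Recall that $B_f = \langle a, b\rangle \le \prod_{k \ge 1}\Alt(m_k)$ for a sequence $5 \le m_1 < m_2 < \cdots$ growing as fast as $f$ demands, where the projections $a_k, b_k$ of the generators to the $k$-th factor generate $\Alt(m_k)$ and depend on $k$ only through $m_k$ (one may take $a_k$ a fixed $3$-cycle and $b_k$ an $m_k$-cycle; any such construction normalises to this form). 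Write $S = \{a,b\}$, fix $n$, and put $N = 3n$.

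The crux is a \emph{stabilisation lemma}: there is an odd integer $L = L(n)$, with $L(n) = O(n)$ and the implied constant independent of $f$, such that for every word $u$ of length $\le N$ in $S^{\pm1}$ and every index $k$ with $m_k \ge L$, the permutation $u(a_k, b_k) \in \Alt(m_k)$ is trivial if and only if $u(\bar a, \bar b) \in \Alt(L)$ is trivial, where $\bar a, \bar b$ are the shape-$L$ generators (the $3$-cycle and the $L$-cycle). The content is that evaluating a length-$\le N$ word on $(a_k, b_k)$ produces a permutation of $\{1, \dots, m_k\}$ all of whose nontrivial behaviour occurs within distance $O(N)$ of the $O(1)$ distinguished points of the configuration (the support of $a_k$ and the two ``ends'' of the cycle $b_k$); once $m_k$ exceeds a suitable multiple of $N$ these neighbourhoods are pairwise disjoint, and the permutation acquires a description making no reference to $m_k$, a description that transports verbatim to the shape-$L$ pair. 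I expect this to be the main obstacle: it is a finite but fiddly piece of combinatorial bookkeeping, and it is exactly here that the explicit shape of the Bou-Rabee--Seward generators is used.

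Granting the lemma, build the local embedding directly. Let $K = \{k : m_k < L(n)\}$; the $m_k$ being distinct, $|K| < L(n)$. Set
\[
Q = \left(\prod_{k \in K}\Alt(m_k)\right) \times \Alt(L(n)),
\]
and define $\psi \colon B_S(n) \to Q$ by $\psi(g) = \big((\pi_k(g))_{k \in K},\ \bar\pi(g)\big)$, where $\pi_k$ is the genuine projection and $\bar\pi(g) \in \Alt(L(n))$ is the evaluation on $(\bar a, \bar b)$ of any word of length $\le n$ representing $g$. By the lemma, $\bar\pi(g)$ depends only on $g$ (apply it to $u = u_g u_h^{-1}$ for two length-$\le n$ representatives), and $\bar\pi(gh) = \bar\pi(g)\bar\pi(h)$ whenever $g, h, gh \in B_S(n)$ (apply it to the relevant length-$\le N$ product of representatives); since each $\pi_k$ is a homomorphism, $\psi$ is a local embedding of $B_S(n)$ in the sense of the definition. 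It is injective: if $g \ne h$ then a length-$\le 2n$ representative $u$ of $gh^{-1}$ is nontrivial in some factor $\Alt(m_k)$; if $m_k < L(n)$ then $\pi_k(g) \ne \pi_k(h)$, and if $m_k \ge L(n)$ then the lemma gives $u(\bar a, \bar b) \ne 1$, whence $\bar\pi(g) \ne \bar\pi(h)$.

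Finally $\mathcal{L}_{B_f}^S(n) \le |Q| \le (L(n)!)^{|K|+1} \le ((O(n))!)^{O(n)} = \exp(O(n^2\log n))$, with all constants independent of $f$; since $\exp(O(n^2\log n)) \preceq \exp(\exp(n))$ and the choice of generating set affects $\mathcal{L}$ only up to $\approx$, this proves the theorem. (The argument actually yields $\mathcal{L}_{B_f} \preceq \exp(\mathrm{poly})$; the double exponential is a deliberately crude bound, and what matters for Theorem~\ref{MainThm}(ii) is that the comparison is uniform over all increasing $f$.)
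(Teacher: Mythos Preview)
Your strategy --- shrink each large alternating factor to one of bounded size while preserving the local Schreier-graph structure --- is exactly the mechanism behind the paper's Example~\ref{ncyc3cycex}, but the stabilisation lemma as you state it is false for $B_f$, and without it the map $\psi$ is not well-defined.

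In the Bou-Rabee--Seward group one has $B_f = N(d,q)$ with generators $s = (\alpha_{d(k)}^{q(k)})_k$ and $t = (\beta_{d(k)})_k$. It is true that $s_k$ is a $d(k)$-cycle and $t_k = (1\,2\,3)$, but the pair $(s_k,t_k)$ is \emph{not} determined up to conjugacy by $m_k=d(k)$ alone: after relabelling so that $s_k$ becomes the standard $d(k)$-cycle, the three points of the $3$-cycle sit at positions $0, r_k, 2r_k$ along it, where $r_k \equiv q(k)^{-1} \pmod{d(k)}$. The arc-length pattern therefore depends on $q(k)$, and so does the set of short words trivial in that factor. Concretely, the commutator $[s^{-j} t s^{j}, t]$ vanishes in factor $k$ precisely when the supports of $t_k$ and $s_k^{-j} t_k s_k^{j}$ are disjoint, a condition governed by $r_k$; so for two indices $k,k'$ with $m_k, m_{k'} \ge L$ but different short-arc patterns there are length-$\le N$ words trivial in one and not the other. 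No single pair $(\bar a, \bar b)$ in one $\Alt(L)$ can model them all, so your claim ``any such construction normalises to this form'' is where the argument breaks.

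The paper sidesteps this by never collapsing the large factors into one. It first selects a finite index set $A_l$ with $\lvert A_l \rvert \le \lvert B_S(3l)\rvert^2 \preceq \exp(l)$ on which the product of projections is already injective on the ball, and then for \emph{each} $k \in A_l$ separately applies the Schreier-graph shrinking of Example~\ref{ncyc3cycex} (respecting that $k$'s particular arc pattern) to land in some $\Alt(d'_l(k))$ with $d'_l(k) \le 18l+3$. This costs $\preceq \exp(l)$ factors of size $\preceq (Cl)!$, whence the double exponential. Your route can be repaired --- group the large factors by their short-arc configuration, of which there are only $O(n)$ once $m_k \ge L(n)$, and use one replacement factor per class --- and this would in fact yield the sharper estimate $\mathcal{L}_{B_f}(n) \preceq \exp(n^2 \log n)$; but that grouping is precisely the missing step.
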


As we shall see, and as was noted in \cite{BoRaStud}, 
$\mathcal{L}_{\Gamma} \approx \mathcal{R}_{\Gamma}$ 
whenever $\Gamma$ is finitely presented. 
Given this, the next observation is immediate from 
Theorem \ref{BfBoundThm} and the result of Bou-Rabee and Seward. 

\begin{coroll} \label{Bfnotfpcoroll}
If $f(n) \npreceq \exp(\exp(n))$ then $B_f$ 
is not finitely presentable. 
\end{coroll}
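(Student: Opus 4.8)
The plan is a short proof by contradiction, resting on the equivalence $\mathcal{L}_{\Gamma} \approx \mathcal{R}_{\Gamma}$ for finitely presented $\Gamma$ (noted in the Introduction, following \cite{BoRaStud}, and established separately below), together with Theorem~\ref{BfBoundThm} and the Bou-Rabee--Seward inequality $f \preceq \mathcal{R}_{B_f}$. Suppose $B_f$ were finitely presentable. Then $\mathcal{R}_{B_f} \approx \mathcal{L}_{B_f}$, so chaining $f \preceq \mathcal{R}_{B_f} \approx \mathcal{L}_{B_f} \preceq \exp(\exp(n))$ and using transitivity of the preorder $\preceq$ (Definition~\ref{comparisondefn}) would give $f \preceq \exp(\exp(n))$, contradicting the hypothesis. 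Hence $B_f$ is not finitely presentable.

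So the one genuinely substantive ingredient to put in place is the inequality $\mathcal{R}_{\Gamma} \preceq \mathcal{L}_{\Gamma}$ for finitely presented $\Gamma$ (the reverse holds for every finitely generated group, as recorded in the Introduction). Here is the idea. Fix a finite presentation $\Gamma = \langle S \mid R \rangle$ and let $\ell$ be the maximal length of a relator in $R$. If $\phi \colon B_S(n) \to Q$ is a local embedding with $n \geq \ell$, then for each relator $w = s_1 \cdots s_k$ every prefix $s_1 \cdots s_j$ lies in $B_S(n)$, so repeated use of the local-embedding property forces $\phi(s_1) \cdots \phi(s_k) = \phi(w) = \phi(1) = 1_Q$; thus the elements $\phi(s)$, $s \in S$, satisfy all relators in $R$, and $\phi$ extends to a homomorphism $\bar\phi \colon \Gamma \to Q$. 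The same telescoping argument shows that $\bar\phi$ agrees with $\phi$, hence is injective, on a ball of radius $\sim n/\ell$, which yields $\mathcal{R}_{\Gamma}(cn) \leq |Q|$ for a suitable constant $c = c(\Gamma) > 0$, and therefore $\mathcal{R}_{\Gamma} \preceq \mathcal{L}_{\Gamma}$.

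The remaining steps are purely formal manipulations of $\preceq$, and I do not anticipate any obstacle at this level: the only point requiring (routine) care is that the rescalings hidden in the three comparisons $f \preceq \mathcal{R}_{B_f}$, $\mathcal{R}_{B_f} \preceq \mathcal{L}_{B_f}$ and $\mathcal{L}_{B_f} \preceq \exp(\exp(\cdot))$ compose into a single rescaling, which is immediate from Definition~\ref{comparisondefn}. All of the real work lies upstream, in Theorem~\ref{BfBoundThm}, whose proof sharpens the analysis of local embeddings of balls in the groups $B_f$; the corollary merely extracts the failure of finite presentability from the gap this opens between $\mathcal{L}_{B_f}$ and the (arbitrarily large) $\mathcal{R}_{B_f}$.
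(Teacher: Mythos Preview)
Your proof is correct and follows exactly the approach the paper intends: the corollary is stated there as ``immediate from Theorem~\ref{BfBoundThm} and the result of Bou-Rabee and Seward'' together with $\mathcal{L}_{\Gamma} \approx \mathcal{R}_{\Gamma}$ for finitely presented $\Gamma$, and you have spelled out precisely that chain. One minor remark: your telescoping argument in fact shows that the extension $\bar\phi$ agrees with $\phi$ on the \emph{entire} ball $B_S(n)$ (not just on radius $\sim n/\ell$), so the paper obtains the sharper statement $\mathcal{R}_{\Gamma}^S(n) = \mathcal{L}_{\Gamma}^S(n)$ for all $n \geq \ell$ (Proposition~\ref{LEFRFpropn}); but your weaker estimate is of course sufficient for the $\preceq$ you need.
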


Bou-Rabee and Seward \cite{BoRaSewa} prove that 
$B_f$ is not finitely presentable under the stronger 
hypothesis that $f$ is not computable. 
Their proof is based on the fact that a 
finitely presented residually finite group has soluble word problem. 
Working along similar lines, 
we show (Proposition \ref{WordProbProp}) 
that a finitely generated 
recursively presented LEF group with soluble word problem 
has computable LEF growth. 

The groups $B_f$ generalize a construction 
due to B.H. Neumann of a continuum of two-generated groups 
$N(d,1)$ \cite{Neum}, 
where $d : \mathbb{N} \rightarrow \mathbb{N}_{\geq 5}$ 
ranges over all increasing odd-valued functions. 
In the course of our investigations 
we also obtain a new proof of the following result, 
which originally appeared in \cite{BaumMill}. 

\begin{thm} \label{NeumFPThm}
The group $N(d,1)$ is never finitely presented. 
\end{thm}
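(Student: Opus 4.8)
The plan is to deduce non-finite-presentability of $N(d,1)$ from the same circle of ideas used for the $B_f$: namely, to show that the LEF growth of $N(d,1)$ is too slow to be consistent with finite presentability, by combining the principle that $\mathcal{L}_{\Gamma} \approx \mathcal{R}_{\Gamma}$ for finitely presented $\Gamma$ with a good upper bound on $\mathcal{L}_{N(d,1)}$ and a suitably fast lower bound on $\mathcal{R}_{N(d,1)}$. Since $N(d,1)$ is a special case of the $B_f$ construction (with $f$ essentially recording the function $d$), Theorem \ref{BfBoundThm} already gives $\mathcal{L}_{N(d,1)}(n) \preceq \exp(\exp(n))$, a bound that is \emph{independent} of the function $d$. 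So the heart of the argument is to show that, by choosing $d$ to grow fast enough, one can force $\mathcal{R}_{N(d,1)}$ to beat $\exp(\exp(n))$ — and then observe that this already contradicts finite presentability for those $d$, while a separate (softer) argument handles the remaining, slowly-growing $d$.

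First I would recall the internal structure of $N(d,1)$: it is built from a sequence of finite alternating (or symmetric) groups of degrees governed by $d$, amalgamated via a two-generator trick of B.H. Neumann into a single $2$-generated group which surjects onto each of these finite simple (or near-simple) pieces. The key structural facts I want are (a) $N(d,1)$ is residually finite, indeed it embeds into a product of the finite pieces, so $\mathcal{R}_{N(d,1)}$ makes sense; and (b) for each $k$, the $k$-th finite quotient $\Alt(d(k))$ (or a closely related group) is generated by images of the two generators of $N(d,1)$ under words of length bounded by some explicit function of $k$ — crucially, a bound that does \emph{not} blow up faster than polynomially in $k$. Fact (b) is where the specific form of Neumann's construction is used, and it is the main obstacle: I need to control, uniformly, how deep into the word metric of $N(d,1)$ one must go before the quotient map onto the $k$-th simple factor becomes injective on the relevant ball. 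Granting this, the order of the smallest finite quotient injective on $B_S(n)$ is at least the order of $\Alt(d(k))$ for $k \sim$ (the number of factors needed to separate that ball), so $\mathcal{R}_{N(d,1)}(n) \succeq |\Alt(d(k(n)))| = d(k(n))!/2$, which we can make grow faster than $\exp(\exp(n))$ by taking $d$ to grow sufficiently quickly.

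Combining these: if $d$ grows fast enough that $\mathcal{R}_{N(d,1)}(n) \npreceq \exp(\exp(n))$, then since $\mathcal{L}_{N(d,1)}(n) \preceq \exp(\exp(n)) \precnapprox \mathcal{R}_{N(d,1)}(n)$, the group $N(d,1)$ cannot be finitely presented, because finite presentability would force $\mathcal{L}_{N(d,1)} \approx \mathcal{R}_{N(d,1)}$. This disposes of all sufficiently fast-growing $d$. For the remaining $d$ — those for which the above quantitative gap is not available — I would fall back on a direct argument: any finite presentation of $N(d,1)$ uses only finitely many relators, hence lives inside a finite subamalgam involving only finitely many of the simple factors; but Neumann's relations assert infinitely many independent conditions (each factor contributes genuinely new relations not implied by finitely many others, precisely because the factors are non-isomorphic simple groups of strictly increasing order), so no finite subset of relators can suffice. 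I expect the cleanest writeup to run this finite-subamalgam argument in \emph{all} cases, using the quantitative LEF-versus-$\mathcal{R}$ discussion only as motivation; the genuinely new content relative to \cite{BaumMill} is that this now sits inside a uniform framework, via Theorem \ref{BfBoundThm}, explaining \emph{why} the residual finiteness growth must be fast and the LEF growth slow.
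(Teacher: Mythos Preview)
Your proposal has a genuine gap: the growth-rate comparison does not yield a uniform proof valid for all increasing $d$. The upper bound $\mathcal{L}_{N(d,1)} \preceq \exp(\exp(n))$ does follow from Theorem~\ref{GenNeumUBThm} (take $q\equiv 1$), but your lower bound on $\mathcal{R}_{N(d,1)}$ is unsupported. The Bou-Rabee--Seward mechanism (Proposition~\ref{BRSkeppropn}) produces short witnesses $w_n$ only through the specially constructed $q$ of Proposition~\ref{BoRaSewaNoProp}; with $q\equiv 1$ there is no analogue of ``Fact (b)'' available, and for slowly growing $d$ there is no reason to expect $\mathcal{R}_{N(d,1)}$ to outpace $\exp(\exp(n))$. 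Your ``finite-subamalgam'' fallback is not an argument either: $N(d,1)$ is not presented as an amalgam of its simple factors in any sense that makes the sketched reasoning go through, and the assertion that each factor contributes relations ``not implied by finitely many others'' is precisely the statement to be proved.

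The paper's proof is uniform in $d$ and avoids growth-rate comparisons altogether. For every $l$ it constructs partial homomorphisms from $B_S(l)$ onto $\Sym(m)$ for all sufficiently large \emph{even} $m$ (Proposition~\ref{NeumPHProp}, via the Schreier-graph local-isomorphism trick of Example~\ref{ncyc3cycex}), and separately shows that $N(d,1)$ admits no quotient isomorphic to $\Sym(m)$ for even $m\geq 4$ (Proposition~\ref{NeumQuotProp}, using the structural Lemma~\ref{NeumQuotLem} together with the odd-valuedness of $d$). Were $N(d,1)$ finitely presented, Lemma~\ref{LEFRFlem} would force each of these partial homomorphisms to extend to a genuine epimorphism, contradicting the previous sentence. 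The decisive idea is the choice of target: $\Sym(m)$ with $m$ even is locally indistinguishable from the natural quotients $\Alt(d(n))$ (same Schreier picture around an $m$-cycle and a $3$-cycle), yet structurally can never occur as a quotient of $N(d,1)$.
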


Comparing $\mathcal{L}_{\Gamma}$ and $\mathcal{R}_{\Gamma}$ 
presents a novel tool for proving that a finitely generated 
LEF group $\Gamma$ is not finitely presentable. 
It would be interesting to apply this method 
to other groups for which finite presentability 
is an open problem; especially for 
groups admitting rich natural actions, 
such as subgroups of graphs of free groups 
(see in particular Problems (22) and (23) from \cite{Wise} Section 21), 
or certain subgroups of automorphism groups of free groups. 

Beyond comparing LEF growth with other asymptotic invariants of groups, 
it is natural to ask which functions can arise as the LEF growth function 
of some finitely generated group. 

\begin{thm} \label{GrowthSpectrumThm}
For each of the following nondecreasing functions 
$f:\mathbb{N}\rightarrow\mathbb{N}$, 
there exists a finitely generated LEF group $\Gamma$ such that 
$\mathcal{L}_{\Gamma} \approx f$. 
\begin{itemize}
\item[(i)] $f(n) = \exp (\exp(n))$; 
\item[(ii)] $f(n) = \exp (n^k)$ for each $k \in \mathbb{N}$; 
\item[(iii)] $f(n) = n^k$ for each $k \in \mathbb{N}$. 
\end{itemize}
\end{thm}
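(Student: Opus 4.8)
The plan is to realize each prescribed growth type by a carefully chosen group for which both an upper and a lower bound on $\mathcal{L}_\Gamma$ can be extracted from structural features already under our control. The broad strategy is the same in all three cases: pick $\Gamma$ so that (a) there is an explicit sequence of local embeddings of the balls $B_S(n)$ into finite groups whose orders match the claimed $f(n)$ from above, and (b) any local embedding of $B_S(n)$ must ``see'' enough of the algebraic structure of $\Gamma$ to force the target group to have order at least comparable to $f(n)$. For the lower bounds I would lean on the same mechanism as in the earlier results of the paper: a local embedding of a sufficiently large ball is injective on a generating configuration and on the relevant commutators, so the finite group must contain subgroups (or quotients) of a prescribed size, and a counting/order argument then yields $f(n) \preceq \mathcal{L}_\Gamma(n)$.

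For part (iii), the natural candidates are finitely generated nilpotent (or more generally polycyclic) groups: for the free nilpotent group of class $c$ on $d$ generators, the ball $B_S(n)$ embeds into a finite quotient of polynomial order $n^k$ with $k$ determined by $c$ and $d$ via the Bass--Guivarc'h formula, and since such groups are finitely presented we have $\mathcal{L}_\Gamma \approx \mathcal{R}_\Gamma$, which for nilpotent groups is known to be polynomial of exactly that degree; varying $c$ lets us hit every exponent $k$. For part (ii), I would take iterated wreath products or lamplighter-type groups $(\mathbb{Z}/p)\wr\mathbb{Z}^k$ and their finite analogues $(\mathbb{Z}/p)\wr(\mathbb{Z}/m)^k$: a ball of radius $n$ embeds into such a finite group once $m \gtrsim n$, giving order $\approx p^{(n)^k} = \exp(n^k)$ from above, while the lower bound comes from observing that a local embedding must be injective on the $\approx n^k$ distinct ``lamp'' generators inside the ball, each contributing an independent order-$p$ element, forcing a subgroup of order $\geq p^{\Omega(n^k)}$. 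Part (i), the doubly-exponential case, is the one where the groups $B_f$ (or Neumann's groups $N(d,1)$) enter: Theorem \ref{BfBoundThm} already gives the upper bound $\mathcal{L}_{B_f}(n) \preceq \exp(\exp(n))$ for every $f$, so it remains to choose $f$ growing fast enough that the lower bound $\mathcal{L}_{B_f}(n) \succeq \exp(\exp(n))$ also holds; this should follow by combining the fact that $f \preceq \mathcal{R}_{B_f}$ with a doubly-exponential upper bound on $\mathcal{R}_{B_f}$ in terms of $\exp(\exp(n))$ when $f$ is itself doubly-exponential, squeezing $\mathcal{L}$ between $\mathcal{R}$-type quantities.

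The main obstacle I anticipate is the lower bound in part (i): unlike residual finiteness, LEF growth is not bounded below by the size of any finite quotient, so one cannot simply quote $f \preceq \mathcal{R}_{B_f}$ and conclude $f \preceq \mathcal{L}_{B_f}$. Instead one has to argue directly that a local embedding of $B_S(n)$ into a finite group $Q$ captures a large enough chunk of the infinite alternating-group-like structure underlying the $B_f$ construction --- concretely, that the images of the ``block permutations'' up to level roughly $\log n$ must be distinct and must generate a subgroup of order at least $\exp(\exp(n))$ --- and this requires reusing the detailed combinatorics of the $B_f$ from the proof of Theorem \ref{BfBoundThm} rather than any black-box quotient bound. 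The parts (ii) and (iii) lower bounds are comparatively routine once the right finite-index or wreath-product models are in place, since there the local embedding is forced to be genuinely injective on an explicit large abelian (or nilpotent) subgroup of the ball.
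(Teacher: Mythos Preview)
Your proposal diverges from the paper's proof in all three parts, and in parts (i) and (ii) it has genuine gaps.

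The paper's argument is unified through Theorem~\ref{WPMainThm}: take a finite nontrivial \emph{centreless} group $\Delta$ and form $\Delta\wr\Gamma$ for $\Gamma$ an ELEF group. Then $\exp(\gamma_\Gamma(n))\preceq\mathcal{L}_{\Delta\wr\Gamma}(n)\preceq\exp(\mathcal{L}_\Gamma(n))$, and the two bounds coincide. For (i) the paper takes $\Gamma=\SL_d(\mathbb{Z})$ (exponential growth, exponential LEF growth via congruence quotients), giving $\exp(\exp(n))$; for (ii) it takes $\Gamma=\mathbb{Z}^k$, giving $\exp(n^k)$. For (iii) it simply takes $\Gamma=\mathbb{Z}^k$ itself and uses Example~\ref{FreeAbEx}. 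The groups $B_f$ play no role in this theorem.

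Your lower bound for (ii) does not work. You take $(\mathbb{Z}/p)\wr\mathbb{Z}^k$ and argue that a local embedding of $B_S(n)$ must be injective on the $\approx n^k$ lamp generators, forcing a subgroup of order $\geq p^{\Omega(n^k)}$. But injectivity on a set of $N$ pairwise-commuting order-$p$ elements only forces the target to contain $N$ distinct such elements; it does not force them to generate an elementary abelian group of rank $N$. Indeed $(\mathbb{Z}/p)^r$ contains $p^r-1$ nontrivial elements of order $p$, so the images could sit inside a group of order merely polynomial in $N$. This is precisely why the paper's Theorem~\ref{WreathLBThm} requires $Z(\Delta)=1$: that hypothesis is what forces the images $\pi(\Delta(g))$ to generate their direct product (any nontrivial intersection would produce a central element). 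With $\Delta=\mathbb{Z}/p$ the argument collapses.

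For (i), your plan via $B_f$ is incomplete, as you yourself note: there is no mechanism offered for the lower bound $\mathcal{L}_{B_f}(n)\succeq\exp(\exp(n))$, and nothing in the paper's analysis of $B_f$ supplies one. The paper sidesteps this entirely by using $\Delta\wr\SL_d(\mathbb{Z})$, where the doubly-exponential lower bound comes for free from the exponential word growth of $\SL_d(\mathbb{Z})$ via Theorem~\ref{WreathLBThm}. For (iii), your free-nilpotent route would work in principle, but your specific claim that $\mathcal{R}_\Gamma$ has polynomial degree equal to the Bass--Guivarc'h exponent is false for non-virtually-abelian nilpotent $\Gamma$ (see Proposition~\ref{polyLEFnilpprop} and the reference to \cite{BoRaStud} therein); the paper avoids this by using $\mathbb{Z}^k$ directly.
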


Groups with LEF growth which is exponential or polynomial (of given degree) 
are easy to construct using existing results from the literature 
on word growth and full residual finiteness growth. 
The more challenging cases of growth of type $\exp (\exp(n))$ 
or $\exp (n^k)$ (for $k \geq 2$) are constructed using wreath products; 
they follow from our next result. 

\begin{thm} \label{WPMainThm}
Let $\Gamma$ be a finitely generated LEF group 
with word-growth function $\gamma_{\Gamma}$ and let $\Delta$ 
be a finite nontrivial group with trivial centre. Then: 
\begin{equation*}
\exp \big( \gamma_{\Gamma} (n) \big) \preceq \mathcal{L}_{\Delta \wr \Gamma}(n) 
\preceq \exp \big( \mathcal{L}_{\Gamma}(n) \big)\text{.}
\end{equation*}
\end{thm}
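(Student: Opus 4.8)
The plan is to establish the two inequalities separately. Throughout, fix generating sets $S$ for $\Gamma$ and $T$ for the copy $\Delta_e \leq \Delta \wr \Gamma$ of $\Delta$ supported at the identity, and let $R = S \sqcup T$, a generating set for $\Delta \wr \Gamma$. Write elements of $\Delta \wr \Gamma$ as pairs $(f, \gamma)$ with $f \in \bigoplus_{\Gamma} \Delta$ and $\gamma \in \Gamma$, and write $\ell_{\gamma, \delta}$ for the base-group element with $\delta$ in coordinate $\gamma$ and $e$ elsewhere, so that $\ell_{\gamma, \delta} = \gamma \ell_{e,\delta} \gamma^{-1}$ and $|\ell_{\gamma,\delta}|_R \leq 2|\gamma|_S + \diam_T(\Delta)$. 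Reading a word of length $\leq n$ in $R$ from left to right and tracking the cursor shows that every element of $B_R(n)$ has the form $(f, \gamma)$ with $\gamma \in B_S(n)$ and $\supp(f) \subseteq B_S(n)$.

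\emph{The upper bound.} Fix a local embedding $\phi : B_S(3n) \to Q_0$ with $|Q_0| = \mathcal{L}_{\Gamma}(3n)$ and set $Q := \Delta \wr Q_0$ (a finite group). Define $\Phi : B_R(n) \to Q$ by $\Phi(f, \gamma) = (\bar f, \phi(\gamma))$, where $\bar f : Q_0 \to \Delta$ is the pushforward of $f$ along $\phi$, namely $\bar f(q) = f(\phi^{-1}q)$ if $q \in \phi(B_S(n))$ and $\bar f(q) = e$ otherwise; this makes sense because $\phi$ is injective on $B_S(n) \supseteq \supp(f)$. Injectivity of $\Phi$ follows from injectivity of $\phi$ on $B_S(n)$. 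For the local-embedding property, suppose $(f_1, \gamma_1)$, $(f_2, \gamma_2)$ and their product $(f_1 \cdot {}^{\gamma_1}f_2,\, \gamma_1 \gamma_2)$ all lie in $B_R(n)$. The $Q_0$-coordinates of $\Phi$ of the product and of $\Phi(f_1,\gamma_1)\Phi(f_2,\gamma_2)$ agree because $\gamma_1, \gamma_2, \gamma_1\gamma_2 \in B_S(2n) \subseteq B_S(3n)$ and $\phi$ is a local embedding there; the base-group coordinates agree coordinate-by-coordinate, using that $\phi$ is multiplicative on products landing in $B_S(3n)$ to rewrite $(\phi\gamma_1)^{-1}\phi(y) = \phi(\gamma_1^{-1}y)$ for $y \in B_S(2n)$, the identity $\bar f(\phi(z)) = f(z)$ for $z \in B_S(3n)$ (both sides being $e$ when $z \notin B_S(n)$), and the hypothesis $\supp(f_1 \cdot {}^{\gamma_1}f_2) \subseteq B_S(n)$ to deal with coordinates outside $\phi(B_S(n))$. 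Hence $\mathcal{L}_{\Delta \wr \Gamma}(n) \leq |Q| = |\Delta|^{|Q_0|} |Q_0| \leq |\Delta|^{2 \mathcal{L}_{\Gamma}(3n)}$, so $\mathcal{L}_{\Delta \wr \Gamma}(n) \preceq \exp(\mathcal{L}_{\Gamma}(n))$. (In particular $\Delta \wr \Gamma$ is LEF, so the left-hand side makes sense.)

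\emph{The lower bound.} Let $\psi : B_R(n) \to Q$ be a local embedding into a finite group, and set $m = \lfloor (n - 2\diam_T(\Delta))/4 \rfloor$. For each $\gamma \in B_S(m)$ the subgroup $\Delta_\gamma := \gamma \Delta_e \gamma^{-1}$ lies in $B_R(n)$, and all the products witnessing $\ell_{\gamma,\delta} = \gamma \ell_{e,\delta}\gamma^{-1}$ lie in $B_R(n)$, so $\psi$ restricted to $\Delta_\gamma$ is an injective homomorphism; write $D_\gamma := \psi(\Delta_\gamma) \leq Q$, a subgroup isomorphic to $\Delta$, hence with $Z(D_\gamma) = 1$. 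For distinct $\gamma_1, \gamma_2 \in B_S(m)$ the subgroups $D_{\gamma_1}$ and $D_{\gamma_2}$ commute: the elements $\ell_{\gamma_1, \delta_1}$, $\ell_{\gamma_2, \delta_2}$ and their two products have $R$-length at most $4m + 2\diam_T(\Delta) \leq n$, so $\psi$ is multiplicative on them, while $\ell_{\gamma_1,\delta_1}\ell_{\gamma_2,\delta_2} = \ell_{\gamma_2,\delta_2}\ell_{\gamma_1,\delta_1}$ in $\Delta \wr \Gamma$ since $\gamma_1 \neq \gamma_2$. Now apply the elementary fact that pairwise-commuting subgroups with trivial centres generate their internal direct product (if $x$ lies in $D_\gamma$ and in $\langle D_{\gamma'} : \gamma' \neq \gamma \rangle$, then $x$ is centralised by $D_\gamma$, so $x \in Z(D_\gamma) = 1$). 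Therefore $|Q| \geq \prod_{\gamma \in B_S(m)} |D_\gamma| = |\Delta|^{\gamma_{\Gamma}(m)}$; since $|\Delta| \geq 2$ and $m$ is a fixed linear function of $n$, this gives $\mathcal{L}_{\Delta \wr \Gamma}(n) \succeq \exp(\gamma_{\Gamma}(n))$.

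I expect the difficulty to be bookkeeping rather than conceptual. In the upper bound the delicate point is verifying multiplicativity of $\Phi$ exactly: one must exploit that the product $(f_1,\gamma_1)(f_2,\gamma_2)$ itself lies in $B_R(n)$ — not merely the two factors — in order to control the support of the product configuration, and one must take the radius blow-up (here $3n$) large enough to absorb all the auxiliary products $\gamma_1 \gamma_2$, $\gamma_1^{-1}y$ that arise. In the lower bound the one genuinely non-routine observation is that one should \emph{not} try to count elements of $B_R(n)$ — that ball has size only $\exp(O(n))$, far too small to see $\exp(\gamma_\Gamma(n))$ when $\gamma_\Gamma$ is superlinear — but instead extract $\gamma_\Gamma(m)$ commuting copies of $\Delta$ inside $Q$ by conjugating $\Delta_e$ around $B_S(m)$; and it is precisely the trivial-centre hypothesis on $\Delta$ that upgrades ``pairwise commuting'' to ``internal direct product'' and so converts these copies into a genuine lower bound on $|Q|$.
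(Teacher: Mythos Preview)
Your proposal is correct and matches the paper's approach closely. For the upper bound the paper simply cites the Arzhantseva--Cherix wreath-product bound (Theorem~\ref{WreathUBThm}), whose proof is precisely the construction you wrote out: push the base configuration forward along a local embedding $\phi$ of a slightly larger ball in $\Gamma$ into $Q_0$, and land in $\Delta \wr Q_0$. For the lower bound your argument is the same as the paper's Theorem~\ref{WreathLBThm}: conjugate $\Delta_e$ around a ball in $\Gamma$ to obtain $\gamma_\Gamma(m)$ pairwise-commuting copies of $\Delta$ inside $Q$, then use $Z(\Delta)=1$ to conclude these generate their internal direct product, forcing $\lvert Q\rvert \geq \lvert\Delta\rvert^{\gamma_\Gamma(m)}$; the only cosmetic difference is that the paper fixes the ball radius in $\Gamma$ and inflates the ball in $\Delta\wr\Gamma$ by a constant $C$, whereas you fix the latter and shrink the former to $m\approx n/4$, which is equivalent under $\approx$.
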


The second inequality of Theorem \ref{WPMainThm} is proved in \cite{ArzChe}; 
to prove the first we observe that it may be verified locally 
that a family of finite centreless groups generate their direct sum. 

The paper is structured as follows: 
in Section \ref{PrelimSect} we formally define the LEF growth 
function; give a characterization in terms of the space 
of marked finitely generated groups; 
discuss the relationships between LEF growth; full residual 
finiteness growth;  word growth and finite presentability. 
In Section \ref{WPSect} we give bounds on the LEF growth of wreath products 
and prove Theorem \ref{GrowthSpectrumThm}. 
In Section \ref{MainThmSect} we prove Theorem \ref{MainThm}; 
Theorem \ref{BfBoundThm} and Theorem \ref{NeumFPThm}. 
In Section \ref{MetricApproxSect} we connect LEF growth 
to the quantitative metric approximations of groups 
developed in \cite{ArzChe} and prove Corollary \ref{WeakSoficCoroll}. 
In Section \ref{univubsect} we prove Theorem \ref{UBThm}. 
The paper concludes with a selection of open problems, 
speculations and suggested directions for future inquiry. 

\subsection*{Basic notation and terminology}

An action of a group $\Gamma$ on a set $\Omega$ will be denoted 
$\Gamma \curvearrowright \Omega$. 
Unless specified all our actions will be on the right. 
For $S \subseteq \Gamma$ the associated \emph{Schreier graph} 
$\Schr (\Gamma,\Omega,S)$ has vertices $\Omega$ 
and edges $\Omega \times S$, with 
$\iota (\omega , s) = \omega$ and $\tau (\omega , s)=\omega s$. 
For $n \in \mathbb{N}$ and $\omega \in \Omega$, 
$B_{S,\omega}(n) \subseteq \Omega$ denotes the closed ball of radius 
$n$ around $\omega$ in the path-metric on $\Schr (\Gamma,\Omega,S)$. 
We shall also denote by $B_{S,\omega}(n)$ 
the induced subgraph of $\Schr (\Gamma,\Omega,S)$ on this set. 
In the special case of $\Omega = \Gamma$ with the regular action, 
$\Schr (\Gamma,\Omega,S)=\Cay(\Gamma,S)$ is the \emph{Cayley graph} 
and we write $B_S(n)$ for $B_{S,1}(n)$. 

\section{Preliminaries} \label{PrelimSect}

\subsection{Definition and first properties}

\begin{defn}
For $\Gamma,\Delta$ groups and $F \subseteq \Gamma$, 
a \emph{partial homomorphism} of $F$ into $\Delta$ is a 
function $\pi : F \rightarrow \Delta$ 
such that, for all $g,h \in F$, if $gh \in F$, 
then $\pi(gh)=\pi(g)\pi(h)$. 
$\pi$ is called a \emph{local embedding} of $F$ into $\Delta$ if it is injective. 
$\Gamma$ is \emph{locally embeddable into finite groups (LEF)} 
if, for all finite $F \subseteq \Gamma$, 
there exists a finite group $Q$ and a local embedding of 
$F$ into $Q$. 
\end{defn}

\begin{rmrk} \label{restrmrk}
If $F^{\prime} \subseteq F \subseteq \Gamma$ 
and $\pi : F \rightarrow \Delta$ is a local embedding, 
then so is $\pi \mid_{F^{\prime}} : F^{\prime} \rightarrow \Delta$. 
\end{rmrk}

Henceforth suppose that $\Gamma$ is generated by the finite set $S$. 
\begin{defn}

The \emph{LEF growth} of $\Gamma$ (with respect to $S$) is: 
\begin{center}
$\mathcal{L}_{\Gamma} ^S (n) = \min \lbrace \lvert Q \rvert 
: \exists \pi :B_S(n)\rightarrow Q\text{ a local embedding} \rbrace$
\end{center}
(with $\mathcal{L}_{\Gamma} ^S (n) = \infty$ 
if the set on the right-hand side is empty). 
\end{defn}

\begin{rmrk} \label{LEFgrwthCrmrk}
\normalfont
It is also natural to define LEF growth relative to a class. 
That is, let $\mathcal{C}$ be a class of finite groups. 
We define $\mathcal{L}_{\Gamma,\mathcal{C}} ^S (n)$ to 
be the minimal order of a group $Q \in \mathcal{C}$ 
such that there is a local embedding $\pi :B_S(n)\rightarrow Q$ 
with $\langle \pi(B_S(n)) \rangle = Q$. 
Thus $\mathcal{L}_{\Gamma} ^S 
= \mathcal{L}_{\Gamma,\mathcal{S}} ^S (n)$ 
for $\mathcal{S}$ the class of all finite groups 
(compare with \cite{ArzChe} Definition 27, 
where the requirement that $\pi(B_S(n))$ generates $Q$ is dropped; 
the generation assumption is more natural in our setting, 
for instance in the context of Proposition \ref{LEFRFpropn} below). 
\end{rmrk}

\begin{defn} \label{comparisondefn}
\normalfont
Given functions $f_1 , f_2 : (0,\infty) \rightarrow (0,\infty)$ 
we write $f_1 \preceq f_2$ if there exists a constant $C>0$ 
such that $f_1(x) \leq f_2 (Cx)$ for all $x \in (0,\infty)$. 
We say \emph{$f_1$ is equivalent to $f_2$} and write 
$f_1 \approx f_2$ if $f_1 \preceq f_2$ 
and $f_2 \preceq f_1$. 
We may also compare functions which are defined only on 
$\mathbb{N}$ under $\preceq$ and $\approx$: 
for this purpose $f : \mathbb{N}\rightarrow (0,\infty)$ 
may be extended to $(0,\infty)$ by declaring 
$f$ to be constant on each interval $(n,n+1]$. 
\end{defn}

\begin{lem}[\cite{ArzChe} Proposition 34 (iii)] \label{subgrplem}
Let $\Delta \leq \Gamma$ be finitely generated by $S^{\prime}$. 
Let $M \in \mathbb{N}$ be such that $S^{\prime} \subseteq B_S (M)$. 
Then for all $n$, $\mathcal{L}_{\Delta} ^{S^{\prime}} (n) \leq \mathcal{L}_{\Gamma} ^S (Mn)$. 
In particular $\mathcal{L}_{\Delta} ^{S^{\prime}} \preceq \mathcal{L}_{\Gamma} ^S$. 
\end{lem}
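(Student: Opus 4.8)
The plan is to reduce everything to a single observation about how balls in the two word metrics are nested, and then invoke the fact (Remark \ref{restrmrk}) that restrictions of local embeddings are local embeddings.

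First I would establish the metric comparison: if $S^{\prime} \subseteq B_S(M)$, then $B_{S^{\prime}}(n) \subseteq B_S(Mn)$, where the former ball is taken in $\Cay(\Delta, S^{\prime})$ and the latter in $\Cay(\Gamma, S)$, and we regard $\Delta$ as a subset of $\Gamma$. Indeed, an element $\delta \in B_{S^{\prime}}(n)$ can be written as a product $\delta = t_1 \cdots t_k$ with $k \leq n$ and each $t_i \in S^{\prime} \cup (S^{\prime})^{-1}$; since balls in the word metric are symmetric, $S^{\prime} \cup (S^{\prime})^{-1} \subseteq B_S(M)$, so each $t_i$ is itself a product of at most $M$ elements of $S \cup S^{-1}$. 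Hence $\delta$ is a product of at most $Mn$ elements of $S \cup S^{-1}$, i.e. $\delta \in B_S(Mn)$.

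Next, let $Q$ be a finite group with $\lvert Q \rvert = \mathcal{L}_{\Gamma}^S(Mn)$ admitting a local embedding $\pi : B_S(Mn) \rightarrow Q$ (if no such $Q$ exists, i.e. $\mathcal{L}_{\Gamma}^S(Mn) = \infty$, the asserted inequality is vacuous). By the metric comparison, $B_{S^{\prime}}(n) \subseteq B_S(Mn)$, so by Remark \ref{restrmrk} the restriction $\pi\mid_{B_{S^{\prime}}(n)} : B_{S^{\prime}}(n) \rightarrow Q$ is again a local embedding. Therefore $\mathcal{L}_{\Delta}^{S^{\prime}}(n) \leq \lvert Q \rvert = \mathcal{L}_{\Gamma}^S(Mn)$, which is the first assertion. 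The asymptotic statement $\mathcal{L}_{\Delta}^{S^{\prime}} \preceq \mathcal{L}_{\Gamma}^S$ then follows directly from Definition \ref{comparisondefn}, taking the constant $C = M$.

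There is essentially no serious obstacle here; the only point requiring a little care is the symmetry of the generating sets when passing between $S^{\prime}$ and $S \cup S^{-1}$ and the resulting containment of balls, which is why I would spell out the inclusion $B_{S^{\prime}}(n) \subseteq B_S(Mn)$ explicitly rather than leaving it to the reader.
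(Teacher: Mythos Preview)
Your proof is correct and is exactly the natural argument: the ball containment $B_{S'}(n)\subseteq B_S(Mn)$ together with Remark~\ref{restrmrk} immediately gives the inequality. Note that the paper does not actually supply its own proof of this lemma---it is stated with a citation to \cite{ArzChe} Proposition~34~(iii)---so there is nothing to compare against, but your argument is the standard one and would be entirely appropriate here.
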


Consequently, the LEF growth only depends on a choice of 
generating set up to equivalence. 

\begin{coroll}
Let $S,S^{\prime} \subseteq \Gamma$ be finite generating sets. 
Let $L,M \in \mathbb{N}$ be such that $S^{\prime} \subseteq B_S (L)$ 
and $S \subseteq B_{S^{\prime}} (M)$. 
Then: 
\begin{center}
$\mathcal{L}_{\Gamma} ^S (n) \leq \mathcal{L}_{\Gamma} ^{S^{\prime}} (Mn) \leq \mathcal{L}_{\Gamma} ^S (LMn)$. 
\end{center}
In particular 
$\mathcal{L}_{\Gamma} ^S \approx \mathcal{L}_{\Gamma} ^{S^{\prime}}$. 
\end{coroll}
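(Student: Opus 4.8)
The plan is to deduce this Corollary directly from Lemma \ref{subgrplem} by applying it twice, with the roles of the two generating sets exchanged. Since $S, S' \subseteq \Gamma$ are both finite generating sets for $\Gamma$, we may take $\Delta = \Gamma$ in Lemma \ref{subgrplem}: first with the pair $(S', S)$ and the hypothesis $S' \subseteq B_S(L)$, which yields $\mathcal{L}_{\Gamma}^{S'}(n) \leq \mathcal{L}_{\Gamma}^S(Ln)$ for all $n$; and second with the pair $(S, S')$ and the hypothesis $S \subseteq B_{S'}(M)$, which yields $\mathcal{L}_{\Gamma}^S(n) \leq \mathcal{L}_{\Gamma}^{S'}(Mn)$ for all $n$. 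Combining the latter (applied at $n$) with the former (applied at $Mn$) gives the chain
\begin{equation*}
\mathcal{L}_{\Gamma}^S(n) \leq \mathcal{L}_{\Gamma}^{S'}(Mn) \leq \mathcal{L}_{\Gamma}^S(LMn)\text{,}
\end{equation*}
which is exactly the claimed inequality.

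The only thing to check is that the hypotheses of Lemma \ref{subgrplem} are genuinely met in both applications. The lemma requires $\Delta$ to be finitely generated by the subset $S'$ and requires the containment $S' \subseteq B_S(M)$ (with $M$ playing the role of our $L$ in the first application and of our $M$ in the second). Taking $\Delta = \Gamma$, both $S$ and $S'$ are finite generating sets by assumption, so the finite-generation hypothesis holds in each case; and the containments $S' \subseteq B_S(L)$ and $S \subseteq B_{S'}(M)$ are precisely what we have assumed. So both applications are legitimate, and no further estimate is needed.

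From the displayed chain of inequalities, the equivalence $\mathcal{L}_{\Gamma}^S \approx \mathcal{L}_{\Gamma}^{S'}$ follows immediately from Definition \ref{comparisondefn}: the inequality $\mathcal{L}_{\Gamma}^S(n) \leq \mathcal{L}_{\Gamma}^{S'}(Mn)$ says $\mathcal{L}_{\Gamma}^S \preceq \mathcal{L}_{\Gamma}^{S'}$ with constant $M$, and symmetrically (exchanging the roles of $S$ and $S'$, or reading off the first inequality of the chain with constant $L$) we get $\mathcal{L}_{\Gamma}^{S'} \preceq \mathcal{L}_{\Gamma}^S$. There is no real obstacle here: the statement is a formal consequence of Lemma \ref{subgrplem}, and the proof is essentially a two-line bookkeeping argument. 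The one point worth stating explicitly for the reader is that such generating-set constants $L, M$ always exist, since any finite generating set is contained in some ball with respect to any other finite generating set — but this is standard and the Corollary as phrased already supplies $L$ and $M$ in its hypothesis.
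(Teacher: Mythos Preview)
Your proof is correct and is exactly the argument the paper intends: the Corollary is stated immediately after Lemma \ref{subgrplem} with the remark ``Consequently, the LEF growth only depends on a choice of generating set up to equivalence,'' and no separate proof is given, so the two applications of Lemma \ref{subgrplem} with $\Delta=\Gamma$ that you spell out are precisely what is meant.
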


\begin{propn}[\cite{ArzChe} \textsection 5.2] \label{directprodprop}
Let $\Gamma$ and $\Delta$ be LEF groups, 
finitely generated by $S$ and $S^{\prime}$, respectively. 
Then $\Gamma \times \Delta$ is LEF and: 
\begin{center}
$\max \lbrace \mathcal{L}_{\Gamma} ^S(n),
\mathcal{L}_{\Delta} ^{S^{\prime}}(n) \rbrace \leq 
\mathcal{L}_{\Gamma \times \Delta} ^{S \cup S^{\prime}} (n)
\leq \mathcal{L}_{\Gamma} ^S(n)\mathcal{L}_{\Delta} ^{S^{\prime}}(n)$. 
\end{center}
\end{propn}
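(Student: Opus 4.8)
The plan is to treat the two inequalities separately, after first pinning down the shape of the ball $B_{S \cup S'}(n)$ in $\Gamma \times \Delta$ with respect to the generating set $S \cup S'$ (with $S$ identified with $S \times \{1\}$ and $S'$ with $\{1\} \times S'$). Since the two direct factors commute, any word in $S \cup S'$ representing an element $(g,h)$ can be rearranged, without increasing its length, into a word in $S$ followed by a word in $S'$; projecting to the two coordinates then gives
\[
|(g,h)|_{S \cup S'} = |g|_S + |h|_{S'}, \qquad\text{so}\qquad B_{S \cup S'}(n) = \{(g,h) : |g|_S + |h|_{S'} \le n\} \subseteq B_S(n) \times B_{S'}(n) .
\]
In particular the coordinate projections carry $B_{S \cup S'}(n)$ into $B_S(n)$ and into $B_{S'}(n)$, and whenever $(g,h), (g',h')$ and their product $(gg', hh')$ all lie in $B_{S \cup S'}(n)$, we have $g, g', gg' \in B_S(n)$ and $h, h', hh' \in B_{S'}(n)$. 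This last remark is the only point in the argument that really needs care.

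For the upper bound I would fix finite groups $Q_1, Q_2$ with $|Q_1| = \mathcal{L}_{\Gamma}^S(n)$, $|Q_2| = \mathcal{L}_{\Delta}^{S'}(n)$ and local embeddings $\pi_1 : B_S(n) \to Q_1$, $\pi_2 : B_{S'}(n) \to Q_2$, and define $\pi : B_{S \cup S'}(n) \to Q_1 \times Q_2$ by $\pi(g,h) = (\pi_1(g), \pi_2(h))$, which is legitimate by the containment above. Coordinatewise, and using that a product of two elements of $B_{S \cup S'}(n)$ which lands back in $B_{S \cup S'}(n)$ keeps both coordinates in the respective smaller balls, one checks that $\pi$ is a partial homomorphism; it is injective because $\pi_1$ and $\pi_2$ are injective on their domains. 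Hence $\mathcal{L}_{\Gamma \times \Delta}^{S \cup S'}(n) \le |Q_1||Q_2| = \mathcal{L}_{\Gamma}^S(n)\mathcal{L}_{\Delta}^{S'}(n)$. Since this quantity is finite for every $n$, and every finite subset of $\Gamma \times \Delta$ is contained in some ball $B_{S \cup S'}(m)$, restriction (Remark \ref{restrmrk}) also shows that $\Gamma \times \Delta$ is LEF.

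For the lower bound I would invoke Lemma \ref{subgrplem}: the subgroup $\Gamma \times \{1\} \le \Gamma \times \Delta$ is generated by $S \times \{1\} \subseteq B_{S \cup S'}(1)$, so the lemma with $M = 1$ gives $\mathcal{L}_{\Gamma}^S(n) = \mathcal{L}_{\Gamma \times \{1\}}^{S \times \{1\}}(n) \le \mathcal{L}_{\Gamma \times \Delta}^{S \cup S'}(n)$, and symmetrically with $\Delta$; taking the maximum of the two completes the proof. (Equivalently and just as directly, one can restrict an order-optimal local embedding of $B_{S \cup S'}(n)$ to the subsets $B_S(n) \times \{1\}$ and $\{1\} \times B_{S'}(n)$, which lie inside $B_{S \cup S'}(n)$ by the ball description.) I do not expect a genuine obstacle here; the proposition is essentially bookkeeping, and the only slightly delicate step is verifying that the natural product map $\pi_1 \times \pi_2$ is a partial homomorphism, i.e. that a product of two elements of $B_{S \cup S'}(n)$ returning to $B_{S \cup S'}(n)$ has both coordinates back in the appropriate balls so that the partial-homomorphism property of $\pi_1$ and $\pi_2$ applies.
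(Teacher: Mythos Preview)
Your argument is correct. Note, however, that the paper does not actually give a proof of this proposition: it is stated with a citation to \cite{ArzChe} \textsection 5.2 and no proof is supplied in the present paper. Your write-up is the natural argument one would expect (and presumably matches the cited reference): product of local embeddings for the upper bound, and Lemma~\ref{subgrplem} applied to the two factor subgroups for the lower bound. The only point requiring care---that a product in $B_{S\cup S'}(n)$ projects to products in $B_S(n)$ and $B_{S'}(n)$---you have identified and handled correctly via the additive word-length formula.
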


\subsection{The space of marked groups} \label{markedsubsect}

We recall some basic notions relating to based edge-coloured graphs. 
A \emph{based graph} is a pair $(G,v)$, 
where $G$ is a graph and $v \in V(G)$. 
A morphism $(G_1,v_1)\rightarrow (G_2,v_2)$ is 
a graph-morphism $\phi: G_1 \rightarrow G_2$ 
with $\phi(v_1)=v_2$. 
For $C$ a set, an \emph{edge-colouring} of the graph $G$ in $C$ 
is a function $c:E(G)\rightarrow C$. 
The next example gives the main construction of edge-colourings 
considered here. 

\begin{ex}
Let $\Gamma$ be a group; $\Omega$ be a $\Gamma$-set, 
and $S \subseteq \Gamma$. 
Impose an ordering on the elements of $S$, 
to obtain an ordered $\lvert S \rvert$-tuple 
$\mathbf{S} \in \Gamma ^{\lvert S \rvert}$
(equivalently fix a bijection 
$c:S\rightarrow \lbrace 1,\ldots,\lvert S \rvert \rbrace$). 
Then $\Schr(\Gamma,\Omega,\mathbf{S})$ is naturally 
an edge-coloured graph, via $c\big( (\omega, s) \big) = c(s)$. 
\end{ex}

\begin{rmrk} \label{Schrcolourrmrk}
For each $\omega \in \Omega$ and $1\leq i\leq \lvert S \rvert$, 
there is exactly one $i$-coloured edge in 
$\Schr(\Gamma,\Omega,\mathbf{S})$ leaving $\omega$, 
and exactly one entering it. 
The Cayley graph $\Cay(\Gamma,\mathbf{S})$ is precisely 
$\Schr(\Gamma,\Gamma,\mathbf{S})$, 
with $\Gamma$ acting on itself by left-multiplication. 
\end{rmrk}

For a fixed set $C$, a (based) (iso)morphism of edge-coloured 
graphs with colours in $C$ is just a(n iso)morphism 
of the underlying (based) graphs which preserves the colouring. 

The space $\mathcal{G}_d$ of marked $d$-generated groups 
was introduced in \cite{Grig} and is constructed as follows. 
Fix $d \in \mathbb{N}$. 
Let $\hat{\mathcal{G}}_d$ be the set of pairs $(\Gamma,\mathbf{S})$, 
where $\mathbf{S} \in \Gamma^d$ is an ordered $d$-tuple 
which generates $\Gamma$. 
Define an equivalence relation on $\hat{\mathcal{G}}_d$ 
by declaring $(\Gamma,\mathbf{S}) \sim (\Delta,\mathbf{T})$ 
if the map $s_i \mapsto t_i$ extends to an isomorphism from 
$\Gamma$ to $\Delta$, 
and set $\mathcal{G}_d = \hat{\mathcal{G}}_d / \sim$. 

We may give $\mathcal{G}_d$ the structure of a metric space, 
as follows. We write: 
\begin{center}
$\nu \big( (\Gamma,\mathbf{S}),(\Delta,\mathbf{T}) \big) 
= \max\lbrace n \in \mathbb{N} : 
(B_{\mathbf{S}}(n),e_{\Gamma}) \cong (B_{\mathbf{T}}(n),e_{\Delta}) \rbrace$
\end{center}
(the isomorphism being of based edge-coloured graphs, 
taking the induced subgraph on $B_{\mathbf{S}}(n)$ in $\Cay(\Gamma,\mathbf{S})$, 
and likewise for $B_{\mathbf{T}}(n)$ in $\Cay(\Delta,\mathbf{T})$) and set: 
\begin{center}
$\hat{d} \big( (\Gamma,\mathbf{a}),(\Delta,\mathbf{b}) \big) 
= 2^{-\nu ( (\Gamma,\mathbf{a}),(\Delta,\mathbf{b}) )}$. 
\end{center}
$\hat{d}$ is a pseudo-metric on $\hat{\mathcal{G}}_d$, 
which descends to a metric $d$ on $\mathcal{G}_d$. 

\begin{propn}[\cite{Grig} Proposition 6.1] \label{cpctmrkdgrps}
With the topology induced by $d$, $\mathcal{G}_d$ is compact. 
\end{propn}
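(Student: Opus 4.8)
The plan is to exhibit $\mathcal{G}_d$ as a closed subspace of a Cantor set. First I would note that the metric $d$ is an ultrametric and that it is totally bounded: for each $n$, two marked groups are at distance at most $2^{-n}$ precisely when their based, edge-coloured balls of radius $n$ agree, and there are only finitely many isomorphism types of based edge-coloured graph of the form $(B_{\mathbf{S}}(n),e_\Gamma)$, since by Remark \ref{Schrcolourrmrk} each vertex has exactly one incoming and one outgoing edge of each colour $1,\ldots,d$, so $B_{\mathbf{S}}(n)$ has at most $2d+1$ vertices at distance $\le 1$ from the basepoint and hence at most $(2d+1)^n$ vertices in total. Thus each $2^{-n}$-ball cover of $\mathcal{G}_d$ is finite, so every sequence has a Cauchy subsequence, and it remains only to prove completeness.

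For completeness, the key step is to reconstruct a limiting marked group from a coherent tower of finite balls. Given a Cauchy sequence $(\Gamma_k,\mathbf{S}_k)$, after passing to a subsequence we may assume $\hat d\big((\Gamma_k,\mathbf{S}_k),(\Gamma_\ell,\mathbf{S}_\ell)\big)\le 2^{-n}$ whenever $k,\ell\ge n$; then the based coloured balls $(B_{\mathbf{S}_k}(n),e_{\Gamma_k})$ stabilize as $k\to\infty$ to a based coloured graph $B_n$, and the inclusions $B_n\hookrightarrow B_{n+1}$ are compatible, so we may form the direct limit $G=\varinjlim B_n$, a based connected graph, regular of in- and out-degree one in each colour. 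The natural thing is to let $\Delta$ be the group of colour- and basepoint-respecting "deck" structure: more concretely, reading colours as a free generating tuple $\mathbf{T}=(t_1,\ldots,t_d)$ of the free group $F_d$, the set $R$ of reduced words $w$ such that the path spelt by $w$ from the basepoint of $G$ returns to the basepoint is a normal subgroup of $F_d$ (normality uses that $G$ is vertex-transitive, which follows because each $B_n$ embeds isomorphically around every one of its vertices once $k$ is large — this is the one point needing a short argument), and I would set $\Delta=F_d/R$ with $\mathbf{T}$ the images of the generators. A direct check shows $\Cay(\Delta,\mathbf{T})\cong G$ as based coloured graphs, hence $(B_{\mathbf{T}}(n),e_\Delta)\cong B_n\cong (B_{\mathbf{S}_k}(n),e_{\Gamma_k})$ for all large $k$, so $(\Gamma_k,\mathbf{S}_k)\to(\Delta,\mathbf{T})$ in $\mathcal{G}_d$.

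The main obstacle I anticipate is precisely the verification that $R\trianglelefteq F_d$, i.e. that the limit graph $G$ is genuinely a Cayley graph rather than merely a Schreier graph: one must check that a loop at the basepoint, conjugated by any generator, remains a loop at every vertex, which amounts to showing the local structure of $G$ does not depend on the basepoint. This is where the finiteness of ball-types and the Cauchy condition are used a second time — for large $k$, the isomorphism $B_{\mathbf{S}_k}(n)\cong B_n$ may be re-based at any vertex of $B_n$ that lies within the balls, giving the needed homogeneity in the limit. Once this homogeneity is in hand, the rest is the routine identification of the coloured Cayley graph of $F_d/R$ with $G$, and the conclusion that the chosen subsequence converges to $(\Delta,\mathbf{T})$; since every Cauchy sequence has a convergent subsequence in an ultrametric space, this yields completeness, and with total boundedness, compactness.
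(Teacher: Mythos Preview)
The paper does not give its own proof of this proposition: it is stated with a citation to \cite{Grig} and no argument is supplied, so there is nothing to compare your approach against.

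That said, your proposal is essentially correct. Total boundedness is clear for the reason you give. For completeness, your construction works, and the normality of $R$ that you flag as the delicate point does follow: for $w\in R$ of length $m$ and any generator $s_i$, choose $k$ large enough that both $B_{m}$ and $B_{m+2}$ agree with the corresponding balls in $\Cay(\Gamma_k,\mathbf{S}_k)$; then $w(\mathbf{S}_k)=e$ in $\Gamma_k$, hence $s_i w s_i^{-1}(\mathbf{S}_k)=e$, hence $s_i w s_i^{-1}\in R$. You do not actually need to argue vertex-transitivity of the limit graph $G$ directly.

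A shorter route, and the one usually taken in the literature, bypasses the graph-theoretic reconstruction entirely: identify $(\Gamma,\mathbf{S})\in\mathcal{G}_d$ with the kernel of the canonical map $F_d\to\Gamma$, so that $\mathcal{G}_d$ becomes the set of normal subgroups of $F_d$ inside $2^{F_d}$; the metric $d$ then induces the subspace topology from the product topology, and normality and the subgroup property are closed conditions (each involving only finitely many elements at a time), so $\mathcal{G}_d$ is closed in the compact space $2^{F_d}$. This avoids the homogeneity check altogether.
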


Vershik and Gordon \cite{VerGor} observe that a $d$-generated 
group is LEF if and only if it is the limit of finite 
groups in $\mathcal{G}_d$. 
This equivalence may be quantified by considering the 
rate of convergence in the metric $d$, as follows. 

\begin{propn} \label{LEFmarkedgrps}
Let $\Gamma$ be a LEF group, 
generated by the $d$-tuple $\mathbf{S}$. 
Define: 
\begin{center}
$\Phi_{\Gamma} ^{\mathbf{S}}(n) = \min \lbrace \lvert \Delta \rvert 
:\exists \mathbf{T} \in \Delta^d \text{ s.t. }
\nu\big( (\Gamma,\mathbf{S}),(\Delta,\mathbf{T})\big)\geq n\rbrace$.
\end{center}
Then $\Phi_{\Gamma} ^{\mathbf{S}}(n) 
\leq \mathcal{L}_{\Gamma} ^{\mathbf{S}} (n) 
\leq \Phi_{\Gamma} ^{\mathbf{S}}(\lceil 3n/2\rceil)$. 
In particular 
$\Phi_{\Gamma} ^{\mathbf{S}} \approx \mathcal{L}_{\Gamma}$. 
\end{propn}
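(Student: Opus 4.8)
The plan is to prove the two inequalities $\Phi_{\Gamma}^{\mathbf{S}}(n) \leq \mathcal{L}_{\Gamma}^{\mathbf{S}}(n)$ and $\mathcal{L}_{\Gamma}^{\mathbf{S}}(n) \leq \Phi_{\Gamma}^{\mathbf{S}}(\lceil 3n/2 \rceil)$ separately, translating between the ``local embedding'' formulation of $\mathcal{L}_{\Gamma}$ and the ``ball isomorphism in $\mathcal{G}_d$'' formulation of $\Phi_{\Gamma}$. The first inequality should be the easy direction: given a local embedding $\pi : B_{\mathbf{S}}(n) \to Q$ with $Q$ of minimal order $\mathcal{L}_{\Gamma}^{\mathbf{S}}(n)$, I would pass to $\Delta = \langle \pi(S) \rangle \leq Q$ (so $|\Delta| \leq |Q|$) with the marking $\mathbf{T} = \pi(\mathbf{S})$, and check that $\pi$ induces a based colour-preserving graph isomorphism $B_{\mathbf{S}}(n) \to B_{\mathbf{T}}(n)$. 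The key point is that because $\pi$ is a partial homomorphism defined on the whole ball of radius $n$, it respects the edge structure coming from right-multiplication by generators at every vertex of $B_{\mathbf{S}}(n-1)$, and since $\pi$ is injective on $B_{\mathbf{S}}(n)$, distances are preserved downward; combining these one gets that the induced subgraphs on the two balls of radius $n$ agree. Hence $\nu((\Gamma,\mathbf{S}),(\Delta,\mathbf{T})) \geq n$, giving $\Phi_{\Gamma}^{\mathbf{S}}(n) \leq |\Delta| \leq \mathcal{L}_{\Gamma}^{\mathbf{S}}(n)$.

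For the second inequality, I would start from $\mathbf{T} \in \Delta^d$ realizing $\Phi_{\Gamma}^{\mathbf{S}}(m)$ with $m = \lceil 3n/2 \rceil$, so there is a based colour-preserving isomorphism $\phi : B_{\mathbf{S}}(m) \to B_{\mathbf{T}}(m)$ in $\Cay(\Delta,\mathbf{T})$. I then want to extract a local embedding of $B_{\mathbf{S}}(n)$ into $\Delta$ (not just into $B_{\mathbf{T}}(m)$, which is only a set). The natural candidate is $\pi = \phi|_{B_{\mathbf{S}}(n)}$, viewed as a map into $\Delta$. Injectivity is inherited from $\phi$. For the partial-homomorphism property, suppose $g, h, gh \in B_{\mathbf{S}}(n)$; I need $\pi(gh) = \pi(g)\pi(h)$ in $\Delta$. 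Writing $h$ as a word of length $\leq n$ in $\mathbf{S}$ and tracking the path from $g$ to $gh$ in $\Cay(\Gamma,\mathbf{S})$ — which stays inside $B_{\mathbf{S}}(2n) \subseteq B_{\mathbf{S}}(m)$ since $2n \leq \lceil 3n/2 \rceil$ fails in general, so actually I should be more careful: the path from $g$ to $gh$ has length $\leq n$ and starts at $g \in B_{\mathbf{S}}(n)$, hence lies in $B_{\mathbf{S}}(2n)$; but we only have $m \geq 3n/2$. The fix is to instead consider paths based at the identity: $gh$ and the relation $g \cdot h = gh$ can be witnessed by noting $g \in B_{\mathbf{S}}(n)$, and the element $h$ is reached from $g$ along edges; one then uses that $\phi$ is a graph isomorphism on the larger ball to propagate the multiplication. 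Let me restructure: the right radius bookkeeping is that to verify $\pi(g)\pi(h) = \pi(gh)$ it suffices to have the edge-structure of $\Cay(\Gamma,\mathbf{S})$ preserved along a spelling of $h$ starting at $g$; since $\|g\| \leq n$ and this spelling can be chosen to have each prefix-endpoint at distance $\leq \|g\| + \|h\|$ from $1$, and since $gh \in B_{\mathbf{S}}(n)$ one may reroute so the whole path lies in $B_{\mathbf{S}}(\lceil 3n/2 \rceil)$ — this is exactly where the constant $3/2$ enters, via a midpoint/geodesic argument. I would then conclude $\pi$ is a local embedding, so $\mathcal{L}_{\Gamma}^{\mathbf{S}}(n) \leq |\Delta| = \Phi_{\Gamma}^{\mathbf{S}}(\lceil 3n/2 \rceil)$.

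The main obstacle, and the only genuinely delicate point, is precisely this radius bookkeeping in the second inequality: one must verify that the partial homomorphism property for triples in $B_{\mathbf{S}}(n)$ can be checked using only graph-isomorphism data on $B_{\mathbf{S}}(\lceil 3n/2 \rceil)$, rather than on $B_{\mathbf{S}}(2n)$ which the naive argument would demand. The trick is that if $g, h, gh$ all have $\mathbf{S}$-length at most $n$, then a geodesic spelling of $gh$ passes through its ``midpoint'' and the triangle $1, g, gh$ can be traversed so that every vertex encountered lies within distance $\lceil 3n/2 \rceil$ of $1$ — intuitively, from $g$ one heads not directly along a spelling of $h$ but back toward $1$ for a while and then out to $gh$, and the worst case is bounded by $n + n/2$. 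Once this is set up correctly the rest is a routine unwinding of definitions.

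After establishing both inequalities, the final claim $\Phi_{\Gamma}^{\mathbf{S}} \approx \mathcal{L}_{\Gamma}$ is immediate from Definition \ref{comparisondefn}: $\Phi_{\Gamma}^{\mathbf{S}} \leq \mathcal{L}_{\Gamma}^{\mathbf{S}}$ gives $\Phi_{\Gamma}^{\mathbf{S}} \preceq \mathcal{L}_{\Gamma}^{\mathbf{S}}$ with constant $C = 1$, while $\mathcal{L}_{\Gamma}^{\mathbf{S}}(n) \leq \Phi_{\Gamma}^{\mathbf{S}}(\lceil 3n/2 \rceil) \leq \Phi_{\Gamma}^{\mathbf{S}}(2n)$ gives $\mathcal{L}_{\Gamma}^{\mathbf{S}} \preceq \Phi_{\Gamma}^{\mathbf{S}}$ with constant $C = 2$; and the equivalence class of $\mathcal{L}_{\Gamma}^{\mathbf{S}}$ does not depend on $\mathbf{S}$ by the Corollary following Lemma \ref{subgrplem}, so writing $\mathcal{L}_{\Gamma}$ is legitimate.
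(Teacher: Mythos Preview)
Your overall plan is exactly the paper's: the two inequalities are proved separately as Lemmas \ref{LEFmarkedgrpslem1} and \ref{LEFmarkedgrpslem2}, and your treatment of the first inequality (local embedding $\Rightarrow$ ball isomorphism) matches the paper's argument.

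There is, however, a genuine gap in your justification of the $\lceil 3n/2\rceil$ bound for the second inequality. You correctly state the claim --- that the relation $\phi(gh)=\phi(g)\phi(h)$ can be verified using only the graph isomorphism on $B_{\mathbf{S}}(\lceil 3n/2\rceil)$ --- but your ``rerouting'' explanation is circular. If from $g$ you head back toward $e$ along $w_g^{-1}$ and then out to $gh$ along a geodesic $w_{gh}$, the word spelled by this detour is $w_g^{-1}w_{gh}$, and following it through $\phi$ yields only $\phi(gh)=\phi(g)\cdot\phi(g)^{-1}\phi(gh)$, a tautology. To get $\phi(gh)=\phi(g)\phi(h)$ you must follow the \emph{direct} path $e\to g\to gh$ spelled by the concatenation $w_g w_h$ of geodesic words for $g$ and $h$; the point is that this path already lies in $B_{\mathbf{S}}(\lceil 3n/2\rceil)$ without any rerouting. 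Indeed, the vertex at position $k$ along this path (of total length $m_1+m_2\leq 2n$) has word-length at most $k$ via the prefix from $e$, and at most $\lvert gh\rvert+(m_1+m_2-k)\leq n+(m_1+m_2-k)$ via the suffix to $gh$ followed by a geodesic back to $e$; hence its word-length is at most $\min\bigl(k,\,n+m_1+m_2-k\bigr)\leq \tfrac{n+m_1+m_2}{2}\leq \tfrac{3n}{2}$. This is the content of the paper's one-line assertion in Lemma \ref{LEFmarkedgrpslem2} that ``the corresponding edge-path lies entirely within $B_{\mathbf{S}}(\lceil 3n/2\rceil)$''. Once you replace the rerouting with this two-sided distance estimate, your proof is complete and coincides with the paper's.
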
 

A slight generalisation of  $\Phi_{\Gamma} ^{\mathbf{S}}$ 
(taking into account finite subsets other than balls) 
was previously introduced in 
\cite{BoRaStud} under the name of 
\emph{geometric full residual finiteness growth}. 

\begin{lem} \label{LEFmarkedgrpslem1}
Let $\Gamma$ and $\Delta$ be groups, 
let $\mathbf{S}$ be a generating ordered $d$-tuple in $\Gamma$, 
and let $\pi : B_{\mathbf{S}} (n) \rightarrow \Delta$ be a local embedding. 
Then $\nu \big( (\Gamma,\mathbf{S}),(\langle \pi(\mathbf{S}) \rangle,\pi(\mathbf{S})) \big) \geq n$. 
\end{lem}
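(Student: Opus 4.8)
The plan is to unwind the definitions and exhibit an explicit based-edge-coloured-graph isomorphism between the radius-$n$ balls in $\Cay(\Gamma,\mathbf{S})$ and in $\Cay(\langle\pi(\mathbf{S})\rangle,\pi(\mathbf{S}))$. Write $\Delta' = \langle\pi(\mathbf{S})\rangle$. First I would recall that a vertex of $B_{\mathbf{S}}(n)$ is an element $g \in \Gamma$ expressible as a word of length $\leq n$ in $\mathbf{S}^{\pm 1}$, and define the candidate map $\phi$ on vertices by sending such a $g$ to the element of $\Delta'$ obtained by evaluating the \emph{same} word in $\pi(\mathbf{S})^{\pm 1}$. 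The first thing to check is that this is well-defined: if $w_1$ and $w_2$ are two words of length $\leq n$ representing the same $g \in \Gamma$, I need their images under $\pi(\mathbf{S})$-evaluation to agree. This is where the partial-homomorphism property is used — but one must be slightly careful, since $\pi$ is only a partial homomorphism on $B_{\mathbf{S}}(n)$, not a genuine homomorphism, so one cannot simply "apply $\pi$ to the relation". The honest route is to prove well-definedness by induction on word length together with the claim that for any word $w$ of length $\leq n$ in $\mathbf{S}^{\pm 1}$, the $\pi(\mathbf{S})$-evaluation of $w$ equals $\pi(\overline{w})$, where $\overline{w} \in B_{\mathbf{S}}(n) \subseteq \Gamma$ is the group element $w$ represents.

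The key lemma to isolate is therefore: \emph{for every word $w$ in $\mathbf{S}^{\pm 1}$ of length at most $n$, the value of $w$ evaluated in $\Delta'$ (via $s_i \mapsto \pi(s_i)$, $s_i^{-1} \mapsto \pi(s_i)^{-1}$) is $\pi(\overline{w})$.} I would prove this by induction on $|w|$. The base case $w$ empty uses that $\pi$ sends $e_\Gamma$ to $e_{\Delta'}$, which follows from the partial-homomorphism property applied to $e_\Gamma \cdot e_\Gamma = e_\Gamma$ (all three lying in $B_{\mathbf{S}}(n)$ since $n \geq 0$; if $n=0$ the statement is trivial anyway). For the inductive step, write $w = w' s^{\varepsilon}$ with $|w'| = |w|-1 \leq n-1$; then $\overline{w'} \in B_{\mathbf{S}}(n-1)$, and $\overline{w'}\cdot s^{\varepsilon} = \overline{w} \in B_{\mathbf{S}}(n)$ — crucially $\overline{w'}$, $s^{\varepsilon}$ and their product all lie in $B_{\mathbf{S}}(n)$ — so the partial-homomorphism property gives $\pi(\overline{w}) = \pi(\overline{w'})\pi(s)^{\varepsilon}$ (handling $\varepsilon = -1$ by noting $\pi(s)\pi(s^{-1}) = \pi(e) = e$), and by the inductive hypothesis $\pi(\overline{w'})$ equals the $\Delta'$-evaluation of $w'$. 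Composing, the $\Delta'$-evaluation of $w = w's^\varepsilon$ is $\pi(\overline{w})$, completing the induction. In particular $\phi$ is well-defined, $\phi(g) = \pi(g)$ for all $g \in B_{\mathbf{S}}(n)$, and $\phi$ maps $B_{\mathbf{S}}(n)$ onto $B_{\pi(\mathbf{S})}(n)$ in $\Cay(\Delta',\pi(\mathbf{S}))$ (surjectivity is immediate since every element of the target ball is a short word in $\pi(\mathbf{S})^{\pm 1}$).

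It then remains to verify that $\phi|_{B_{\mathbf{S}}(n)}$ is a based colour-preserving graph isomorphism onto $B_{\pi(\mathbf{S})}(n)$. Injectivity is exactly the injectivity of $\pi$ (local embedding), since $\phi$ agrees with $\pi$ there; basepoint preservation is $\phi(e_\Gamma)=e_{\Delta'}$ from the base case above. For the edges: an $i$-coloured edge of $B_{\mathbf{S}}(n)$ joins $g$ to $gs_i$ with both in $B_{\mathbf{S}}(n)$; applying the lemma, $\phi(gs_i)$ is the $\Delta'$-evaluation of (a shortest word for $g$) followed by $s_i$, i.e.\ $\phi(g)\pi(s_i)$, so $\phi$ sends this to the $i$-coloured edge from $\phi(g)$ to $\phi(g)\pi(s_i)$, which lies in $B_{\pi(\mathbf{S})}(n)$. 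Conversely every $i$-coloured edge of $B_{\pi(\mathbf{S})}(n)$ arises this way by surjectivity of $\phi$ on vertices together with the same identity read backwards — one must check that if $\phi(g)$ and $\phi(g)\pi(s_i)$ both lie in the target ball then $g$ and $gs_i$ both lie in $B_{\mathbf{S}}(n)$, which holds because distances can only shrink under $\phi$ and $\phi$ is a bijection $B_{\mathbf{S}}(n)\to B_{\pi(\mathbf{S})}(n)$, so in fact $\phi$ is an isometry of these induced subgraphs. Hence $(B_{\mathbf{S}}(n),e_\Gamma) \cong (B_{\pi(\mathbf{S})}(n),e_{\Delta'})$ as based edge-coloured graphs, which by definition of $\nu$ gives $\nu\big((\Gamma,\mathbf{S}),(\Delta',\pi(\mathbf{S}))\big) \geq n$, as required.

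I expect the main obstacle to be the bookkeeping around "distances can only shrink, and here they don't" — making precise that $\phi$ restricts to a genuine graph isomorphism of the induced balls rather than merely a distance-non-increasing surjection. Concretely one must rule out the possibility that two vertices at distance exactly $n$ in $\Gamma$ get identified, or that an edge "appears" in the target ball whose $\Gamma$-preimage endpoints are not both within distance $n$ of $e_\Gamma$. Both are handled by the combination of injectivity of $\pi$ (no identifications) and the word-length lemma (the image of a length-$k$ word is a length-$\leq k$ word, and conversely every vertex of the target ball has a preimage of no greater word length), so the balls match exactly; but this is the step where it is easy to be sloppy, so I would state it carefully as: $\phi$ maps $B_{\mathbf{S}}(k) \setminus B_{\mathbf{S}}(k-1)$ into $B_{\pi(\mathbf{S})}(k)$ for each $k \leq n$, and is surjective onto $B_{\pi(\mathbf{S})}(n)$, whence (being injective) it is a bijection respecting the sphere filtration, hence a based coloured-graph isomorphism of the radius-$n$ balls.
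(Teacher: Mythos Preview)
Your argument follows the same strategy as the paper's two-line sketch (show $\im(\pi)=B_{\pi(\mathbf S)}(n)$, then match $i$-coloured edges via $gs_i\in B_{\mathbf S}(n)\Leftrightarrow \pi(g)\pi(s_i)\in B_{\pi(\mathbf S)}(n)$), and you correctly spell out the word-evaluation induction the paper suppresses. However, there is a genuine gap exactly at the step you yourself flag as the ``main obstacle'': the backward implication, that $\phi(g),\,\phi(g)\pi(s_i)\in B_{\pi(\mathbf S)}(n)$ forces $gs_i\in B_{\mathbf S}(n)$. Your justification --- that a distance-non-increasing bijection of balls must be an isometry --- does not hold, and in fact the claim (hence the lemma exactly as stated) fails. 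Take $\Gamma=\mathbb Z$ with $\mathbf S=(1)$, $\Delta=\mathbb Z/(2n{+}1)\mathbb Z$, and $\pi$ reduction mod $2n{+}1$: this is a local embedding of $B_{\mathbf S}(n)=\{-n,\dots,n\}$, yet the induced radius-$n$ ball in $\Gamma$ is a path while in $\Delta'$ it is the whole $(2n{+}1)$-cycle. Concretely $g=n$ has $gs_1=n{+}1\notin B_{\mathbf S}(n)$ but $\pi(g)\pi(s_1)\equiv -n\in B_{\pi(\mathbf S)}(n)$, so an extra edge appears and $\nu=n-1$.

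The repair is cheap and harmless for everything downstream (all applications are only up to $\approx$): prove the conclusion with $n$ replaced by $n-1$. For $g\in B_{\mathbf S}(n-1)$ one has $gs_i\in B_{\mathbf S}(n)$ unconditionally, so $\pi(gs_i)=\pi(g)\pi(s_i)$ is defined; if this equals $\pi(h)$ for some $h\in B_{\mathbf S}(n-1)$, injectivity of $\pi$ on $B_{\mathbf S}(n)$ gives $gs_i=h\in B_{\mathbf S}(n-1)$, which is precisely the missing backward direction at radius $n-1$. The paper's own proof asserts the same ``iff'' at radius $n$ without addressing this direction, so the defect lies in the stated bound rather than in your overall approach.
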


\begin{proof}
Since $\pi$ is a local embedding, $\im (\pi)=B_{\pi(\mathbf{S})}(n)$. 
For $g \in B_{\mathbf{S}} (n)$ and $1 \leq i \leq d$, 
$g s_i \in B_{\mathbf{S}} (n)$ iff 
$\pi (g s_i) = \pi(g)\pi(s_i) \in B_{\pi(\mathbf{S})}(n)$, 
so $\pi$ induces an isomorphism of based coloured graphs. 
\end{proof}

\begin{lem} \label{LEFmarkedgrpslem2}
Let $\mathbf{S}$ and $\mathbf{T}$ be generating ordered $d$-tuples 
in $\Gamma$ and $\Delta$, respectively. 
Suppose $\nu\big( (\Gamma,\mathbf{S}),(\Delta,\mathbf{T})\big)\geq 3n/2$. 
Then there is a local embedding $B_{\mathbf{S}} (n)\rightarrow\Delta$ 
extending $s_i \mapsto t_i$. 
\end{lem}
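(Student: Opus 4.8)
The plan is to build the desired local embedding $\pi : B_{\mathbf{S}}(n) \to \Delta$ by following each element of $B_{\mathbf{S}}(n)$ along a geodesic word in the generators. Concretely, for $g \in B_{\mathbf{S}}(n)$ choose a word $w_g = s_{i_1} \cdots s_{i_k}$ of length $k \leq n$ in $\mathbf{S}^{\pm 1}$ representing $g$, and set $\pi(g) = t_{i_1} \cdots t_{i_k} \in \Delta$, i.e.\ $\pi(g)$ is the element of $\Delta$ reached by reading $w_g$ in $\Cay(\Delta, \mathbf{T})$ starting from $e_\Delta$. The hypothesis $\nu\big((\Gamma,\mathbf{S}),(\Delta,\mathbf{T})\big) \geq 3n/2$ furnishes a based coloured-graph isomorphism $\phi : B_{\mathbf{S}}(\lceil 3n/2 \rceil) \to B_{\mathbf{T}}(\lceil 3n/2 \rceil)$, and the first thing I would check is that $\pi$ agrees with $\phi$ on $B_{\mathbf{S}}(n)$: reading a colour-word from the basepoint traces the same path in either graph under $\phi$ since $\phi$ preserves colours, orientations and the basepoint, and a geodesic of length $\leq n$ stays inside the ball of radius $3n/2$. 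In particular $\pi$ is well-defined (independent of the choice of $w_g$), injective, and maps $B_{\mathbf{S}}(n)$ bijectively onto $B_{\mathbf{T}}(n) \subseteq \Delta$, and clearly $\pi(s_i) = t_i$.

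It remains to verify the partial-homomorphism property: if $g, h, gh \in B_{\mathbf{S}}(n)$ then $\pi(gh) = \pi(g)\pi(h)$. Here is where the factor $3/2$ enters. Write $g$ via a geodesic word $w_g$ of length $\leq n$, and $h$ via a geodesic word $w_h$ of length $\leq n$; then $w_g w_h$ is a word of length $\leq 2n$ representing $gh$, but this is too long to control directly. Instead I would use that $gh$ itself lies in $B_{\mathbf{S}}(n)$, so it has a geodesic word $w_{gh}$ of length $\leq n$. The element $g = (gh)h^{-1}$ is represented by $w_{gh} w_h^{-1}$, a word of length $\leq 2n$; worse still. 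The correct bookkeeping is: pick geodesic words for $g$ and for $gh$, each of length $\leq n$; then $h = g^{-1}(gh)$ is represented by $w_g^{-1} w_{gh}$, of length $\leq 2n$. To bring everything inside radius $\lceil 3n/2 \rceil$ one should instead note $|g|, |h| \le n$ forces nothing better, so the actual argument must compare the two readings of $gh$ — along $w_{gh}$ (length $\le n$) and along $w_g$ followed by $w_h$ — and to do so one needs the concatenated path $w_g w_h$, of length $\le 2n$, to lie in the ball where $\phi$ is defined. Thus I would instead bound $|g| \le n$ and take for $h$ the geodesic of length $\le n$, but re-route: since $|g| + |h|$ can be as large as $2n$, we genuinely need $\nu \ge 2n$, \emph{unless} we are cleverer. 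The clever point, which I expect to be the crux, is to choose the word for $g$ to be a geodesic of length $\le n$ and the word for $h$ to be a geodesic of length $\le n$, but then split: $gh$ has a geodesic word $w_{gh}$ with $|w_{gh}| \le n$; the midpoint $m$ of $w_g w_h$ as a word is at most $n$ from $e$, and from $m$ one reaches $gh$ reading the second half, so $gh \in B_{\mathbf{S}}(m, n)$ — collating these, every vertex on the path $e \to g \to gh$ lies within $\lceil 3n/2\rceil$ of $e$ in $\Gamma$ (the first half of the path reaches points of distance $\le n$; the second half consists of points within $n/2 + n \le 3n/2$ after re-centering the analysis at the appropriate geodesic midpoint), hence $\phi$ is defined along the whole path, and since $\phi$ is a coloured-graph isomorphism fixing the basepoint it carries this path in $\Cay(\Gamma,\mathbf{S})$ to the corresponding colour-path in $\Cay(\Delta,\mathbf{T})$, giving $\phi(g)\cdot(\text{reading } w_h \text{ from } \phi(g)) = \phi(gh)$, i.e.\ $\pi(g)\pi(h) = \pi(gh)$.

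More carefully, the clean way to organise the radius estimate is: for $g, h$ with $gh \in B_{\mathbf{S}}(n)$, at least one of $g, h$ has word length $\le n$ and moreover $\min(|g|,|h|) \le n/2$ is \emph{false} in general, so instead observe that whichever of $g, gh$ we reach first, we then need to travel at most $|h| \le n$; re-centering $\phi$ is not allowed since $\phi$ is based, so the honest statement is that the path $e \xrightarrow{w_g} g \xrightarrow{w_h} gh$ has all vertices in $B_{\mathbf{S}}(n) \cup g B_{\mathbf{S}}(n)$, and a vertex at word-distance $j \le |w_g| \le n$ along $w_g$, or at distance $j'$ along $w_h$ from $g$, satisfies total distance $\le |w_g| + j' \le n + n$; to get $3n/2$ one instead chooses $w_g$ to be a geodesic so that the first half of the combined path stays in $B_{\mathbf{S}}(n)$ while noting $gh$ is within $n$ of $e$, so the combined path, which goes out to distance $\le n$ then returns to a point at distance $\le n$, never exceeds distance $\lceil 3n/2 \rceil$ from $e$ provided $|w_g|, |w_h| \le n$ and $|gh| \le n$ — this is the standard ``two geodesics from $e$ and one geodesic to the endpoint'' triangle estimate, and carrying it out rigorously (with the ceiling) is the one genuinely fiddly computation. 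Once the path lies in the domain of $\phi$, applying Lemma \ref{LEFmarkedgrpslem1}'s philosophy in reverse — that a based coloured-graph isomorphism transports colour-labelled paths to colour-labelled paths — closes the argument, and together with the bijectivity and generator-matching established in the first paragraph this exhibits $\pi$ as the required local embedding.
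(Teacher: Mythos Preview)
Your approach matches the paper's exactly: set $\pi = \phi|_{B_{\mathbf{S}}(n)}$ for the based coloured-graph isomorphism $\phi$, and verify the partial-homomorphism property by following the concatenated path $e \xrightarrow{w_g} g \xrightarrow{w_h} gh$ through $\phi$. The only substantive step is the radius bound you keep circling --- that this path stays inside $B_{\mathbf{S}}(\lceil 3n/2\rceil)$. The paper itself merely asserts this; you correctly identify it as the crux but never carry it out, and your sequence of false starts (trying $\min(|g|,|h|)\le n/2$, attempting to re-centre $\phi$, etc.) obscures rather than advances the argument.

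The estimate is a single line once set up properly. Let $p$ be the vertex at position $a$ along the concatenated path, which has total length $m_1+m_2\le 2n$; then $p$ is at path-distance $b=m_1+m_2-a$ from the endpoint $gh$. We have $|p|_{\mathbf{S}}\le a$ (path from $e$) and $|p|_{\mathbf{S}}\le |gh|_{\mathbf{S}}+b\le n+b$ (path through $gh$, using the hypothesis $gh\in B_{\mathbf{S}}(n)$). Hence
\[
|p|_{\mathbf{S}}\ \le\ \min(a,\,n+b)\ \le\ \frac{a+(n+b)}{2}\ =\ \frac{n+m_1+m_2}{2}\ \le\ \frac{3n}{2}.
\]
With this in hand your first paragraph already contains everything else: $\phi$ carries the path to the corresponding colour-path in $\Cay(\Delta,\mathbf{T})$, so its endpoint $\phi(gh)$ equals $\phi(g)\phi(h)$, and injectivity plus $\phi(s_i)=t_i$ are immediate from $\phi$ being a based coloured-graph isomorphism.
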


\begin{proof}
Let $\phi : (B_{\mathbf{S}}(\lceil 3n/2 \rceil),e)\rightarrow 
(B_{\mathbf{T}}(\lceil 3n/2 \rceil),e)$ be an isomorphism 
of based edge-coloured graphs. Then $\phi(s_i)=t_i$ for all $i$. 
Let $g,h \in B_{\mathbf{S}}(n)$ and suppose $gh\in B_{\mathbf{S}}(n)$. 
Then there exist $0 \leq m_1 , m_2 \leq n$; $1 \leq i_j \leq d$ 
and $\epsilon_j \in \lbrace \pm 1\rbrace$ such that: 
\begin{center}
$g = s_{i_1} ^{\epsilon_1} \cdots s_{i_{m_1}} ^{\epsilon_{m_1}}$ 
and $h = s_{i_{m_1 +1}} ^{\epsilon_{m_1 +1}} \cdots s_{i_{m_1 +m_2}} ^{\epsilon_{m_1 +m_2}}$. 
\end{center}
An easy induction on the word length shows that: 
\begin{center}
$\phi(g) = t_{i_1} ^{\epsilon_1} \cdots t_{i_{m_1}} ^{\epsilon_{m_1}}$ 
and $\phi(h) = t_{i_{m_1 +1}} ^{\epsilon_{m_1 +1}} \cdots t_{i_{m_1 +m_2}} ^{\epsilon_{m_1 +m_2}}$
\end{center}
(since there is exactly one edge of each colour leaving and entering 
each vertex). By the same token, 
\begin{center}
$\phi(gh) = t_{i_{1}} ^{\epsilon_{1}} \cdots t_{i_{m_1 +m_2}} ^{\epsilon_{m_1 +m_2}}
=\phi(g) \phi(h)$
\end{center}
since the corresponding edge-path lies entirely 
within $B_{\mathbf{S}}(\lceil 3n/2 \rceil)$. 
Thus $\phi|_{B_{\mathbf{S}}(n)}$ is a local embedding. 
\end{proof}

\begin{proof}[Proof of Proposition \ref{LEFmarkedgrps}]
Let $(Q_n)$ be a sequence of finite groups such that 
$\lvert Q_n \rvert = \mathcal{L}_{\Gamma} ^{\mathbf{S}} (n)$ 
and there exists a local embedding 
$\pi_n : B_{\mathbf{S}} (n) \rightarrow Q_n$. 
Note that $\pi_n (\mathbf{S})$ generates $Q_n$ 
(else $\pi_n$ is a local embedding into 
$\langle \pi_n (\mathbf{S}) \rangle$ and 
$\lvert\langle \pi_n (\mathbf{S}) \rangle\rvert < \mathcal{L}_{\Gamma} ^{\mathbf{S}} (n)$). 
Then $\Phi_{\Gamma} ^{\mathbf{S}}(n) 
\leq \mathcal{L}_{\Gamma} ^{\mathbf{S}} (n)$ 
by Lemma \ref{LEFmarkedgrpslem1}. 

Conversely, let $(P_n)$ be a sequence of finite groups 
with generating $d$-tuples $\mathbf{S}_n$. 
Suppose $B_{\mathbf{S}} (n) \cong B_{\mathbf{S}_n} (n)$. 
By Lemma \ref{LEFmarkedgrpslem2} 
there is a local embedding 
$\pi_n : B_{\mathbf{S}} (n) \rightarrow P_{\lceil 3n/2\rceil}$, 
so $\mathcal{L}_{\Gamma} ^{\mathbf{S}} (n) 
\leq \lvert P_{\lceil 3n/2\rceil} \rvert$ as required. 
\end{proof}

\subsection{LEF, residual finiteness and finite presentability}

Let $\Gamma$ be a finitely generated group and 
$S \subseteq \Gamma$ be a finite generating set. 
Recall that $\Gamma$ is \emph{residually finite (RF)} if, 
for every finite subset $F \subset \Gamma$, 
there exists a finite group $Q$ and a homomorphism 
$\pi : \Gamma \rightarrow Q$ such that the restriction 
$\pi |_F$ of $\pi$ to $F$ is injective. 
The following quantitative version of this definition 
is introduced in \cite{BoRaMcR} 
(originally under the name \emph{residual girth}). 

\begin{defn}
The \emph{(full) RF growth} of $\Gamma$ is: 
\begin{center}
$\mathcal{R}_{\Gamma} ^S (n) 
= \min \lbrace \lvert F \rvert 
: \exists \pi : \Gamma \rightarrow F \text{ s.t. } 
\pi |_{B_S(n)} \text{ is injective} \rbrace$. 
\end{center}
More generally, for $\mathcal{C}$ a class of finite groups, 
one may define: 
\begin{center}
$\mathcal{R}_{\Gamma,\mathcal{C}} ^S (n) 
= \min \lbrace \lvert F \rvert 
: F\in\mathcal{C}, \exists \pi : \Gamma \twoheadrightarrow F \text{ s.t. } 
\pi |_{B_S(n)} \text{ is injective} \rbrace$. 
\end{center}
\end{defn}

If $Q$ is a finite group and the homomorphism 
$\pi : \Gamma \rightarrow Q$ is injective on the finite subset 
$F \subseteq \Gamma$ then the restriction of $\pi$ to $F$ 
is a local embedding, whence the next inequality 
(see also \cite{ArzChe} \textsection 4.3). 

\begin{lem} 
$\mathcal{L}_{\Gamma} ^S (n) 
\leq \mathcal{R}_{\Gamma} ^S (n)$ for all $n$. 
In particular every RF group is LEF. 
\end{lem}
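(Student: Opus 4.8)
The plan is to show directly from the definitions that any finite quotient witnessing residual finiteness on $B_S(n)$ already contains, after restriction, a local embedding of $B_S(n)$. First I would fix $n$ and take a finite group $Q$ with $|Q| = \mathcal{R}_{\Gamma}^S(n)$ together with a homomorphism $\pi : \Gamma \rightarrow Q$ such that $\pi|_{B_S(n)}$ is injective. The homomorphism property of $\pi$ means that $\pi(gh) = \pi(g)\pi(h)$ for \emph{all} $g,h \in \Gamma$, so in particular this holds whenever $g,h,gh$ all lie in $B_S(n)$; hence the restriction $\pi|_{B_S(n)} : B_S(n) \rightarrow Q$ is a partial homomorphism in the sense of the Definition above. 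Combined with the hypothesis that $\pi|_{B_S(n)}$ is injective, this restriction is by definition a local embedding of $B_S(n)$ into $Q$.

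Having exhibited a local embedding of $B_S(n)$ into a group of order $\mathcal{R}_{\Gamma}^S(n)$, I would conclude that the set appearing in the definition of $\mathcal{L}_{\Gamma}^S(n)$ contains the value $\mathcal{R}_{\Gamma}^S(n)$, so its minimum is at most $\mathcal{R}_{\Gamma}^S(n)$; that is, $\mathcal{L}_{\Gamma}^S(n) \leq \mathcal{R}_{\Gamma}^S(n)$. (If $\Gamma$ is RF this value is finite for every $n$; if not, $\mathcal{R}_{\Gamma}^S(n) = \infty$ and the inequality is vacuous.) For the ``in particular'' clause, if $\Gamma$ is RF then $\mathcal{R}_{\Gamma}^S(n) < \infty$ for all $n$, so $\mathcal{L}_{\Gamma}^S(n) < \infty$ for all $n$; since every finite subset of $\Gamma$ is contained in some ball $B_S(n)$, Remark \ref{restrmrk} gives a local embedding of that finite subset by restriction, so $\Gamma$ is LEF.

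There is no real obstacle here: the statement is essentially a definitional unwinding, and the only thing to be slightly careful about is the bookkeeping around the possibility that $\Gamma$ is not residually finite (so that $\mathcal{R}_{\Gamma}^S(n)$ may be $\infty$) and the matching of the generating set $S$ used for both growth functions, which is the same $S$ by hypothesis, so no rescaling is needed. The argument is exactly the one sketched in the sentence preceding the lemma in the excerpt, and the cross-reference to \cite{ArzChe} \textsection 4.3 indicates it is already folklore.
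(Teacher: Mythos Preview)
Your proposal is correct and is exactly the argument the paper gives: the sentence immediately preceding the lemma already states that the restriction of a homomorphism injective on $F$ is a local embedding of $F$, and you have simply unwound this with $F = B_S(n)$ and taken minima. There is nothing to add.
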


Already this raises the question of how far apart 
$\mathcal{L}_{\Gamma} ^S (n)$ and $\mathcal{R}_{\Gamma} ^S (n)$ 
can be (the subject of our Theorem \ref{MainThm}). 
The next observation, previously made in \cite{BoRaStud} Lemma 1.3, 
is that for finitely presented groups there is essentially no difference. 
The first part of the following proposition 
is stated as Theorem 2.2 in \cite{VerGor}. 
Their proof (which we reproduce here) also implies the second part. 

\begin{propn} \label{LEFRFpropn}
If $\Gamma$ is finitely presented and LEF then it is residually finite. 
Moreover, if $\Gamma \cong \langle S \mid R \rangle$ 
is a finite presentation with $R \subseteq F(S)$ 
consisting of reduced words of length at most $n_0$, 
and $\mathcal{C}$ is a class of finite groups, 
then: $\mathcal{R}_{\Gamma,\mathcal{C}} ^S (n) 
= \mathcal{L}_{\Gamma,\mathcal{C}} ^S (n)$
for all $n \geq n_0$. 
\end{propn}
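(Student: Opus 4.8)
The plan is to establish the \emph{moreover} part first, as the residual finiteness of $\Gamma$ will then follow by specialising $\mathcal{C}$ to the class of all finite groups. So fix a finite presentation $\Gamma \cong \langle S \mid R \rangle$ in which every $r \in R$ is a reduced word of length at most $n_0$, fix a class $\mathcal{C}$ of finite groups, and fix $n \geq n_0$; we may assume $n \geq 1$, the remaining case $n = 0$ being immediate. I would prove the two inequalities $\mathcal{L}_{\Gamma,\mathcal{C}}^S(n) \leq \mathcal{R}_{\Gamma,\mathcal{C}}^S(n)$ and $\mathcal{R}_{\Gamma,\mathcal{C}}^S(n) \leq \mathcal{L}_{\Gamma,\mathcal{C}}^S(n)$ separately. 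The first is the easy direction and holds for every $n$: if $\phi : \Gamma \twoheadrightarrow F$ is a surjection onto $F \in \mathcal{C}$ with $\phi|_{B_S(n)}$ injective, then $\phi|_{B_S(n)}$ is a local embedding, and its image contains $\phi(S)$, which generates $F = \phi(\Gamma)$; thus $\phi|_{B_S(n)}$ is an admissible witness for $\mathcal{L}_{\Gamma,\mathcal{C}}^S(n)$, giving $\mathcal{L}_{\Gamma,\mathcal{C}}^S(n) \leq |F|$ and hence the inequality.

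For the reverse inequality --- the substance of the proposition --- I would start from a local embedding $\pi : B_S(n) \rightarrow Q$ with $Q \in \mathcal{C}$ and $\langle \pi(B_S(n)) \rangle = Q$ realising $|Q| = \mathcal{L}_{\Gamma,\mathcal{C}}^S(n)$, and extend it to a surjection $\bar{\pi} : \Gamma \twoheadrightarrow Q$ that is injective on $B_S(n)$. Since $F(S)$ is free, $s \mapsto \pi(s)$ extends to a homomorphism $\tilde{\pi} : F(S) \rightarrow Q$. The key claim is that $\pi$ and $\tilde{\pi}$ agree on $B_S(n)$ in the following sense: if $w = s_{i_1}^{\epsilon_1} \cdots s_{i_m}^{\epsilon_m}$ is a word of length $m \leq n$ representing $g \in B_S(n)$, then $\pi(g) = \pi(s_{i_1})^{\epsilon_1} \cdots \pi(s_{i_m})^{\epsilon_m} = \tilde{\pi}(w)$. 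This I would prove by induction on $m$: each prefix $p_k = s_{i_1}^{\epsilon_1} \cdots s_{i_k}^{\epsilon_k}$ again represents an element of $B_S(n)$ (being of length at most $k \leq m \leq n$), so the partial-homomorphism relation $\pi(p_{k-1} \cdot s_{i_k}^{\epsilon_k}) = \pi(p_{k-1}) \pi(s_{i_k}^{\epsilon_k})$ is legitimately applicable at every step; along the way one checks, from the partial-homomorphism axiom applied to $1 = 1 \cdot 1$ and $1 = s \cdot s^{-1}$, that $\pi(1_{\Gamma}) = 1_Q$ and $\pi(s^{-1}) = \pi(s)^{-1}$.

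Granting the claim, each relator $r \in R$ has length at most $n_0 \leq n$ and represents $1_{\Gamma} \in B_S(n)$, so applying the claim to the word $r$ yields $\tilde{\pi}(r) = \pi(1_{\Gamma}) = 1_Q$. Hence $\tilde{\pi}$ vanishes on the normal closure of $R$ and descends to a homomorphism $\bar{\pi} : \Gamma \rightarrow Q$. By the claim, $\bar{\pi}|_{B_S(n)} = \pi$, which is injective; and $\bar{\pi}$ is surjective since its image contains $\langle \pi(B_S(n)) \rangle = Q$. Thus $\bar{\pi}$ witnesses $\mathcal{R}_{\Gamma,\mathcal{C}}^S(n) \leq |Q| = \mathcal{L}_{\Gamma,\mathcal{C}}^S(n)$, completing the equality. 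Finally, since $\Gamma$ is LEF, $\mathcal{L}_{\Gamma}^S(n)$ is finite for every $n$; taking $\mathcal{C}$ to be the class of all finite groups gives $\mathcal{R}_{\Gamma}^S(n) < \infty$ for all $n \geq n_0$, and as every finite subset of $\Gamma$ lies in some such ball, $\Gamma$ is residually finite. I expect the only real obstacle to be the careful bookkeeping in the inductive claim --- ensuring that every prefix of a length-$\leq n$ word remains inside $B_S(n)$ so the partial-homomorphism relation always applies, and handling inverse letters and the identity correctly; the hypothesis $n \geq n_0$ enters precisely so that every defining relator can be ``resolved'' inside the ball on which $\pi$ is controlled.
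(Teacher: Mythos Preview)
Your proof is correct and follows essentially the same approach as the paper: the paper isolates the extension step as a separate lemma (any partial homomorphism $B_S(n)\rightarrow\Delta$ extends to a homomorphism $\Gamma\rightarrow\Delta$ once $n\geq n_0$), proved by the same induction on word length you describe, and then applies it to a local embedding witnessing $\mathcal{L}_{\Gamma,\mathcal{C}}^S(n)$. Your version inlines this lemma and is slightly more explicit about the base cases $\pi(1_\Gamma)=1_Q$ and $\pi(s^{-1})=\pi(s)^{-1}$, but the argument is the same.
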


The proof is based on the following standard fact 
from combinatorial group theory. 

\begin{lem} \label{extendlem}
Let $\Gamma \cong \langle s_1 , \ldots , s_k \mid R \rangle$ 
be a finite presentation and let $H$ be a group. 
A function $s_i \mapsto h_i$ extends (necessarily uniquely) 
to a homomorphism $\Gamma \rightarrow H$ if and only if, 
for all $w(s_1, \ldots , s_k) \in R$, 
$w(h_1, \ldots , h_k) = 1$. 
\end{lem}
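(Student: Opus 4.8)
The plan is to prove Lemma~\ref{extendlem}, the standard universal-property characterization of homomorphisms out of a finitely presented group. Since I may use everything stated earlier, but this is in fact a completely self-contained combinatorial group theory fact, I would give a direct argument via the free group.

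First I would recall the setup: $\Gamma \cong \langle s_1,\ldots,s_k \mid R\rangle$ means $\Gamma = F/N$, where $F = F(s_1,\ldots,s_k)$ is the free group on the generators and $N = \langle\langle R\rangle\rangle$ is the normal closure of $R$ in $F$; write $q : F \to \Gamma$ for the quotient map, so $q(s_i)$ is the image of $s_i$ in $\Gamma$ (which by abuse we also call $s_i$). The forward direction is immediate: if $\psi : \Gamma \to H$ is a homomorphism with $\psi(s_i) = h_i$, then for any $w = w(s_1,\ldots,s_k) \in R$ we have $w(h_1,\ldots,h_k) = w(\psi(s_1),\ldots,\psi(s_k)) = \psi(w(s_1,\ldots,s_k)) = \psi(1) = 1$, using that $w$ is a relator and $\psi$ a homomorphism. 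Uniqueness is also immediate whenever an extension exists, since the $s_i$ generate $\Gamma$ and a homomorphism is determined by its values on a generating set.

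For the converse — the substantive direction — suppose $w(h_1,\ldots,h_k) = 1$ for all $w \in R$. By the universal property of the free group $F$, the assignment $s_i \mapsto h_i$ extends uniquely to a homomorphism $\tilde\psi : F \to H$, namely $\tilde\psi(s_{i_1}^{\epsilon_1}\cdots s_{i_m}^{\epsilon_m}) = h_{i_1}^{\epsilon_1}\cdots h_{i_m}^{\epsilon_m}$. The key step is to check that $N \subseteq \ker\tilde\psi$: since $\ker\tilde\psi$ is a normal subgroup of $F$ containing every $w \in R$ (by hypothesis), it contains the normal closure $N = \langle\langle R\rangle\rangle$, this being the smallest normal subgroup of $F$ containing $R$. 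Therefore $\tilde\psi$ factors through the quotient $q : F \to F/N = \Gamma$, yielding a homomorphism $\psi : \Gamma \to H$ with $\psi \circ q = \tilde\psi$; in particular $\psi(s_i) = \psi(q(s_i)) = \tilde\psi(s_i) = h_i$, as required. I do not foresee a genuine obstacle here: the only point needing care is the characterization of the normal closure as the smallest normal subgroup containing $R$ and hence its containment in any normal kernel containing $R$, which is routine. (Note the lemma as stated does not assume $R$ is finite, only that the presentation is ``finite'' in the sense of finitely many generators; the argument above uses nothing about the cardinality of $R$, so it works verbatim, and in the application $R$ will indeed be finite anyway.)
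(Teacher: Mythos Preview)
Your proof is correct and is exactly the standard argument via the universal property of the free group and factoring through the normal closure. The paper itself does not prove this lemma at all; it is introduced as ``the following standard fact from combinatorial group theory'' and stated without proof, so your write-up simply supplies the routine justification the paper omits.
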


\begin{lem} \label{LEFRFlem}
Let $\Gamma \cong \langle S \mid R \rangle$ and $n_0$ 
be as in Proposition \ref{LEFRFpropn} and let $\Delta$ be a group. 
Then for $n \geq n_0$ any partial homomorphism 
$\pi : B_S(n) \rightarrow \Delta$ extends to a 
homomorphism $\Gamma \rightarrow \Delta$. 
\end{lem}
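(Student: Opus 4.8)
The plan is to take a partial homomorphism $\pi : B_S(n) \to \Delta$ with $n \geq n_0$ and show the assignment $s_i \mapsto \pi(s_i)$ satisfies the relations in $R$, so that Lemma \ref{extendlem} gives the desired extension. First I would note that every $s_i \in S$ lies in $B_S(1) \subseteq B_S(n)$, so $\pi(s_i)$ makes sense; moreover $\pi(1) = 1$ since $\pi$ is a partial homomorphism and $1 \cdot 1 = 1 \in B_S(n)$. The key point is then to control how $\pi$ interacts with words: I claim that for any reduced word $w = s_{i_1}^{\epsilon_1} \cdots s_{i_m}^{\epsilon_m}$ with $m \leq n$, one has $\pi\big(\overline{w}\big) = \pi(s_{i_1})^{\epsilon_1} \cdots \pi(s_{i_m})^{\epsilon_m}$ in $\Delta$, where $\overline{w} \in \Gamma$ is the group element represented by $w$.

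To prove this claim I would induct on $m$. The base case $m = 0$ is $\pi(1) = 1$. For the inductive step, write $\overline{w} = \overline{w'} \cdot s_{i_m}^{\epsilon_m}$ where $w'$ is the prefix of length $m-1$; both $\overline{w'}$ and $\overline{w}$ lie in $B_S(n)$ since $m \leq n$, and $s_{i_m}^{\epsilon_m} \in B_S(1) \subseteq B_S(n)$. The mild subtlety is that the partial homomorphism property applies to the pair $(\overline{w'}, s_{i_m}^{\epsilon_m})$ only because their product $\overline{w}$ is in $B_S(n)$, which is exactly guaranteed by $m \leq n$; here one also needs that a partial homomorphism respects inverses, i.e. $\pi(g^{-1}) = \pi(g)^{-1}$ whenever both $g, g^{-1} \in B_S(n)$, which follows from $\pi(g)\pi(g^{-1}) = \pi(1) = 1$. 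Then $\pi(\overline{w}) = \pi(\overline{w'})\pi(s_{i_m}^{\epsilon_m}) = \pi(\overline{w'})\pi(s_{i_m})^{\epsilon_m}$, and the inductive hypothesis on $w'$ finishes the step.

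Now apply the claim to a relator $w \in R$: by hypothesis $w$ is reduced of length at most $n_0 \leq n$, and $\overline{w} = 1$ in $\Gamma$, so the claim yields $\pi(s_{i_1})^{\epsilon_1}\cdots \pi(s_{i_m})^{\epsilon_m} = \pi(1) = 1$ in $\Delta$. Thus $s_i \mapsto \pi(s_i)$ kills every relator, and Lemma \ref{extendlem} produces a homomorphism $\widetilde{\pi} : \Gamma \to \Delta$ with $\widetilde{\pi}(s_i) = \pi(s_i)$. Finally I would check $\widetilde{\pi}$ extends $\pi$: any $g \in B_S(n)$ is $\overline{w}$ for some reduced word $w$ of length $\leq n$, so $\widetilde{\pi}(g) = \widetilde{\pi}(s_{i_1})^{\epsilon_1}\cdots = \pi(s_{i_1})^{\epsilon_1}\cdots = \pi(g)$ by the claim again. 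I expect the main (and only) real obstacle to be the bookkeeping around which products and inverses are legitimately covered by the partial homomorphism hypothesis — everything hinges on the bound $n \geq n_0$ ensuring that all intermediate products arising from a relator stay inside $B_S(n)$ — but this is routine once the induction is set up carefully, exactly as in the proof of Lemma \ref{LEFmarkedgrpslem2}.
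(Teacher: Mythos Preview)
Your proposal is correct and follows essentially the same approach as the paper: prove by induction on word length that $\pi(w(s_1,\ldots,s_k)) = w(\pi(s_1),\ldots,\pi(s_k))$ for all words of length at most $n$, apply this to the relators in $R$, and invoke Lemma~\ref{extendlem}. You supply more bookkeeping detail than the paper (the treatment of inverses, the base case $\pi(1)=1$, and the verification that the resulting homomorphism genuinely extends $\pi$ on all of $B_S(n)$), but the argument is the same.
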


\begin{proof}
Let $S = \lbrace s_1 , \ldots , s_k \rbrace$ 
and let $\pi (s_i) = t_i \in \Delta$. 
An easy induction on the word length shows that: 
\begin{center}
$\pi (w (s_1 , \ldots , s_k)) = w (t_1 , \ldots , t_k)$
\end{center}
for all $w \in F(S)$ of length at most $n$. 
In particular this holds for $w \in R$. 
However $w (s_1 , \ldots , s_k) = 1$ in $\Gamma$, 
so $w (t_1 , \ldots , t_k) = 1$. 
The result now follows from Lemma \ref{extendlem}. 
\end{proof}

\begin{proof}[Proof of Proposition \ref{LEFRFpropn}]
Apply Lemma \ref{LEFRFlem} to the local embeddings 
$\pi : B_S (n) \rightarrow Q$, for $Q \in \mathcal{C}$. 
\end{proof}

\begin{coroll} 
Suppose $\mathcal{R}_{\Gamma} ^S (n) \neq  \mathcal{L}_{\Gamma} ^S (n)$ for infinitely many $n$. Then $\Gamma$ is not 
finitely presentable. 
\end{coroll}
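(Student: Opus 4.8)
\emph{Plan.} I would prove the contrapositive: if $\Gamma$ is finitely presentable, then $\mathcal{R}_{\Gamma}^S(n) = \mathcal{L}_{\Gamma}^S(n)$ for all but finitely many $n$, so the hypothesis of the corollary fails. The whole argument is driven by Proposition \ref{LEFRFpropn}; what requires a little care is (a) arranging a finite presentation on the given generating set $S$, and (b) passing between the relative invariants $\mathcal{R}_{\Gamma,\mathcal{C}}^S$, $\mathcal{L}_{\Gamma,\mathcal{C}}^S$ of that proposition and the absolute ones $\mathcal{R}_{\Gamma}^S$, $\mathcal{L}_{\Gamma}^S$. For (a): finite presentability is independent of the choice of finite generating set (a standard consequence of Tietze transformations — combine the old presentation with relations expressing old generators in terms of $S$ and vice versa, then eliminate the old generators), so $\Gamma$ admits a finite presentation $\langle S \mid R\rangle$ with $R \subseteq F(S)$; fix such an $R$ and let $n_0$ bound the lengths of its reduced relators.

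Assume first that $\Gamma$ is LEF. Applying Proposition \ref{LEFRFpropn} with $\mathcal{C} = \mathcal{S}$ the class of all finite groups gives $\mathcal{R}_{\Gamma,\mathcal{S}}^S(n) = \mathcal{L}_{\Gamma,\mathcal{S}}^S(n)$ for all $n \geq n_0$. Now $\mathcal{L}_{\Gamma,\mathcal{S}}^S = \mathcal{L}_{\Gamma}^S$ is exactly Remark \ref{LEFgrwthCrmrk}, and $\mathcal{R}_{\Gamma,\mathcal{S}}^S = \mathcal{R}_{\Gamma}^S$ because any homomorphism $\pi : \Gamma \to F$ into a finite group that is injective on $B_S(n)$ may be replaced by the corestriction $\Gamma \twoheadrightarrow \pi(\Gamma)$, which is still injective on $B_S(n)$ and has smaller or equal order. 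Hence $\mathcal{R}_{\Gamma}^S(n) = \mathcal{L}_{\Gamma}^S(n)$ for every $n \geq n_0$, so there are only finitely many $n$ with $\mathcal{R}_{\Gamma}^S(n) \neq \mathcal{L}_{\Gamma}^S(n)$.

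It remains to dispose of the degenerate case in which $\Gamma$ is finitely presentable but not LEF. Then some finite $F \subseteq \Gamma$ admits no local embedding into a finite group, so by Remark \ref{restrmrk} we have $\mathcal{L}_{\Gamma}^S(n) = \infty$ once $B_S(n) \supseteq F$; and since residual finiteness implies the LEF property, $\Gamma$ is not residually finite, so likewise $\mathcal{R}_{\Gamma}^S(n) = \infty$ for all large $n$. Thus both functions equal $\infty$ for all large $n$, and again they differ at only finitely many $n$. In either case the hypothesis of the corollary is violated, so a group satisfying that hypothesis cannot be finitely presentable. I do not expect any genuine obstacle here: the corollary is essentially a repackaging of the contrapositive of Proposition \ref{LEFRFpropn}, and the only mildly delicate points are the generating-set independence of finite presentability, the relative-versus-absolute identification above, and remembering to treat the non-LEF case separately.
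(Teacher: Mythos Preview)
Your proof is correct and follows exactly the approach the paper intends: the corollary is stated without proof as an immediate consequence of Proposition~\ref{LEFRFpropn}, and your contrapositive argument simply makes explicit the details (independence of finite presentability from the generating set, the identification of the relative and absolute invariants via Remark~\ref{LEFgrwthCrmrk}, and the degenerate non-LEF case) that the paper leaves to the reader.
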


\begin{coroll} \label{LEFRFequivcoroll}
Suppose $\Gamma$ is finitely presented. 
Then $\mathcal{R}_{\Gamma} \approx \mathcal{L}_{\Gamma}$. 
\end{coroll}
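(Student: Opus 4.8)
The plan is to deduce Corollary \ref{LEFRFequivcoroll} directly from Proposition \ref{LEFRFpropn}. Suppose $\Gamma$ is finitely presented. Then $\Gamma$ admits \emph{some} finite presentation $\Gamma \cong \langle S \mid R \rangle$ with $S$ a finite generating set and $R$ a finite set of relators; after replacing each relator by its reduced form (which does not change the group, and only shortens words), we may assume $R \subseteq F(S)$ consists of reduced words, and since $R$ is finite we may set $n_0 = \max_{w \in R} \lvert w \rvert < \infty$. Applying Proposition \ref{LEFRFpropn} with $\mathcal{C} = \mathcal{S}$ the class of all finite groups gives $\mathcal{R}_{\Gamma}^S(n) = \mathcal{L}_{\Gamma}^S(n)$ for all $n \geq n_0$, using the identification $\mathcal{R}_{\Gamma}^S = \mathcal{R}_{\Gamma,\mathcal{S}}^S$ and $\mathcal{L}_{\Gamma}^S = \mathcal{L}_{\Gamma,\mathcal{S}}^S$ from Remark \ref{LEFgrwthCrmrk}.

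Next I would upgrade this eventual equality of functions to the equivalence $\mathcal{R}_{\Gamma} \approx \mathcal{L}_{\Gamma}$. One direction, $\mathcal{L}_{\Gamma}^S \preceq \mathcal{R}_{\Gamma}^S$, is already the content of the lemma immediately preceding Proposition \ref{LEFRFpropn}: in fact $\mathcal{L}_{\Gamma}^S(n) \leq \mathcal{R}_{\Gamma}^S(n)$ for \emph{all} $n$, so certainly $\mathcal{L}_{\Gamma}^S \preceq \mathcal{R}_{\Gamma}^S$ with constant $C = 1$. For the reverse direction, I have $\mathcal{R}_{\Gamma}^S(n) = \mathcal{L}_{\Gamma}^S(n)$ for $n \geq n_0$, so I only need to absorb the finitely many small values $n < n_0$ into the rescaling. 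Since $B_S(1) \subseteq B_S(2) \subseteq \cdots$, the function $\mathcal{R}_{\Gamma}^S$ is nondecreasing, so for $n < n_0$ we have $\mathcal{R}_{\Gamma}^S(n) \leq \mathcal{R}_{\Gamma}^S(n_0) = \mathcal{L}_{\Gamma}^S(n_0) \leq \mathcal{L}_{\Gamma}^S(n_0 n)$, using that $\mathcal{L}_{\Gamma}^S$ is also nondecreasing. Hence $\mathcal{R}_{\Gamma}^S(n) \leq \mathcal{L}_{\Gamma}^S(n_0 n)$ for all $n \geq 1$, which gives $\mathcal{R}_{\Gamma}^S \preceq \mathcal{L}_{\Gamma}^S$ with constant $C = n_0$. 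Combining the two inequalities yields $\mathcal{R}_{\Gamma}^S \approx \mathcal{L}_{\Gamma}^S$, and since both sides are independent of the choice of generating set up to $\approx$ (by the Corollary to Lemma \ref{subgrplem}, together with the analogous statement for $\mathcal{R}_{\Gamma}$), we obtain $\mathcal{R}_{\Gamma} \approx \mathcal{L}_{\Gamma}$.

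There is essentially no obstacle here: the substantive work is entirely contained in Proposition \ref{LEFRFpropn} (and hence in Lemma \ref{LEFRFlem}, whose inductive argument extends a partial homomorphism on a large enough ball to a genuine homomorphism by checking the relators). The only points requiring a word of care are the reduction to a presentation with reduced relators and the finiteness of $n_0$ — both trivial given finite presentability — and the standard observation that one is free to pass between different finite generating sets, which has already been recorded. I should perhaps also note in passing that $\mathcal{L}_{\Gamma}$ and $\mathcal{R}_{\Gamma}$ are finite-valued here: finite presentability plus LEF gives residual finiteness by the first part of Proposition \ref{LEFRFpropn}, so $\mathcal{R}_{\Gamma}^S(n) < \infty$ for all $n$, and the inequality $\mathcal{L}_{\Gamma}^S \leq \mathcal{R}_{\Gamma}^S$ then forces $\mathcal{L}_{\Gamma}^S$ to be finite too; but this is not strictly needed for the equivalence, which holds vacuously in the $\approx$ sense even allowing $\infty$ values on an initial segment.
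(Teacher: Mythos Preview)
Your proposal is correct and follows essentially the same route as the paper: the corollary is stated there without proof, as an immediate consequence of Proposition~\ref{LEFRFpropn}, and your argument simply spells out the passage from the eventual equality $\mathcal{R}_{\Gamma}^S(n) = \mathcal{L}_{\Gamma}^S(n)$ for $n \geq n_0$ to the equivalence $\approx$ via monotonicity and rescaling. The only remark is that the extra care you take with finiteness and the non-LEF case is not needed for the paper's purposes, but it does no harm.
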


Let $\Phi_{\Gamma} ^{\mathbf{a}}$ be as in the statement 
of Proposition \ref{LEFmarkedgrps}. 
In \cite{BoRaStud} Lemma 1.3 it was proved that for $\Gamma$ 
finitely presented, we have 
$\Phi_{\Gamma} ^{\mathbf{a}} \approx \mathcal{R}_{\Gamma}$. 
As such, Corollary \ref{LEFRFequivcoroll} also follows from 
\cite{BoRaStud} Lemma 1.3 and Proposition \ref{LEFmarkedgrps}. 

\subsection{LEF growth and word growth}

One may also compare LEF growth 
with word growth in finitely generated groups 
(as noted in \cite{ArzChe}). Recall that the \emph{word growth function} 
$\gamma_{\Gamma} ^S$ of $\Gamma$ with respect to $S$ is defined by: 
$\gamma_{\Gamma} ^S (n) = \lvert B_S (n) \rvert$. 
Up to $\approx$, $\gamma_{\Gamma} ^S$ is independent of $S$; 
we may therefore write $\gamma_{\Gamma}$ when to do so 
causes no ambiguity. 

\begin{rmrk}[\cite{ArzChe} \textsection 4.1] \label{growthlowerbdrmrk}
If there is a local embedding $B_S (n) \rightarrow Q$, 
then $\lvert B_S (n) \rvert \leq \lvert Q \rvert$, 
so $\gamma_{\Gamma} ^S (n) \leq \mathcal{L}_{\Gamma} ^S (n)$. 
\end{rmrk}

It is natural to ask when the lower bound 
from Remark \ref{growthlowerbdrmrk} is the only obstruction 
to constructing local embeddings from $\Gamma$ into 
small finite groups. 

\begin{defn}
We describe $\Gamma$ as 
\emph{efficiently locally embeddable in finite groups} (ELEF) 
if $\gamma_{\Gamma} \approx \mathcal{L}_{\Gamma}$. 
\end{defn}

\begin{ex} \label{FreeAbEx}
For any $k \in \mathbb{N}$, $\Gamma = \mathbb{Z}^k$ satisfies 
$n^k \preceq \gamma_{\Gamma}(n) \preceq \mathcal{L}_{\Gamma} (n) 
\preceq \mathcal{R}_{\Gamma} (n) \preceq n^k$, 
so $\mathcal{L}_{\Gamma} (n) \approx n^k$ and $\Gamma$ is ELEF. 
The final inequality $\mathcal{R}_{\Gamma} (n) \preceq n^k$ 
holds since the congruence homomorphism 
$\pi_q : \mathbb{Z}^k \rightarrow (\mathbb{Z}/q\mathbb{Z})^k$ 
restricts to an injection $B_S(n) \rightarrow (\mathbb{Z}/q\mathbb{Z})^k$ 
for $q > n$ (here $S$ is the standard generating set for $\mathbb{Z}^k$). 
\end{ex}

\begin{ex} \label{LinearEx}
\normalfont
Every finitely generated subgroup $\Gamma$ of $\GL_d(\mathbb{Z})$ 
satisfies $\mathcal{L}_{\Gamma} (n) \preceq \mathcal{R}_{\Gamma} (n) \preceq \exp(n)$. 
This is because (as is shown by an easy induction), 
for any finite $S \subseteq \GL_d(\mathbb{Z})$, 
a matrix $g \in B_S (n)$ satisfies 
$\lVert g \rVert_{\infty} \preceq \exp (n)$, 
so that there exists $q \in \mathbb{N}$ with $q \preceq \exp(n)$ 
such that the restriction of the congruence homomorphism 
$\pi_q : \GL_d(\mathbb{Z}) \rightarrow \GL_d(\mathbb{Z}/q\mathbb{Z})$ 
to $B_S (n)$ is injective. 
Thus any $\Gamma \leq \GL_d(\mathbb{Z})$ of exponential growth is ELEF. 
As is well-known, such subgroups are precisely those which are 
not virtually nilpotent. 
\end{ex}

\begin{propn} \label{polyLEFnilpprop}
Let $\Gamma$ be a finitely generated group. 
$\mathcal{L}_{\Gamma}$ is bounded above by a polynomial 
function iff $\Gamma$ is virtually nilpotent. 
Such a group $\Gamma$ is ELEF iff it is virtually abelian. 
\end{propn}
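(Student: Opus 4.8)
The plan is to prove Proposition \ref{polyLEFnilpprop} in two halves, corresponding to the two biconditionals, using as black boxes the Gromov polynomial-growth theorem, Bass's formula for the degree of polynomial growth of a nilpotent group, and the known estimates on full residual finiteness growth of nilpotent groups.

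\textbf{First biconditional.} Suppose $\mathcal{L}_{\Gamma}$ is bounded above by a polynomial. By Remark \ref{growthlowerbdrmrk}, $\gamma_{\Gamma}(n) \leq \mathcal{L}_{\Gamma}(n)$, so $\Gamma$ has polynomial word growth; by Gromov's theorem $\Gamma$ is virtually nilpotent. Conversely, suppose $\Gamma$ is virtually nilpotent. One approach: a virtually nilpotent group is a finite extension of a torsion-free nilpotent group $N$ of finite rank, which embeds in its Malcev completion and hence in $\GL_d(\mathbb{Z})$ for some $d$; then Example \ref{LinearEx} gives $\mathcal{R}_{\Gamma} \preceq \exp(n)$, but that is not polynomial. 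The cleaner route is to cite the known bound $\mathcal{R}_{N}(n) \preceq n^{c}$ for finitely generated nilpotent $N$ (this is in \cite{BouRab}, where the full residual finiteness growth of a nilpotent group is shown to be polynomial, with degree controlled by the Hirsch length and nilpotency class), then pass to $\Gamma$ using the fact that $\mathcal{L}_{\Gamma} \preceq \mathcal{R}_{\Gamma}$ together with a standard argument that a finite-index subgroup with polynomial RF growth forces polynomial RF growth of the overgroup (or, more simply, that $\mathcal{R}$ of a finite extension is at most polynomial in $\mathcal{R}$ of the finite-index nilpotent subgroup). Either way, $\mathcal{L}_{\Gamma} \preceq \mathcal{R}_{\Gamma}$ is polynomial.

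\textbf{Second biconditional.} Now assume $\Gamma$ is virtually nilpotent; we must show $\Gamma$ is ELEF (that is, $\gamma_{\Gamma} \approx \mathcal{L}_{\Gamma}$) iff $\Gamma$ is virtually abelian. If $\Gamma$ is virtually abelian, pass to a finite-index free abelian subgroup $\mathbb{Z}^k$; by Example \ref{FreeAbEx}, $\mathcal{L}_{\mathbb{Z}^k}(n) \approx n^k \approx \gamma_{\mathbb{Z}^k}(n)$, and both $\gamma$ and $\mathcal{L}$ are preserved up to $\approx$ under passage to finite-index subgroups (Lemma \ref{subgrplem} for $\mathcal{L}$, and the standard fact for $\gamma$), with $\gamma_{\Gamma} \approx n^k \approx \gamma_{\mathbb{Z}^k}$ by Bass; hence $\gamma_{\Gamma} \approx \mathcal{L}_{\Gamma}$. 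For the converse, suppose $\Gamma$ is virtually nilpotent but not virtually abelian. Then by Bass's formula the degree $D$ of polynomial word growth of $\Gamma$ strictly exceeds its Hirsch length $h$: writing the lower central series of a finite-index nilpotent subgroup $N$ with $i$-th factor of free rank $r_i$, one has $h = \sum_i r_i$ while $D = \sum_i i\, r_i$, and these are equal precisely when $r_i = 0$ for all $i \geq 2$, i.e. $N$ is abelian. On the other hand, one needs a lower bound $\mathcal{L}_{\Gamma}(n) \succeq n^h$ — intuitively, a local embedding of the ball of radius $n$ must ``see'' enough of the group that the image group has order at least like the full residual finiteness growth would demand, which for nilpotent groups is governed by the Hirsch length. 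The honest statement to invoke is that $\mathcal{R}_{N}(n) \succeq n^{h}$ (again from \cite{BouRab}: the full RF growth of a nontrivial f.g. nilpotent group of Hirsch length $h$ is $\approx n^{h}$ when... ) — but $\mathcal{L}$ can be strictly smaller than $\mathcal{R}$, so this alone does not suffice, and this is the crux.

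\textbf{Main obstacle.} The hard part is the lower bound $\mathcal{L}_{\Gamma}(n) \succeq n^{D}$ (equivalently, ruling out ELEF) when $\Gamma$ is virtually nilpotent but not virtually abelian; one cannot simply quote an RF-growth lower bound since $\mathcal{L}_{\Gamma}$ may be genuinely smaller than $\mathcal{R}_{\Gamma}$. I expect the right tool is a direct counting argument at the level of local embeddings: a local embedding $\pi: B_S(n) \to Q$ with $\langle \pi(B_S(n))\rangle = Q$ forces $Q$ to satisfy all the ``local'' relations of $\Gamma$ of length $\leq n$, and in particular — because nilpotent groups are finitely presented, or because their relevant commutator identities have bounded length — $Q$ must itself be nilpotent-like on this scale, with a lower central series whose successive quotients have size at least comparable to those of $\Gamma$ restricted to the $n$-ball. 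Concretely, iterated commutators of generators of word length $\leq n$ produce roughly $n^{i}$ distinct elements at the $i$-th level of the lower central series (this is exactly the Bass counting), and a local embedding must keep these distinct in $Q$; tracking this through the central series of $Q$ yields $|Q| \succeq \prod_i n^{r_i} = n^{D}$. Since $D > h \geq \ldots$ and $D$ strictly exceeds the degree of $\gamma_{\Gamma}$ only when... wait — $D$ \emph{is} the degree of $\gamma_{\Gamma}$; the point is rather that we get $\mathcal{L}_{\Gamma} \succeq n^{D'}$ for some $D' > D$ when $N$ is non-abelian, because the local embedding must separate \emph{more} elements than $B_S(n)$ contains (it must be injective on products, forcing it to behave like an injection of a larger set, in the spirit of the $\lceil 3n/2 \rceil$ slack in Proposition \ref{LEFmarkedgrps} and of residual finiteness growth exceeding word growth for non-abelian nilpotent groups). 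Making this precise — identifying the exact exponent and showing it beats the word-growth exponent exactly in the non-virtually-abelian case — is where the real work lies, and I would model it closely on the Bou-Rabee computation of $\mathcal{R}$ for nilpotent groups, checking at each step that the argument only uses the multiplication on a ball of controlled radius and hence survives the passage from quotients to local embeddings.
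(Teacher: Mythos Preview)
You have missed the one observation that makes the whole proposition a two-line argument: finitely generated virtually nilpotent groups are \emph{finitely presented}. Once you know this, Proposition~\ref{LEFRFpropn} gives $\mathcal{L}_{\Gamma} \approx \mathcal{R}_{\Gamma}$ outright, and both biconditionals reduce to known statements about $\mathcal{R}_{\Gamma}$: polynomial $\mathcal{R}_{\Gamma}$ for virtually nilpotent $\Gamma$ is \cite{BoRaMcR} Theorem~1.3, and $\mathcal{R}_{\Gamma} \approx \gamma_{\Gamma}$ iff $\Gamma$ is virtually abelian is \cite{BoRaStud} Theorem~3. That is exactly the paper's proof.

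Your ``main obstacle'' --- getting a lower bound on $\mathcal{L}_{\Gamma}$ that beats $\gamma_{\Gamma}$ in the non-virtually-abelian case, when in general $\mathcal{L}_{\Gamma}$ can be strictly smaller than $\mathcal{R}_{\Gamma}$ --- is therefore not an obstacle at all here: for finitely presented groups the gap between $\mathcal{L}$ and $\mathcal{R}$ vanishes. Your proposed workaround (redo the Bou-Rabee lower bound for $\mathcal{R}$ and check at each step that only bounded-radius multiplication is used) would in fact succeed, but for a trivial reason: by Lemma~\ref{LEFRFlem}, once the ball radius exceeds the length of the longest relator, every local embedding \emph{is} the restriction of a homomorphism, so the RF argument transfers verbatim. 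You were on the verge of rediscovering Proposition~\ref{LEFRFpropn} the hard way. The confusion in your exponent-counting paragraph (the ``wait ---'' and the search for an exponent $D' > D$) is a symptom of not having this structural fact in hand; there is no need to compare Hirsch length to Bass degree directly.
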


\begin{proof}
If $\mathcal{L}_{\Gamma}$ is polynomially bounded, 
then by Remark \ref{growthlowerbdrmrk} and Gromov's 
polynomial growth theorem, 
$\Gamma$ is virtually nilpotent. 
Conversely recall that every finitely generated virtually nilpotent 
group is finitely presented, 
so by Proposition \ref{LEFRFpropn}, 
$\mathcal{L}_{\Gamma} \approx \mathcal{R}_{\Gamma}$. 
By \cite{BoRaMcR} Theorem 1.3 $\mathcal{R}_{\Gamma}$ 
is polynomially bounded for $\Gamma$ virtually nilpotent. 
For the second claim, by \cite{BoRaStud} Theorem 3 
$\mathcal{R}_{\Gamma} \approx \gamma_{\Gamma}$ 
iff $\Gamma$ is virtually abelian. 
\end{proof}

\section{Wreath products} \label{WPSect}

Recall that, given groups $\Gamma$ and $\Delta$, the 
(restricted, regular) \emph{wreath product} 
of $\Delta$ by $\Gamma$ is defined to be: 
\begin{equation*}
\Delta \wr \Gamma
= \big( \bigoplus_{g \in \Gamma} \Delta \big) \rtimes \Gamma
\end{equation*}
where the action of $\Gamma$ on the \emph{base group} 
$\bigoplus_{g \in \Gamma} \Delta$ is given by: 
\begin{equation*}
\big( (a_h)_{h \in \Gamma}\big)^{g} 
= \big(a_{hg}\big)_{h \in \Gamma} 
\end{equation*}
so that multiplication in $\Delta \wr \Gamma$ is given by: 
\begin{equation} \label{wreathmulteqn}
\big( (a_h)_{h \in \Gamma},g\big)\cdot \big( (b_h)_{h \in \Gamma},k \big) = 
\big( (a_h b_{h g^{-1}})_{h \in \Gamma},gk \big) \text{.}
\end{equation}
For $A \subseteq \Delta$ and $h \in \Gamma$ 
we write $A(h) \subseteq \bigoplus_{g \in \Gamma} \Delta$ 
for the copy of $A$ supported at $h$. 
That is, for $k \in \Gamma$: 
\begin{center}
$\lbrace a_k : a \in A(h) \rbrace
=\left\{ \begin{array}{cc} A & h=k \\ 
\lbrace e \rbrace & \text{otherwise} \end{array} \right.$.
\end{center}

\begin{rmrk} \label{WreathPrelimRmrk}
\begin{itemize}
\item[(i)] If $\Gamma$ and $\Delta$ are finitely generated by $S$ and $T$, respectively, 
then $\Delta \wr \Gamma$ is finitely 
generated by $S \cup T(e)$. 
\item[(ii)] If in addition $\Delta$ is nontrivial and 
$\Gamma$ is infinite, then $\Delta \wr \Gamma$ has exponential word growth. 
\end{itemize}
\end{rmrk}

Gr\"{u}nberg proved that wreath products are very seldom residually finite. 

\begin{thm}[\cite{Gruen} Theorem 3.2] \label{GruenThm}
If $\Delta$ is nonabelian and $\Omega$ is infinite 
then $\Delta \wr \Gamma$ is not residually finite. 
\end{thm}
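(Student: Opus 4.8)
The plan is to show that one fixed nontrivial element of the base group $B = \bigoplus_{g \in \Gamma}\Delta$ lies inside \emph{every} finite-index normal subgroup of $W := \Delta \wr \Gamma$ (where the indexing group $\Gamma$ plays the role of $\Omega$ in the statement, so our standing hypothesis is that $\Gamma$ is infinite); since residual finiteness of $W$ would force some finite quotient to separate this element from the identity, this suffices. As $\Delta$ is nonabelian, fix $a,b \in \Delta$ with $[a,b] \neq e$; the element to be trapped is $[a,b](e)$. Observe that all the elements $[a,b](g)$, $g \in \Gamma$, are conjugate in $W$ (the top copy of $\Gamma$ acts on $B$ by permuting coordinates), so it is enough to show that $[a,b](g) \in N$ for \emph{some} $g$: normality of $N$ then forces $[a,b](e) \in N$.

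Now fix a finite-index normal subgroup $N \trianglelefteq W$ and let $q : W \to W/N$ be the quotient map, with $W/N$ finite. Consider the map $\Gamma \to (W/N) \times (W/N)$ given by $g \mapsto \big(q(a(g)),\, q(b(g))\big)$. Its source is infinite while its target is finite, so by the pigeonhole principle there are three distinct elements $g_1, g_2, g_3 \in \Gamma$ on which this map is constant. In particular $a(g_1)a(g_2)^{-1} \in \ker q = N$ and $b(g_1)b(g_3)^{-1} \in N$, and hence the commutator $u := \big[\, a(g_1)a(g_2)^{-1},\; b(g_1)b(g_3)^{-1}\,\big]$ lies in $N$ as well.

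The final step is a direct evaluation of $u$ inside $B$, where multiplication, and hence commutators, is computed coordinatewise. The factor $a(g_1)a(g_2)^{-1}$ equals $a$ in coordinate $g_1$, equals $a^{-1}$ in coordinate $g_2$, and is trivial elsewhere; likewise $b(g_1)b(g_3)^{-1}$ equals $b$ in coordinate $g_1$, $b^{-1}$ in coordinate $g_3$, and is trivial elsewhere. Since $g_1, g_2, g_3$ are pairwise distinct, the two elements have common support only at $g_1$, so the coordinatewise commutator is $[a,b]$ in coordinate $g_1$ and trivial in every other coordinate: that is, $u = [a,b](g_1)$. Thus $[a,b](g_1) \in N$, so by the conjugacy observation $[a,b](e) \in N$ too; as $N$ was an arbitrary finite-index normal subgroup and $[a,b](e) \neq e$, we conclude that $W$ is not residually finite.

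I do not anticipate a serious obstacle: the argument is short, and the only delicate point is the wreath-product bookkeeping — choosing $g_1, g_2, g_3$ so that the two elements produced in the quotient overlap in exactly one coordinate (forcing their commutator to be supported there and nowhere else), together with the standard fact that commutators in a restricted direct product are taken factorwise. Infinitude of $\Gamma$ enters only through the pigeonhole step, and nonabelianness of $\Delta$ only through the existence of a pair $a,b$ with $[a,b] \neq e$.
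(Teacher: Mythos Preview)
The paper does not supply its own proof of this theorem; it is merely quoted as Gr\"unberg's result, with a citation and no argument. Your proof is correct and is essentially the classical one: use pigeonhole on the images of the coordinate copies $a(g),b(g)$ in a finite quotient to manufacture two elements of $N$ whose supports overlap in exactly one coordinate, so that their commutator is $[a,b]$ supported at that single coordinate, and then invoke normality to move it to the identity coordinate. The bookkeeping is clean and the use of three points in a single fibre of $g\mapsto\big(q(a(g)),q(b(g))\big)$ is a tidy way to arrange the required overlap.
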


By contrast Vershik and Gordon \cite{VerGor} showed that if 
$\Gamma$ and $\Delta$ are LEF groups, 
then so is their wreath products $\Delta \wr \Gamma$. 
Roughly speaking, if there are local embeddings from finite subsets 
of $\Gamma$ and $\Delta$ to finite groups $Q$ and $P$, respectively, 
then there is a corresponding finite subset of 
$\Delta \wr \Gamma$ admitting a local embedding into $P \wr Q$. 
Specializing to balls in the word-metric, 
and making this construction effective, one obtains the following. 

\begin{thm}[\cite{ArzChe} Theorem 40] \label{WreathUBThm}
Let $\Gamma$ and $\Delta$ be LEF groups, 
finitely generated by $S$ and $T$, respectively. 
Suppose $e_{\Gamma} \in \Gamma$ and $e_{\Delta} \in \Delta$. 
Then: 
\begin{equation} \label{WPregeqn}
\mathcal{L}_{\Delta \wr \Gamma} ^{T(e)\times S}  (n) 
\leq \big( \mathcal{L}_{\Delta} ^T (4n)^{\mathcal{L}_{\Gamma} ^{S} (4n)} \big) \mathcal{L}_{\Gamma} ^S (4n)\text{.} 
\end{equation}
\end{thm}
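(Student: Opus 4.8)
The plan is to make effective the Vershik--Gordon proof \cite{VerGor} that a wreath product of LEF groups is LEF. Using the defining property of LEF growth, fix a finite group $Q$ with $\lvert Q \rvert = \mathcal{L}_{\Gamma}^{S}(4n)$ together with a local embedding $\pi_{\Gamma} : B_S(4n) \rightarrow Q$, and a finite group $P$ with $\lvert P \rvert = \mathcal{L}_{\Delta}^{T}(4n)$ together with a local embedding $\pi_{\Delta} : B_T(4n) \rightarrow P$. By minimality of these orders we may assume that $\pi_{\Gamma}(S)$ generates $Q$ and $\pi_{\Delta}(T)$ generates $P$, and since a partial homomorphism necessarily fixes the identity, $\pi_{\Gamma}(e_{\Gamma}) = e_Q$ and $\pi_{\Delta}(e_{\Delta}) = e_P$. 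The target group is $P \wr Q$, whose order is $\lvert P \rvert^{\lvert Q \rvert} \lvert Q \rvert$ --- exactly the right-hand side of \eqref{WPregeqn} --- so it suffices to construct a local embedding of $F := B_{T(e) \times S}(n) \subseteq \Delta \wr \Gamma$ into $P \wr Q$ whose image generates $P \wr Q$.

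I would begin by describing elements of $F$. Writing an element of $\Delta \wr \Gamma$ as $\big( (a_h)_{h \in \Gamma}, g \big)$ and expanding a word of length at most $n$ in the generators via \eqref{wreathmulteqn} --- noting that a $T(e)$-letter occurring after a prefix with $\Gamma$-part $g'$ alters only the $g'$-coordinate of the base group --- one sees that if $\big( (a_h)_{h \in \Gamma}, g \big) \in F$ then $g$ and every element of $\supp\big( (a_h)_{h \in \Gamma} \big)$ have $S$-length $O(n)$, and every $a_h$ has $T$-length $O(n)$. Carrying out this count, together with the analogous count for the products that appear when two elements of $F$ are multiplied, one checks that radius $4n$ is enough: every element of $\Gamma$ (resp.\ $\Delta$) to which $\pi_{\Gamma}$ (resp.\ $\pi_{\Delta}$) will be applied below lies in $B_S(4n)$ (resp.\ $B_T(4n)$). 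I would make this accounting explicit here, since it is what fixes the constant.

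Now define $\Psi : F \rightarrow P \wr Q$ by
\[
\Psi\big( (a_h)_{h \in \Gamma}, g \big) = \big( (c_q)_{q \in Q}, \pi_{\Gamma}(g) \big), \qquad
c_q = \begin{cases}
\pi_{\Delta}(a_h) & \text{if } q = \pi_{\Gamma}(h) \text{ with } h \in B_S(4n),\\
e_P & \text{otherwise.}
\end{cases}
\]
This is well defined: $\pi_{\Gamma}$ is injective on $B_S(4n)$, so $h$ is determined by $q$; and since $\supp\big( (a_h)_{h \in \Gamma} \big) \subseteq B_S(4n)$ and $\pi_{\Delta}(e_{\Delta}) = e_P$, the two clauses are consistent and $(c_q)_{q \in Q}$ faithfully records $(a_h)_{h \in \Gamma}$. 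Injectivity of $\Psi$ on $F$ then follows from injectivity of $\pi_{\Gamma}$ on $B_S(4n)$, of $\pi_{\Delta}$ on $B_T(4n)$, and the support bound; and one checks directly that $\Psi$ sends the generators of $F$ to a generating set of $P \wr Q$ (the base generators $\delta(e)$ map to $\pi_{\Delta}(\delta)(e_Q)$, and conjugating these by the images of the $S$-generators sweeps out all of $\bigoplus_{q} P$).

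The crux is that $\Psi$ is a partial homomorphism. Let $x = \big( (a_h), g \big)$ and $y = \big( (b_h), k \big)$ lie in $F$ with $xy \in F$. By \eqref{wreathmulteqn}, $xy = \big( (a_h b_{h g^{-1}})_h, gk \big)$, while $\Psi(x)\Psi(y) = \big( (c_q d_{q \pi_{\Gamma}(g)^{-1}})_q, \pi_{\Gamma}(g)\pi_{\Gamma}(k) \big)$ (again by \eqref{wreathmulteqn}, now in $P \wr Q$), where $(c_q)$ and $(d_q)$ encode $(a_h)$ and $(b_h)$. The $\Gamma$-components agree because $g, k, gk \in B_S(4n)$ and $\pi_{\Gamma}$ is a partial homomorphism there. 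The base components are compared coordinatewise: the coordinates $q$ at which either tuple can be non-trivial are precisely those of the form $\pi_{\Gamma}(h)$ with $h \in \supp\big( (a_h) \big) \cup \supp\big( (b_h) \big)\cdot g \subseteq B_S(4n)$, and for each such $q$ one rewrites the shifted indices ($h g^{-1}$, and the corresponding index arising from $(b_h)$) using the partial-homomorphism property of $\pi_{\Gamma}$ --- together with $\pi_{\Gamma}(g^{-1}) = \pi_{\Gamma}(g)^{-1}$, which follows from $g g^{-1} = e$ --- before matching the two sides via the partial-homomorphism property of $\pi_{\Delta}$ applied to $a_h b_{h g^{-1}}$. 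The one delicate point, and the step I expect to be the real work, is to verify at each such rewriting that all three group elements involved (the two factors and their product) lie in the radius-$4n$ ball, so that the relevant map is defined there and its partial-homomorphism property may be invoked; this is exactly what the length count of the second paragraph ensures. Granting it, $\Psi$ is a local embedding of $F$ into $P \wr Q$ with generating image, and hence $\mathcal{L}_{\Delta \wr \Gamma}^{T(e) \times S}(n) \leq \lvert P \wr Q \rvert = \mathcal{L}_{\Delta}^{T}(4n)^{\mathcal{L}_{\Gamma}^{S}(4n)}\, \mathcal{L}_{\Gamma}^{S}(4n)$.
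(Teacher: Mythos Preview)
Your proposal is correct and follows essentially the same approach as the one the paper cites from \cite{ArzChe} (in turn based on \cite{HoltRees}): given local embeddings $\pi_\Gamma$ and $\pi_\Delta$ into $Q$ and $P$, build the natural local embedding of the $n$-ball of $\Delta\wr\Gamma$ into $P\wr Q$ by transporting base coordinates along $\pi_\Gamma$ and entries along $\pi_\Delta$, with the radius-$4n$ cushion absorbing the index shifts $h\mapsto hg^{-1}$. The paper itself does not spell out the argument beyond the remark that a local embedding $F\to Q$ induces one $\langle \Delta(f):f\in F\rangle\cdot F \to \Delta\wr Q$, so your write-up is a faithful fleshing-out of exactly that idea.
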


The proof in \cite{ArzChe} 
is in turn based on \cite{HoltRees} Theorem 3.1, 
wherein the additional hypothesis is made 
that $\Gamma$ is residually finite, 
however the same proof works for $\Gamma$ LEF; 
the point is that for a group $Q$ and $F \subseteq \Gamma$, a local embedding 
$\phi : F \rightarrow Q$ induces a local embedding: 
\begin{center}
$\langle \Delta (f) : f \in F \rangle \cdot F \rightarrow \Delta \wr Q$. 
\end{center}
At no point in the proof of \cite{HoltRees} Theorem 3.1 
is the hypothesis used that $\phi$ is the restriction of a 
homomorphism. 

It is unclear in general whether the construction, described above, 
of local embeddings 
from $\Delta \wr \Gamma$ based on those from $\Gamma$ and $\Delta$, 
in the most efficient possible in terms of the LEF growth. 
In certain special cases however, we do have a significant obstruction 
to the existence of local embeddings into small finite groups, 
coming from the fact that commuting elements can be recognized locally. 
Combining the next result with Theorem \ref{WreathUBThm}, 
we immediately obtain Theorem \ref{WPMainThm}. 

\begin{thm} \label{WreathLBThm}
Suppose that $\Delta$ is a non-trivial finite group 
satisfying $Z(\Delta) = 1$. Then: 
\begin{equation} \label{WPeqn2}
\exp \big( \log\lvert \Delta\rvert\gamma_{\Gamma} ^S(n) \big) \leq 
\mathcal{L}_{\Delta \wr \Gamma} ^{S \cup T(e)} (n)
\text{.}
\end{equation}
\end{thm}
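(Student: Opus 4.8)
The plan is to show that any local embedding of the ball $B_{S\cup T(e)}(n)$ into a finite group $Q$ is forced to contain a large internal direct power of $\Delta$, so that $|Q|$ is at least $|\Delta|$ raised to the size of the relevant index set. I would arrange for that index set to be $B_S(m)$ with $m$ a fixed fraction of $n$; since $\preceq$ permits rescaling the argument this yields the stated inequality, and combining it with Theorem~\ref{WreathUBThm} gives Theorem~\ref{WPMainThm}.

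So fix a local embedding $\pi:B_{S\cup T(e)}(n)\rightarrow Q$, write $D=\max_{\delta\in\Delta}|\delta|_T$ and put $m=\lfloor(n-2D)/4\rfloor$ (for $n\leq 2D$ there is nothing to prove). First I would locate the copies of $\Delta$ inside $Q$. For each $g\in B_S(m)$ the subgroup $\Delta(g)\leq\Delta\wr\Gamma$ lies inside $B_{S\cup T(e)}(n)$: a typical element of $\Delta(g)$ puts a single $\delta\in\Delta$ at the coordinate $g$, and is spelled by a geodesic from $e$ to $g$, then a word of length $\leq D$ in $T(e)$ representing $\delta$, then a geodesic back, of total length $\leq 2m+D\leq n$. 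Since $\Delta(g)$ is a finite subgroup contained in $B_{S\cup T(e)}(n)$ and $\pi$ is a partial homomorphism, $\pi$ restricts to an honest homomorphism on $\Delta(g)$, injective because $\pi$ is a local embedding; hence $H_g:=\pi(\Delta(g))$ is a subgroup of $Q$ isomorphic to $\Delta$.

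Next I would verify, one pair at a time, that these copies commute. For distinct $g,g'\in B_S(m)$ the groups $\Delta(g)$ and $\Delta(g')$ commute in $\Delta\wr\Gamma$ because they are supported on disjoint coordinates, and their product $\Delta(g)\times\Delta(g')$ is still contained in $B_{S\cup T(e)}(n)$: an element putting $\delta$ at $g$ and $\delta'$ at $g'$ is spelled by a closed walk through $e$, $g$ and $g'$ of length $\leq|g|_S+|g^{-1}g'|_S+|g'|_S\leq 4m$, together with $\leq 2D$ inserted letters from $T(e)$. Therefore $\pi$ is a homomorphism on $\Delta(g)\times\Delta(g')$, and so $H_g$ and $H_{g'}$ commute in $Q$. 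This pairwise bookkeeping is the crux: one cannot simply drop the whole base slice $\bigoplus_{g\in B_S(m)}\Delta$ into a single ball and apply $\pi$, since spelling a generic element of it requires a closed walk visiting every coordinate of $B_S(m)$, of length on the order of $\gamma_\Gamma^S(m)$ — which exceeds $n$ as soon as $\Gamma$ grows fast enough — so the compatibility of the images genuinely has to be established pair by pair.

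Finally I would invoke the centreless hypothesis. A pairwise-commuting family $(H_g)_{g\in B_S(m)}$ of subgroups of $Q$, each isomorphic to $\Delta$ with $Z(\Delta)=1$, generates its internal direct product: if $y\in H_{g_0}\cap\prod_{g\neq g_0}H_g$ then $y$ is centralised by $H_{g_0}$ (each factor $H_g$ with $g\neq g_0$ centralises $H_{g_0}$), so $y\in Z(H_{g_0})\cong Z(\Delta)=1$, and hence the multiplication map $\bigoplus_{g\in B_S(m)}H_g\rightarrow Q$ is injective. Consequently $|Q|\geq|\Delta|^{|B_S(m)|}=|\Delta|^{\gamma_\Gamma^S(m)}$, which is the claimed bound up to the rescaling of the argument built into $\preceq$ (a more careful choice of walks improves the constant $1/4$, but this is inessential). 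The only structural input is the centrelessness of $\Delta$, which is precisely what turns ``pairwise-commuting copies of $\Delta$'' into a direct power of the full order $|\Delta|^{\gamma_\Gamma^S(m)}$ without requiring $\pi$ to be injective on all of $\bigoplus_{g\in B_S(m)}\Delta$.
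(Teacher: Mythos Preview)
Your proposal is correct and follows essentially the same route as the paper's own proof: identify the copies $\Delta(g)$ for $g$ in a ball in $\Gamma$, use the local embedding to produce isomorphic copies $H_g\leq Q$, verify pairwise commutation via the partial-homomorphism property, and invoke $Z(\Delta)=1$ to force these copies to generate their internal direct product. The only differences are cosmetic bookkeeping --- you fix $m=\lfloor(n-2D)/4\rfloor$ explicitly, while the paper instead takes a local embedding of $B_{S\cup T(e)}(Cn)$ for a large enough constant $C$ --- and both arguments establish the inequality up to the rescaling inherent in $\preceq$.
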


\begin{proof}
Let $C > 0$ be sufficiently large (to be chosen),
let $H$ be a finite group and 
let $\pi : B_{S \cup T(e)} (Cn) \rightarrow H$ 
be a local embedding. 
Let $C^{\prime} > 0$ be such that $B_T(C^{\prime}) = \Delta$, 
so that $\Delta (e) \subseteq B_{T(e)} (C^{\prime})$, 
and for $g \in B_S(n)$, 
\begin{center}
$\Delta (g) = \Delta (e)^{g^{-1}} 
\subseteq B_{S \cup T(\omega_0)} (2n+C^{\prime})$.
\end{center}
Therefore, for $C > 2 + C^{\prime}$, 
the restriction of $\pi$ to $\Delta (g)$ 
is an injective homomorphism, so: 
\begin{equation} \label{ctrelesseqn1}
\Delta \cong \pi \big(\Delta (g)\big) \leq H\text{.}
\end{equation}
Next, let $g_1 , g_2 \in B_S(n)$ with $g_1 \neq g_2$. 
Then $\Delta (g_1), \Delta (g_2) 
\leq \Delta \wr \Gamma$ are commuting subgroups 
with trivial intersection, so for $C$ sufficiently large 
($C > 8 + 4 C^{\prime}$ suffices), 
the same holds for 
$\pi \big( \Delta (g_1)\big), 
\pi \big( \Delta (g_2)\big) \leq H$. 
Consider $K=\langle\pi\big(\Delta(g)\big):g\in B_S (n)\rangle\leq H$. 
Then each $\pi \big(\Delta (g)\big) \vartriangleleft K$. 
We claim that $K$ is the direct product of 
the $\pi \big(\Delta (g)\big)$, so that by (\ref{ctrelesseqn1}), 
\begin{center}
$\lvert H \rvert \geq \lvert K \rvert 
= \lvert \Delta \rvert^{\lvert B_S (n) \rvert}$
\end{center}
which yields the desired bound. Suppose otherwise. 
Then there exists $g_0 \in B_S (n)$ such that 
$1 \neq \pi \big(\Delta (g_0)\big) \cap L$, where: 
\begin{center}
$L=\langle\pi\big(\Delta (g)\big):g\in B_S (n), g\neq g_0\rbrace \rangle$. 
\end{center}
But $L$ centralises $\pi \big(\Delta (g_0)\big)$, 
so $\pi \big(\Delta (g_0)\big)$ has non-trivial centre, 
contradicting (\ref{ctrelesseqn1}) 
and our hypothesis on $\Delta$. 
\end{proof}

\begin{proof}[Proof of Theorem \ref{GrowthSpectrumThm}]
As in Example \ref{FreeAbEx} we have: 
\begin{equation} \label{FreeAbIneq}
n^k \approx \gamma_{\mathbb{Z}^k} (n) 
\preceq \mathcal{L}_{\mathbb{Z}^k}(n) 
\preceq \mathcal{R}_{\mathbb{Z}^k} (n) 
\approx n^k
\end{equation}
whence (iii). We also have: 
\begin{equation} \label{ESSELLIneq}
\exp(n) \approx \gamma_{\SL_d(\mathbb{Z})} (n) 
\preceq \mathcal{L}_{\SL_d(\mathbb{Z})}(n) 
\preceq \mathcal{R}_{\SL_d(\mathbb{Z})} (n) 
\approx \exp(n)
\end{equation}
for any $d \geq 2$ (as in Example \ref{LinearEx}). 
Now let $\Delta$ be a finite nontrivial centreless group. 
Taking $\Gamma = \SL_d(\mathbb{Z})$ in Theorem \ref{WPMainThm} 
yields (i), by (\ref{ESSELLIneq}), 
whereas taking $\Gamma = \mathbb{Z}^k$ 
yields (ii), by (\ref{FreeAbIneq}). 
\end{proof}

\section{Local embeddings of permutation groups} \label{MainThmSect}

\subsection{Local isomorphisms of Schreier graphs}

\begin{defn}
Let $G_1 , G_2$ be directed edge-coloured graphs 
(with colours in $C$) and let $l \in \mathbb{N}$. 
We say that $G_1$ is 
\emph{locally embedded in $G_2$ at radius $l$} if, 
for every $v \in V(G_1)$, there exists $w \in V(G_2)$ and an 
isomorphism of based coloured graphs $(B_v (l),v) \cong (B_w (l),w)$. 
We say that $G_1$ and $G_2$ are 
\emph{locally isomorphic at radius $l$} if 
each is locally embedded in the other at radius $l$, 
that is, $G_1$ and $G_2$ have the same set of 
isomorphism-types of balls of radius $l$. 
\end{defn}

\begin{lem} \label{permisolem}
For $i=1,2$ let $\Gamma_i$ be a group acting faithfully on a set 
$\Omega_i$, and let $\mathbf{S}_i$ 
be an ordered generating $d$-tuple in $\Gamma_i$. 
Suppose that the Schreier graphs 
$\Schr (\Gamma_i,\Omega_i,\mathbf{S}_i)$ 
are locally colour-isomorphic at some radius at least $\lceil 3l/2 \rceil$. 
Then there is a local embedding 
$B_{\mathbf{S}_1} (l) \rightarrow \Gamma_2$ extending 
$(\mathbf{S}_1)_j \mapsto (\mathbf{S}_2)_j$. 
\end{lem}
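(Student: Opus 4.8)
\textbf{Proof proposal for Lemma \ref{permisolem}.}

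The plan is to reduce this to Lemma \ref{LEFmarkedgrpslem2} by comparing the relevant Cayley balls, using the faithfulness of the actions to recover Cayley-graph information from Schreier-graph information. First I would observe that a ball $B_{\mathbf{S}_i}(r)$ in $\Cay(\Gamma_i,\mathbf{S}_i) = \Schr(\Gamma_i,\Gamma_i,\mathbf{S}_i)$ is itself a ball of radius $r$ around a vertex in \emph{some} Schreier graph of $\Gamma_i$, but not literally around a vertex of $\Schr(\Gamma_i,\Omega_i,\mathbf{S}_i)$; so I cannot directly invoke local isomorphism of the given Schreier graphs. Instead, the key point is that whether two reduced words $u, v$ of length at most $r$ in $F(\mathbf{S}_i)$ represent the same element of $\Gamma_i$ is detected by the action: $u = v$ in $\Gamma_i$ iff $\omega u = \omega v$ for all $\omega \in \Omega_i$ (faithfulness), which in turn is visible by tracing the two coloured edge-paths from $\omega$ inside $B_{\omega}(r) \subseteq \Schr(\Gamma_i,\Omega_i,\mathbf{S}_i)$.

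The main step is therefore to show: if the Schreier graphs are locally colour-isomorphic at radius $\lceil 3l/2 \rceil =: r$, then $(B_{\mathbf{S}_1}(l), e_{\Gamma_1}) \cong (B_{\mathbf{S}_2}(l), e_{\Gamma_2})$ as based coloured graphs, via the map extending $(\mathbf{S}_1)_j \mapsto (\mathbf{S}_2)_j$. Actually it is cleaner to work at radius $l$ for the graph isomorphism and use the slack up to $r$ only where multiplication is checked, exactly mirroring Lemma \ref{LEFmarkedgrpslem2}; so I would first establish the coincidence pattern of short words. Concretely, for reduced words $u, v$ of length at most $l$, I claim $u = v$ in $\Gamma_1$ iff $u = v$ in $\Gamma_2$. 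Given faithfulness, $u \neq v$ in $\Gamma_1$ means there is $\omega \in \Omega_1$ with $\omega u \neq \omega v$; the two edge-paths from $\omega$ labelled $u, v$ lie in $B_{\omega}(l)$, hence in $B_\omega(r)$, and under a based colour-isomorphism $B_\omega(r) \cong B_{\omega'}(r)$ in $\Schr(\Gamma_2,\Omega_2,\mathbf{S}_2)$ they are carried to edge-paths from $\omega'$ with the same labels $u, v$ and distinct endpoints, so $\omega' u \neq \omega' v$, giving $u \neq v$ in $\Gamma_2$. By the symmetric argument the coincidence patterns agree exactly, which is precisely the statement that $s_i \mapsto t_i$ induces a well-defined based coloured-graph isomorphism $B_{\mathbf{S}_1}(r) \cong B_{\mathbf{S}_2}(r)$ (vertices being equivalence classes of words under ``equal in the group''; adjacency being appending a generator, which is an operation on words). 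Equivalently, this says $\nu\big((\Gamma_1,\mathbf{S}_1),(\Gamma_2,\mathbf{S}_2)\big) \geq r = \lceil 3l/2 \rceil$.

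Once I have $\nu\big((\Gamma_1,\mathbf{S}_1),(\Gamma_2,\mathbf{S}_2)\big) \geq \lceil 3l/2 \rceil$, Lemma \ref{LEFmarkedgrpslem2} applied with $n = l$ immediately produces a local embedding $B_{\mathbf{S}_1}(l) \rightarrow \Gamma_2$ extending $(\mathbf{S}_1)_j \mapsto (\mathbf{S}_2)_j$, which is the conclusion. The main obstacle I anticipate is the bookkeeping in the first step: one must be careful that ``ball of radius $r$ around $\omega$ in the Schreier graph'' really does contain both edge-paths (it does, since each path has length at most $l \leq r$ starting at $\omega$), and that distinctness of endpoints $\omega u \neq \omega v$ is genuinely a property internal to that ball and hence transported by the local isomorphism — this is fine because the based isomorphism is injective on $B_\omega(r)$, so it cannot identify the two distinct endpoints. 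A secondary subtlety is making precise the identification of a Cayley ball with a combinatorial object built from words modulo group equality, so that ``the coincidence patterns agree'' literally yields a graph isomorphism; this is routine but should be stated carefully, and it is exactly the kind of word-length induction already used in the proofs of Lemmas \ref{LEFmarkedgrpslem1} and \ref{LEFmarkedgrpslem2}.
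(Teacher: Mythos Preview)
Your approach is essentially identical to the paper's: use faithfulness to translate ``$w_1 = w_2$ in $\Gamma_i$'' into the Schreier-graph condition ``$\omega w_1 = \omega w_2$ for all $\omega \in \Omega_i$'', observe that local isomorphism at radius $\lceil 3l/2 \rceil$ makes this condition the same for $i=1,2$, deduce $\nu\big((\Gamma_1,\mathbf{S}_1),(\Gamma_2,\mathbf{S}_2)\big) \geq \lceil 3l/2 \rceil$, and invoke Lemma~\ref{LEFmarkedgrpslem2}. The only wrinkle is that to conclude $\nu \geq r$ you should run your coincidence-pattern argument for words of length at most $r = \lceil 3l/2 \rceil$ rather than at most $l$ (your argument works verbatim at that radius, since paths of length $\leq r$ still lie in $B_\omega(r)$); with that adjustment the proof is correct and matches the paper's.
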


\begin{proof}
For $w_1 , w_2 \in F_d$ of length $\leq \lceil 3l/2\rceil$ 
and $i=1,2$, 
\begin{align} \label{ballcolourSchr1}
w_1 (\mathbf{S}_i) = w_2 (\mathbf{S}_i) 
& \Leftrightarrow \forall \omega \in \Omega_i, 
\omega w_1 (\mathbf{S}_i) = \omega w_2 (\mathbf{S}_i)
\end{align} 
(with the implication from right to left being by faithfulness 
of the action). 
By hypothesis, the sets of isomorphism types 
of based coloured balls in the 
$\Schr (\Gamma_i,\Omega_i,\mathbf{S}_i)$ 
are the same, so the right-hand conditions in (\ref{ballcolourSchr1}) 
are equivalent for $i=1,2$. 
In particular, in the notation of Subsection \ref{markedsubsect}, 
$\nu \big( (\Gamma_1,\mathbf{S}_1) 
, (\Gamma_2,\mathbf{S}_2) \big) \geq 3l/2$, 
and the conclusion follows from Lemma \ref{LEFmarkedgrpslem2}. 
\end{proof}

Lemma \ref{permisolem} provides a very effective practical tool 
for constructing local embeddings between groups. 
Here we give just one application, 
which will be key to the proof of Theorem \ref{BfBoundThm}. 

\begin{ex} \label{ncyc3cycex}
\normalfont
Let $\mathbf{S}=(s_1,s_2) \in \Sym (n)^2$, 
with $s_1$ an $n$-cycle and $s_2$ a $3$-cycle, 
and set $\Gamma = \langle s_1,s_2\rangle$. 
Then $\Schr(\Gamma,[n],\mathbf{S})$ 
consists of a $1$-coloured $n$-cycle; 
a $2$-coloured $3$-cycle with vertices $x,y,z \in [n]$ 
and a $2$-coloured loop based at every vertex 
$w \in [n] \setminus \lbrace x,y,z \rbrace$ 
(see Figure \ref{SymSchreierfig} below). 

The isomorphism type of $G = \Schr(\Gamma,[n],\mathbf{S})$ 
as a coloured graph is determined by the lengths of the 
$1$-coloured arcs between $x$, $y$ and $z$, together 
with the orientation of the $3$-cycle 
(clockwise or counterclockwise, assuming the $n$-cycle 
is oriented clockwise). 

Let $l \in \mathbb{N}$ and note that the set of balls in $G$ 
of radius $l$ detects only those lengths of $1$-coloured arcs between 
$x$, $y$ and $z$ which are at most $2l$. 
We therefore call a $1$-coloured arc ``long'' if it has length at least $2l+1$. 
Assuming now that $n \geq 6l+3$, 
at least one of the $1$-coloured arcs between $x$, $y$ and $z$ is long. 
Varying the length of the long arcs, while keeping them long, 
the set of isomorphism types balls of radius $l$ does not change. 
We therefore obtain, for any $m \geq 6l+3$, 
a graph $G^{\prime}$ with $m$ vertices which is locally isomorphic 
to $G$ at radius $l$ (though smaller values of $m$ may also be 
possible if one arc length is much less than $2l+1$). 
See Figure \ref{SymSchreierfig} for an illustration. 

Now, $G^{\prime}$ is the Schreier graph of 
some transitive  permutation group 
$\Gamma^{\prime} \leq \Sym(m)$ with respect to an ordered 
pair of generators $(s^{\prime} _1,s^{\prime} _2) \in \Sym(m)^2$. 
By Lemma \ref{permisolem} there is a local embedding 
$B_{\mathbf{S}} (2l/3) \rightarrow \Gamma^{\prime}$ 
extending $s_i \rightarrow s^{\prime} _i$. 

\begin{figure}[h] \label{SymSchreierfig}
\centering
\includegraphics[scale=0.3]{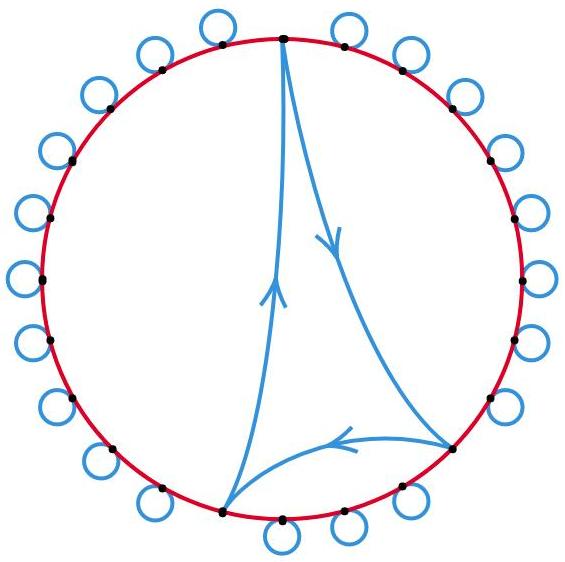}
\caption{A Schreier graph of $\Sym(24)$, 
generated by a $24$-cycle and a $3$-cycle. 
Orientations of red edges are suppressed. 
Note that removing a vertex from the interior 
either of the two long red arcs (and shortening the arc) 
does not change the set of isomorphism classes of balls 
of radius $3$. Proceeding in this way, 
we can produce a Schreier graph with as few as $18$ vertices 
which is locally isomorphic at radius $3$.}
\end{figure}

A similar argument works for permutation groups generated by 
an $n$-cycle and a $k$-cycle, with balls of radius $l$ 
satisfying $n \geq (2l+1)k$, 
though we shall not rehearse the details. 
\end{ex}

\subsection{Bou-Rabee \& Seward's Groups} \label{BoRaSewaSubsect}

In \cite{BoRaSewa}, 
Bou-Rabee and Seward proved the following Theorem. 

\begin{thm} \label{BoRaSewaThm}
For any function $f : \mathbb{N} \rightarrow \mathbb{N}$, 
there is a residually finite group $B_f$ 
and a $2$-element generating set $S_f \subseteq B_f$ such that 
$f(n) \geq \mathcal{R} ^{S_f} _{B_f} (n)$ for all $n \geq 8$. 
\end{thm}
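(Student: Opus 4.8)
The plan is to realise $B_f$ as a two-generated subgroup of a direct product of finite alternating groups whose degrees are tuned to $f$, and then to extract the residual finiteness growth from the lattice of finite quotients. First I would fix a rapidly increasing sequence of odd integers $5 \leq d_1 < d_2 < \cdots$ (to be specified during the argument) and, exactly as in Example \ref{ncyc3cycex}, take $\sigma_i \in \Alt(d_i)$ a $d_i$-cycle and $\tau_i \in \Alt(d_i)$ a $3$-cycle with $\langle \sigma_i, \tau_i\rangle = \Alt(d_i)$. Setting $a = (\sigma_i)_i$, $b = (\tau_i)_i$ in $\prod_i \Alt(d_i)$ and $B_f = \langle a,b\rangle$, $S_f = \{a,b\}$ produces a two-generated group, and residual finiteness is immediate: $B_f$ is a subgroup of a direct product of finite groups, so the coordinate projections $\pi_i : B_f \rightarrow \Alt(d_i)$ are finite quotients that jointly separate points.

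Second, I would pin down the finite quotients. Since the $d_i$ are distinct, the $\Alt(d_i)$ are pairwise non-isomorphic nonabelian finite simple groups, so each finite-coordinate projection $B_f \rightarrow \prod_{i\in I}\Alt(d_i)$ (with $I$ finite) is onto—a subdirect product of pairwise non-isomorphic nonabelian simple groups is the full product—and one verifies that every finite quotient of $B_f$ factors through such a subproduct, hence is itself one of these products. Consequently $\mathcal{R}^{S_f}_{B_f}(n)$ equals the minimum of $\prod_{i\in I}\lvert\Alt(d_i)\rvert$ over the finite sets $I$ for which $\bigl(\pi_i\bigr)_{i\in I}$ is injective on $B_{S_f}(n)$, equivalently over those $I$ such that every nontrivial element of $B_{S_f}(2n)$ is detected by some factor in $I$. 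The whole question is thereby reduced to a purely combinatorial one: to bound below the cheapest separating family of factors.

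Third, the lower bound itself. The basic observation is that $a^{L}$ with $L = \mathrm{lcm}(d_i : i\in I)$ is trivial in every factor indexed by $I$ yet nontrivial in $B_f$ (it survives in any factor whose degree does not divide $L$); hence a separating $I$ must satisfy $\mathrm{lcm}(d_i : i\in I) > 2n$, and more refined short elements—words detected only by prescribed high factors, constructed from the local structure of the cycle-plus-$3$-cycle Schreier graphs via Lemma \ref{permisolem}—force specific large factors into $I$. I would then choose the $d_i$ recursively so that the least radius at which a given factor becomes unavoidable is small compared with $\lvert\Alt(d_i)\rvert = d_i!/2$, arranging $d_i!/2 \geq f(n)$ whenever factor $i$ is forced at radius $n$; this gives $\mathcal{R}^{S_f}_{B_f}(n) \geq f(n)$ for all $n \geq 8$.

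The hard part, and the reason the theorem is genuinely nontrivial, is that this must work for \emph{arbitrary} $f$, including non-recursive ones, whereas a single alternating factor of degree $d$ can only be \emph{forced} by words whose length is already comparable to $d$ (one must wind around the $d$-cycle to see its length), so its contribution $d!/2$ is capped at radius $\sim d$ and cannot by itself outrun a super-factorial $f$. Overcoming this requires the \emph{detection radius} to grow far more slowly than the factor orders—that is, controlling the quotient lattice so that the smallest quotient injective on a small ball is genuinely enormous. I expect the real work to lie exactly here: either spacing the degrees and analysing $\mathrm{lcm}(d_1,\dots,d_m)$ against $d_{m+1}$ with great care, or replacing the bare product by an iterated tower of quotients in which each successive quotient is large but is forced by words only slightly longer than the previous ones, so that the order of the smallest quotient faithful on $B_{S_f}(n)$ tracks $f(n)$.
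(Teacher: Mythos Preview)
Your setup is the Neumann construction $N(d,1)$ (generators $a=(\sigma_i)_i$ with $\sigma_i$ a $d_i$-cycle, $b=(\tau_i)_i$ a $3$-cycle), and you correctly diagnose its fatal defect: with these generators, any word that singles out the factor $\Alt(d_i)$ must have length comparable to $d_i$, so the residual finiteness growth of $N(d,1)$ is at most roughly factorial, and no choice of the sequence $(d_i)$ can make it outrun a super-factorial $f$. You then promise to fix this by ``spacing the degrees'' or passing to an ``iterated tower'', but neither of these addresses the obstruction you have identified, and the proposal stalls precisely at the point that carries the theorem.

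The paper's (that is, Bou-Rabee and Seward's) resolution is not to change the ambient product or its quotient lattice but to change the \emph{generators}: one replaces the $d(n)$-cycle $\alpha_{d(n)}$ in the $n$th coordinate of $s$ by a carefully chosen power $\alpha_{d(n)}^{q(n)}$. The number-theoretic content (Proposition \ref{BoRaSewaNoProp}) is the simultaneous choice of an auxiliary increasing sequence $p(n)$ and exponents $q(n)$ with $d(n)=p(n)q(n)+2$ and with congruence conditions ensuring that $p(n)q(m)\not\equiv \pm 1,\pm 2 \pmod{d(m)}$ for $m\neq n$. Then $s^{p(n)}$ acts in the $n$th factor as $\alpha_{d(n)}^{-2}$, so the commutator $w_n=[t^{s^{p(n)}},t]$ has length at most $4p(n)+4$, is nontrivial in factor $n$ (the two $3$-cycles overlap), and is trivial in every other factor (the conjugated $3$-cycle has support disjoint from $\{1,2,3\}$). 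Hence $w_n$ is a \emph{short} word whose normal closure is exactly the copy of $\Alt(d(n))$ inside $B_f$, and any finite quotient faithful on $B_{S_f}(4p(n)+4)$ must contain $\Alt(d(n))$. Since $d(n)$ can be chosen enormous compared with $p(n)$ (condition (iv)), this decouples the detection radius from the factor order and gives $\mathcal{R}^{S_f}_{B_f}(m)\geq f(m)$ for all $m\geq 8$.

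A secondary gap: your claim that ``every finite quotient of $B_f$ factors through some finite subproduct $\prod_{i\in I}\Alt(d_i)$'' is false already for $N(d,1)$. By Lemma \ref{NeumQuotLem} the group surjects onto $\mathbb{Z}$, hence onto every finite cyclic group, and none of these factor through a product of nonabelian simple groups. The correct lower-bound argument does not need a description of all finite quotients; it only needs the observation that any quotient faithful on the ball containing $w_n$ must map the normal closure of $w_n$ injectively, and that normal closure is $\Alt(d(n))$ by simplicity.
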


The construction of $B_f$ is based on B.H. Neumann's 
construction of a continuum of pairwise non-isomorphic 
$2$-generated groups \cite{Neum}. 

For $d \in \mathbb{N}$, let $\alpha_d , \beta_d \in \Sym(d)$ 
be given by $\alpha_d =(1 \; 2 \cdots d) , \beta_d = (1 \; 2 \; 3)$. 
If $d$ is odd then$\langle \alpha_d,\beta_d \rangle = \Alt(d)$; 
indeed $\langle \alpha_d ^q,\beta_d \rangle = \Alt(d)$ 
whenever $(d,q)=1$. 
For functions $d,q : \mathbb{N} \rightarrow \mathbb{N}$, 
with $\im(d)$ consisting of odd integers at least $5$, let: 
\begin{equation*}
s = s(d,q) = (\alpha_{d(n)} ^{q(n)})_n ,t= t(d) = (\beta_{d(n)})_n \in G(d) = \prod_{n \in \mathbb{N}} \Alt (d(n))
\end{equation*}
and set $S = S(d,q) =\lbrace s,t\rbrace$; 
$N(d,q) =\langle S \rangle\leq G$. 
The groups studied in \cite{Neum} were the special case 
for which $d$ is strictly increasing and $q$ is identically $1$. 
In this setting Neumann proved the following result. 

\begin{thm}
For all $m \geq 5$, 
$N(d,1)$ has a normal subgroup isomorphic to $\Alt(m)$ 
iff $m \in \im (d)$. 
In particular, 
for any other strictly increasing function 
$d^{\prime} : \mathbb{N} \rightarrow \mathbb{N}_{\geq 5}$, 
$N (d,1) \cong N (d^{\prime},1)$ iff 
$d = d^{\prime}$. 
\end{thm}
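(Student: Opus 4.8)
The plan is to prove the displayed ``iff'' directly; the classification of the $N(d,1)$ then follows formally, since ``admitting a normal subgroup isomorphic to $\Alt(m)$'' is an isomorphism invariant of a group, and a strictly increasing function $\mathbb{N}\rightarrow\mathbb{N}_{\geq 5}$ is recovered from its image by listing that image in increasing order.

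For the forward direction, suppose $A\trianglelefteq N(d,1)$ with $A\cong\Alt(m)$, $m\geq 5$. Write $\pi_n\colon N(d,1)\rightarrow\Alt(d(n))$ for the restriction of the $n$-th coordinate projection of $G(d)$; since $d(n)$ is odd, $\pi_n\big(N(d,1)\big)=\langle\alpha_{d(n)},\beta_{d(n)}\rangle=\Alt(d(n))$. Then $\pi_n(A)\trianglelefteq\Alt(d(n))$ is trivial or everything by simplicity, and $\pi_n|_A$ is trivial or injective since $A$ is simple; so for each $n$ either $\pi_n(A)=1$ or $\Alt(m)\cong\Alt(d(n))$, i.e.\ $m=d(n)$. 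Hence $\{n:\pi_n|_A\neq 1\}=\{n:d(n)=m\}$, which is nonempty (as $A\neq 1$ and $\bigcap_n\ker\pi_n=1$), so $m\in\im(d)$; and since $d$ is injective it is a single coordinate $\{k\}$. Thus $A$ lies in $\bigcap_{n\neq k}\ker\pi_n=N(d,1)\cap\Alt(d(k))(k)$, and as $\pi_k(A)=\Alt(d(k))$ we in fact get $A=\Alt(d(k))(k)$.

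That computation also shows the reverse direction reduces to showing $\Alt(d(k))(k)\leq N(d,1)$ for every $k$ (this subgroup being a direct factor of $G(d)$, hence normal in $N(d,1)$), and for this it suffices to exhibit a nontrivial $g\in N(d,1)$ supported at the single coordinate $k$: its normal closure then has trivial $\pi_n$-image for $n\neq k$ and, by simplicity, full $\pi_k$-image, so equals $\Alt(d(k))(k)$. I would build such a $g$ by induction on $k$, proving the stronger statement $\prod_{n\leq k}\Alt(d(n))(n)\leq N(d,1)$. The basic tool is the commutator $h_k:=[t,t^{s^{\,d(k)-2}}]\in N(d,1)$ (with $s=(\alpha_{d(n)})_n$, $t=(\beta_{d(n)})_n$): at each coordinate $n$ this is a commutator of the $3$-cycle $(1\,2\,3)$ with a conjugate $3$-cycle on three ``consecutive'' points modulo $d(n)$. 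One checks that at coordinate $k$ these two $3$-cycles are $(1\,2\,3)$ and $(3\,4\,5)$, which overlap in a single point and so do not commute, giving $\pi_k(h_k)\neq 1$; whereas for $n>k$ strict monotonicity forces $d(n)\geq d(k)+2$, leaving enough room that the second $3$-cycle is disjoint from $\{1,2,3\}$, so that $\pi_n(h_k)=1$. Only the finitely many coordinates $n<k$ remain uncontrolled, and there I replace $h_k$ by $h_k\cdot\prod_{n<k}\pi_n(h_k)^{-1}(n)$, the correction term lying in $\prod_{n<k}\Alt(d(n))(n)\leq N(d,1)$ by the inductive hypothesis; this yields the required singly-supported element, hence $\Alt(d(k))(k)\leq N(d,1)$.

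The crux is this last construction: because $N(d,1)$ is an \emph{abstractly generated} subgroup of the infinite product $G(d)$, it contains only words in $s$ and $t$, so — unlike in $G(d)$ — one cannot produce a finitely supported element merely by taking a suitable power of $s$. The point of the commutator $h_k$ is that a product of two permutations with small fixed support automatically collapses to the identity in every factor $\Alt(d(n))$ large enough to separate those supports, so $h_k$ is ``almost'' singly supported for free, and the inductive correction disposes of the remaining finitely many factors. Everything else — simplicity of $\Alt(d(n))$ for $d(n)\geq 5$, generation of $\Alt(d(n))$ by $\alpha_{d(n)},\beta_{d(n)}$ for $d(n)$ odd, and $\Alt(m)\cong\Alt(m')\Rightarrow m=m'$ for $m,m'\geq 5$ — is standard and already recorded above.
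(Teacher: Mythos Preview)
The paper does not supply a proof of this theorem; it is stated with attribution to Neumann and used as background (the reverse direction is also implicit in the unproved Lemma~\ref{NeumQuotLem}). Your argument is correct and is essentially the classical one. Two cosmetic points: with the paper's right-action convention the conjugate $t^{s^{d(k)-2}}$ at coordinate $k$ is the $3$-cycle $(d(k)-1\;d(k)\;1)$ rather than $(3\;4\;5)$, but it still meets $(1\;2\;3)$ in exactly one point, so non-commutation holds as claimed; and strict monotonicity alone gives only $d(n)\geq d(k)+1$ for $n>k$ --- the stated $d(n)\geq d(k)+2$ uses odd-valuedness of $d$ --- but $d(n)\geq d(k)+1$ already suffices (together with $d(k)\geq 5$) to make the two $3$-cycles disjoint at coordinate $n$, so the slip is harmless.
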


\begin{coroll}
There are uncountably many isomorphism types of groups $N(d,1)$. 
\end{coroll}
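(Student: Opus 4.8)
The plan is to deduce this corollary directly from Neumann's Theorem, which was stated immediately before it, by a counting argument. The key point is that the parametrising data — strictly increasing functions $d : \mathbb{N} \rightarrow \mathbb{N}_{\geq 5}$ taking odd values — form an uncountable set, while Neumann's Theorem tells us that $\im(d)$ is an isomorphism invariant of $N(d,1)$, so distinct parameters give non-isomorphic groups.

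First I would observe that there are uncountably many strictly increasing functions $d : \mathbb{N} \rightarrow \mathbb{N}_{\geq 5}$ with odd values. For instance, fix the enumeration $5 = p_0 < p_1 < p_2 < \cdots$ of the odd integers at least $5$, and for each infinite subset $A \subseteq \mathbb{N}$ let $d_A$ enumerate $\lbrace p_i : i \in A \rbrace$ in increasing order; since there are uncountably many infinite subsets of $\mathbb{N}$, and $A \mapsto \im(d_A) = \lbrace p_i : i \in A \rbrace$ is injective, we obtain uncountably many such functions, with pairwise distinct images. Next I would invoke Neumann's Theorem: for $m \geq 5$, the group $N(d,1)$ has a normal subgroup isomorphic to $\Alt(m)$ if and only if $m \in \im(d)$. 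Hence if $d \neq d^{\prime}$ are two such functions, then (as both are strictly increasing with the same codomain constraints) $\im(d) \neq \im(d^{\prime})$, so there is some $m \geq 5$ lying in exactly one of the two images; this $m$ witnesses that one of $N(d,1)$, $N(d^{\prime},1)$ has a normal $\Alt(m)$-subgroup and the other does not, so $N(d,1) \not\cong N(d^{\prime},1)$.

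Combining these two observations, the map $d \mapsto N(d,1)$ (modulo isomorphism) is injective on an uncountable domain, so its image is an uncountable set of isomorphism types, which is exactly the claim. Strictly speaking one should note that distinct functions $d$ need not have distinct images in general, so I would either restrict to the family $\lbrace d_A \rbrace$ constructed above (whose images are manifestly distinct), or remark that the relation ``$\im(d) = \im(d^{\prime})$'' has countable classes among strictly increasing functions, so an uncountable family still survives. There is no real obstacle here: the corollary is a routine consequence of Neumann's Theorem together with the uncountability of $2^{\mathbb{N}}$. The only point requiring the tiniest care is ensuring the parametrising family is genuinely uncountable after quotienting by the (harmless) ambiguity of reparametrisation, which the explicit family $\lbrace N(d_A,1) : A \subseteq \mathbb{N} \text{ infinite} \rbrace$ handles cleanly.
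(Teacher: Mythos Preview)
Your proof is correct and follows the intended approach: the paper states the corollary without proof, and it is indeed immediate from Neumann's Theorem together with the uncountability of the parameter set. One small remark: your extra caution about $\im(d)$ versus $d$ is unnecessary, since a strictly increasing function $\mathbb{N} \to \mathbb{N}$ is uniquely determined by its image, so for strictly increasing $d,d^{\prime}$ the conditions $d \neq d^{\prime}$ and $\im(d) \neq \im(d^{\prime})$ are already equivalent.
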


Now we explain how the groups constructed in Theorem 
\ref{BoRaSewaThm} fit into this framework. 
Without loss of generality suppose that 
$f : \mathbb{N} \rightarrow \mathbb{N}$ 
is an increasing function and $f(n)\geq n$ for all $n\in\mathbb{N}$. 
The technical core of Theorem \ref{BoRaSewaThm} is the following 
number-theoretic observation. 

\begin{propn}[\cite{BoRaSewa} Section 1] \label{BoRaSewaNoProp}
There exist $d,p,q : \mathbb{N} \rightarrow \mathbb{N}$ 
such that for all $n \in \mathbb{N}$: 
\begin{itemize}
\item[(i)] $p : \mathbb{N} \rightarrow \mathbb{N}$ is odd-valued; 
strictly increasing, and $p(1)=1$; 
\item[(ii)] $q(n) \geq 3$ is odd; 
\item[(iii)] $d(n) = p(n)q(n)+2$; 
\item[(iv)] $d(n) \geq 2 f (4p(n+1)+4)$ is odd; 
\item[(v)] For all $i \leq n$, 
$p(n+1)q(i) \not\equiv \pm 1 , \pm 2 \mod d(i)$. 
\end{itemize}
\end{propn}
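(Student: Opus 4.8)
The plan is to build $p$, $q$ and $d$ by a single recursion: start from $p(1)=1$ (forced by (i)) and, at stage $n\geq 1$, first choose $q(n)$ — so that $d(n):=p(n)q(n)+2$, as required by (iii) — and then choose $p(n+1)$. At first sight the conditions are circular, since (iii)--(iv) force $d(n)$ to lie above a quantity involving the yet-to-be-chosen $p(n+1)$, while (iv)--(v) bound $p(n+1)$ above by, and constrain it modulo, the numbers $d(i)$ for $i\leq n$. The way out is the observation that $p(n+1)$ need only be \emph{slightly} larger than $p(n)$: it must simply avoid the finitely many congruence obstructions listed in (v), all of which lie in a window above $p(n)$ whose length is linear in $n$. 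Hence the lower bound one is obliged to impose on $d(n)$ depends only on $p(n)$ and $n$, and one meets it by taking $q(n)$ (odd, $\geq 3$) sufficiently large. I would also carry along the harmless auxiliary invariant $d(i)\geq 16\cdot 2^{i}$, whose only role is to keep $\sum_i 1/d(i)$ small.

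Two elementary facts will be used repeatedly. First, $d(i)=p(i)q(i)+2\equiv 2\pmod{q(i)}$, so $\gcd(q(i),d(i))=\gcd(q(i),2)=1$ as $q(i)$ is odd; thus $q(i)$ is a unit modulo $d(i)$, and condition (v) for a given $i$ says precisely that $p(n+1)$ should avoid the at most four residue classes $\pm q(i)^{-1},\pm 2q(i)^{-1}\pmod{d(i)}$. Second, $d(i)=p(i)q(i)+2$ is odd ($p(i)$ and $q(i)$ both being odd), so the parity clause in (iv) is automatic; and, $d(i)$ being odd, any single residue class modulo $d(i)$ meets an interval of length $L$ in at most $L/(2d(i))+1$ odd integers, since consecutive members of such a class have opposite parities.

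Now the recursive step. Put $p(1)=1$, and suppose $n\geq 1$, that $p(1)<\cdots<p(n)$ are odd with $p(1)=1$, that $q(1),\ldots,q(n-1)$ are odd and $\geq 3$, and that $d(1),\ldots,d(n-1)$ satisfy $d(i)=p(i)q(i)+2\geq 16\cdot 2^{i}$, with (iv) and (v) holding for all indices below $n$. Choose an odd $q(n)\geq 3$ large enough that
\[
d(n):=p(n)q(n)+2\ \geq\ \max\bigl(16\cdot 2^{n},\ 2f\bigl(4p(n)+64(n+1)+4\bigr)\bigr);
\]
this is possible because the right-hand side is a fixed integer while $p(n)q(n)+2$ grows without bound. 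Then (ii), (iii) hold at $n$, $d(n)$ is odd, and $d(n)\geq 16\cdot 2^{n}$. Since $f$ is increasing with $f(m)\geq m$, the set $\{m\geq 1:2f(4m+4)\leq d(n)\}$ is a nonempty, finite initial segment $\{1,\ldots,M_n\}$ of $\mathbb{N}$ (nonempty because $d(n)\geq 2f(8)$, finite because $2f(4m+4)\geq 8m+8$, an initial segment because $f$ is nondecreasing), and taking $m=p(n)+16(n+1)$ in the displayed bound shows $M_n\geq p(n)+16(n+1)$. Finally, consider $I:=(p(n),p(n)+16(n+1)]$, which contains exactly $8(n+1)$ odd integers. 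By the second fact, the number of odd integers of $I$ lying in one of the $\leq 4n$ classes forbidden by (v) is at most
\[
\sum_{i=1}^{n}4\Bigl(\frac{16(n+1)}{2d(i)}+1\Bigr)=32(n+1)\sum_{i=1}^{n}\frac{1}{d(i)}+4n\ \leq\ 2(n+1)+4n,
\]
where $\sum_{i=1}^{n}1/d(i)\leq\sum_{i\geq 1}1/(16\cdot 2^{i})=1/16$. As $8(n+1)>6n+2$ for every $n\geq 1$, there is an odd $p(n+1)\in I$ avoiding all forbidden classes; then $p(n)<p(n+1)\leq p(n)+16(n+1)\leq M_n$, which maintains the monotonicity and oddness of $p$ required by (i), gives (iv) at index $n$ by the definition of $M_n$, and (v) at index $n$ holds by the choice of $p(n+1)$. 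All the inductive hypotheses now hold with $n$ replaced by $n+1$ (the base case $n=1$ being subsumed, the hypotheses below index $1$ being vacuous), so running $n$ through $\mathbb{N}$ produces $p$, $q$, $d$ satisfying (i)--(v); note in particular that $\im(d)$ then consists of odd integers $\geq 32\geq 5$, as is wanted for the definition of $N(d,q)$.

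The only genuine difficulty is dissolving the circular-looking interdependence of (iii)--(v): once one sees that $p(n+1)$ may be taken barely above $p(n)$, the size demanded of $d(n)$ becomes a function of $p(n)$ and $n$ alone, and the geometric growth of the $d(i)$ makes the $O(n)$ congruence constraints of (v) delete only a bounded fraction of the short window in which $p(n+1)$ is sought. Everything else is bookkeeping.
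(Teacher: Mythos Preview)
Your argument is correct. The circularity you identify is indeed the only real issue, and your resolution---bounding $p(n+1)-p(n)$ a priori by $16(n+1)$ via a counting argument over the forbidden residue classes, then choosing $q(n)$ large enough that $d(n)$ dominates $2f(4(p(n)+16(n+1))+4)$---works cleanly. The auxiliary invariant $d(i)\geq 16\cdot 2^i$ is a nice device for keeping $\sum 1/d(i)$ summable, so that the number of excluded odd integers in the window stays linear in $n$ with a small constant. One minor comment: you silently use the standing hypotheses (stated in the paper just before the Proposition) that $f$ is increasing and $f(m)\geq m$; these are indeed in force, but it is worth flagging that you rely on them when you argue that $\{m:2f(4m+4)\leq d(n)\}$ is a finite initial segment.

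As for comparison: the paper does not supply its own proof of this Proposition. It is quoted from \cite{BoRaSewa} Section~1, and the only remark the paper makes is that the proof there ``gives an explicit recursive construction of $d$, $p$ and $q$''. Your proof is exactly such an explicit recursion, so it matches in spirit what the paper alludes to; I cannot compare details against the present paper because none are given here.
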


Moreover the proof of Proposition \ref{BoRaSewaNoProp} 
gives an explicit recursive construction of $d$, $p$ and $q$. 

For a given function $f$, 
let $d,p,q$ be as in the conclusion of 
Proposition \ref{BoRaSewaNoProp}, 
and set $B_f = N(d,q)$, so that $B_f = \langle S(d,q) \rangle$. 
By the conditions of Proposition \ref{BoRaSewaNoProp}, 
$d(n)$ and $q(n)$ are coprime, 
so the projection $\pi_n : B_f \rightarrow \Alt (d(n))$ 
is surjective. 
Theorem \ref{BoRaSewaThm} then follows from the 
next result. 

\begin{propn}[\cite{BoRaSewa} Section 1] \label{BRSkeppropn}
For all $n\in\mathbb{N}$, 
there exists $w_n \in B_{S(d,q)}(4p(n)+4)$ such that for $m\in \mathbb{N}$, 
$\pi_m (w_n) \neq 1$ iff $m=n$. 
\end{propn}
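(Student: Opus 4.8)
The plan is to write $w_n$ down as a short commutator in $s$ and $t$ and then read off its image in each coordinate $\Alt(d(m))$ from an elementary fact about $3$-cycles together with the arithmetic of Proposition \ref{BoRaSewaNoProp}. I would take $w_n = [t, t^{s^{p(n)}}] = t^{-1}s^{-p(n)}t^{-1}s^{p(n)}t s^{-p(n)}t s^{p(n)}$, a word of length $4p(n)+4$ in $S = S(d,q)$, so that $w_n \in B_{S(d,q)}(4p(n)+4)$ is immediate. The coordinate projection $\pi_m$ sends $s \mapsto \alpha_{d(m)}^{q(m)}$ and $t \mapsto \beta_{d(m)} = (1\,2\,3)$, and conjugation by $\alpha_{d(m)}^{j}$ shifts the support of a permutation cyclically by $j$ modulo $d(m)$; hence $\pi_m(w_n) = \bigl[\,\beta_{d(m)},\,\beta_{d(m)}^{\alpha_{d(m)}^{\,j_m}}\,\bigr]$ with $j_m := p(n)q(m)$, that is, the commutator of the $3$-cycle on $\{1,2,3\}$ with the $3$-cycle on the cyclically shifted triple $\{1,2,3\}+j_m \pmod{d(m)}$.

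The heart of the argument is then the elementary observation that, since $d(m) = p(m)q(m)+2 \ge 5$, these two $3$-cycles commute exactly when either $j_m \not\equiv 0,\pm1,\pm2 \pmod{d(m)}$ (disjoint supports) or $j_m \equiv 0 \pmod{d(m)}$ (equal cycles), and fail to commute whenever $j_m \equiv \pm1$ or $\pm2 \pmod{d(m)}$ (distinct $3$-cycles sharing one or two points, which never commute). So $\pi_m(w_n) \ne 1$ if and only if $p(n)q(m) \equiv \pm1$ or $\pm2 \pmod{d(m)}$, and it remains to see that this congruence holds precisely for $m = n$. For $m = n$ it is immediate from $d(n) = p(n)q(n)+2$ (Proposition \ref{BoRaSewaNoProp}(iii)), which forces $p(n)q(n) \equiv -2 \pmod{d(n)}$. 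For $m < n$ it is exactly Proposition \ref{BoRaSewaNoProp}(v) applied with its quantified parameter set equal to $n-1$, so that the constrained index is $i = m \le n-1$; this gives $p(n)q(m) \not\equiv \pm1, \pm2 \pmod{d(m)}$. For $m > n$ I would instead argue from sizes: $p$ is strictly increasing with $p(1)=1$, and $q(m) \ge 3$, so $3 \le q(m) \le p(n)q(m) \le (p(m)-1)q(m) = d(m)-2-q(m) \le d(m)-5$; thus $p(n)q(m)$ reduced modulo $d(m)$ lies in $[3, d(m)-5]$, which avoids $\{0,\pm1,\pm2\}$. Combining the three cases gives $\pi_m(w_n) \ne 1 \iff m = n$.

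The only genuinely delicate point is the choice of word: one needs a commutator short enough to lie in $B_{S(d,q)}(4p(n)+4)$ whose vanishing in coordinate $m$ is governed by exactly the residue of $p(n)q(m)$ modulo $d(m)$. The commutator $[t, t^{s^{p(n)}}]$ works because the relation $d(n) = p(n)q(n)+2$ automatically places $p(n)q(n)$ in the residue set $\{\pm1,\pm2\}$ modulo $d(n)$ at the single coordinate $m = n$, while condition (v) excludes that residue set at coordinates $m < n$ and a crude size estimate excludes it at coordinates $m > n$. The supporting ingredients --- the commutation lemma for $3$-cycles and the two elementary estimates --- are routine; note in particular that the case $m > n$ is handled not by condition (v), which only constrains indices $i \le n$, but by the size estimate above.
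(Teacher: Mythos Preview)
Your proof is correct and is essentially the argument of Bou-Rabee and Seward that the paper cites without reproducing: the word $w_n=[t,t^{s^{p(n)}}]$ has length $4p(n)+4$, its image in coordinate $m$ is the commutator of $(1\,2\,3)$ with its shift by $p(n)q(m)$ modulo $d(m)$, and the three cases $m=n$, $m<n$, $m>n$ are handled exactly as you describe via condition~(iii), condition~(v) (with index shifted by one), and the crude size estimate. There is nothing to add; the paper itself gives no proof beyond the citation to \cite{BoRaSewa}.
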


Since $\pi_n (B_f) = \Alt (d(n))$, 
and by the simplicity of $\Alt (d(n))$, 
the normal closure of $w_n$ in $B_f$ 
is a subgroup $H_n$ of $B_f$ isomorphic to $\Alt (d(n))$. 
Now let $m \in \mathbb{N}$ and let 
$Q$ be a finite group and let $\phi : B_f \rightarrow Q$ 
be a homomorphism, such that $\pi \mid_{B_{S(d,q)}(m)}$ is 
injective. Supposing $m \geq 8$, 
let $n \in \mathbb{N}$ be such that $4p(n)+4 \leq m \leq 4p(n+1)+4$. 
Thus $\pi(w_n) \neq 1$, 
so $\pi \mid_{H_n}$ is non-trivial, 
and by simplicity of $H_n$ is injective. 
By Proposition \ref{BoRaSewaNoProp} (iv), 
\begin{equation*}
\lvert Q \rvert \geq \lvert H_n \rvert 
= d(n)! / 2 \geq f(4p(n+1)+4)
\end{equation*}
so $\mathcal{R}_{B_f} ^{S(d,q)} (m) \geq f(4p(n+1)+4) \geq f(m)$. 

We now complete our bound on the LEF growth of $B_f$. 
Combined with Theorem \ref{BoRaSewaThm} above, 
this completes the proof of Theorem \ref{MainThm}. 

\begin{thm} \label{GenNeumUBThm}
For any functions $d,q : \mathbb{N} \rightarrow \mathbb{N}$ satisfying 
$\big( d(n),q(n) \big) = 1$ for all $n \in \mathbb{N}$, 
\begin{equation*}
\mathcal{L}_{N(d,q)} ^{S(d,q)} (l) \preceq \exp (\exp(l)). 
\end{equation*}
\end{thm}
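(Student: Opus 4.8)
The plan is to bound $\mathcal{L}_{N(d,q)}^{S(d,q)}(l)$ by constructing, for each $l$, a small finite permutation group into which $B_{S(d,q)}(l)$ locally embeds, using the machinery of Lemma \ref{permisolem} together with the analysis of Schreier graphs of cyclic-times-$3$-cycle generating pairs from Example \ref{ncyc3cycex}. First I would fix $l$ and observe that $N(d,q)$ acts on $\Omega = \bigsqcup_{n} [d(n)]$ (the disjoint union of the supports of the $\Alt(d(n))$), and that the Schreier graph $\Schr(N(d,q),\Omega,\mathbf{S})$ is the disjoint union over $n$ of the Schreier graphs $G_n := \Schr(\langle \alpha_{d(n)}^{q(n)},\beta_{d(n)}\rangle,[d(n)],(\alpha_{d(n)}^{q(n)},\beta_{d(n)}))$. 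Since $(d(n),q(n))=1$, $\alpha_{d(n)}^{q(n)}$ is a single $d(n)$-cycle and $\beta_{d(n)}$ is a $3$-cycle, so each $G_n$ is exactly of the shape analysed in Example \ref{ncyc3cycex}: a $1$-coloured $d(n)$-cycle together with a $2$-coloured $3$-cycle on three vertices (and $2$-coloured loops elsewhere), whose isomorphism type is pinned down by the lengths of the three $1$-coloured arcs between those vertices and the orientation of the $3$-cycle.

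The key step is to replace each $G_n$ by a graph with a \emph{bounded} number of vertices that is locally isomorphic to $G_n$ at radius $\lceil 3l/2 \rceil$. By Example \ref{ncyc3cycex} (applied with radius $\lceil 3l/2\rceil$ in place of $l$), balls of radius $\lceil 3l/2\rceil$ only detect arc-lengths up to $3l$; so I may shorten every arc of length exceeding $3l+1$ down to exactly $3l+1$ without changing the set of isomorphism types of balls of that radius. This produces a graph $G_n'$ with at most $3(3l+1) \le 9l+3$ vertices that is locally isomorphic to $G_n$ at radius $\lceil 3l/2\rceil$, and $G_n'$ is itself the Schreier graph of some transitive pair $(s_1',s_2')$ in $\Sym(m_n)$ with $m_n \le 9l+3$. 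Crucially, there are only boundedly many — say at most $N(l)$, with $N(l)$ polynomial in $l$ — possible isomorphism types of such $G_n'$ (finitely many arc-length triples summing to $\le 9l+3$, times two orientations), so as $n$ ranges over $\mathbb{N}$ only finitely many distinct $G_n'$ occur. Let $H_1,\ldots,H_r$ (with $r \le N(l)$) be one representative transitive permutation group for each occurring type, sitting in $\Sym(m)$ with $m \le 9l+3$, and form $\Gamma' = \langle (s_1^{(1)},\ldots,s_1^{(r)}), (s_2^{(1)},\ldots,s_2^{(r)}) \rangle \le \prod_{j=1}^r \Sym(m) \le \Sym(rm)$. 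Then $\Schr(\Gamma',\bigsqcup_j [m], \mathbf{S}')$ is the disjoint union of the $H_j$-Schreier graphs, which has the same set of ball-isomorphism-types of radius $\lceil 3l/2\rceil$ as $\Schr(N(d,q),\Omega,\mathbf{S})$ (the action of $N(d,q)$ is faithful, being a subdirect product of the faithful $\Alt(d(n))$-actions). Lemma \ref{permisolem} then yields a local embedding $B_{\mathbf{S}}(l) \to \Gamma'$, so
\begin{equation*}
\mathcal{L}_{N(d,q)}^{S(d,q)}(l) \le |\Gamma'| \le (rm)! \le \big((9l+3)N(l)\big)! \le \exp(\exp(Cl))
\end{equation*}
for a suitable constant $C$, where the last inequality uses that $n! \le \exp(n\log n)$ and $N(l)$ grows polynomially.

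I expect the main obstacle to be bookkeeping rather than any conceptual difficulty: one must be careful that Example \ref{ncyc3cycex} is stated for radius $l$ with the hypothesis $n \ge 6l+3$ (or more generally $n \ge (2l+1)k$), and here I need it at radius $\lceil 3l/2\rceil$, so the "long arc" threshold and the bound $9l+3$ must be tracked consistently — and one must handle the (harmless) cases where $d(n)$ is small enough that no arc is long, in which case $G_n$ is already bounded and can simply be kept as is. A secondary point requiring care is the faithfulness hypothesis in Lemma \ref{permisolem}: $N(d,q)$ acts faithfully on $\Omega$ because it embeds in $\prod_n \Alt(d(n))$ by construction, and $\Gamma'$ acts faithfully on $\bigsqcup_j[m]$ by definition as a subgroup of the relevant symmetric group, so this is automatic. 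Everything else — the count $N(l)$, the factorial bound, the double-exponential — is routine once the local-isomorphism replacement is in place.
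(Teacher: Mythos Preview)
Your argument is correct and in fact yields a sharper bound than the paper's, though you only claim what the theorem states. The paper proceeds differently: it first chooses a finite set $A_l$ of coordinates, with $\lvert A_l\rvert \leq \lvert B_S(3l)\rvert^2 \preceq \exp(l)$, such that the product of projections $\prod_{k\in A_l}\pi_k$ is injective on $B_S(3l)$; then it applies Example~\ref{ncyc3cycex} coordinate-by-coordinate to shrink each factor $\Alt(d(k))$ down to $\Alt(d'_l(k))$ with $d'_l(k)\leq 18l+3$, and bounds the product $G_l=\prod_{k\in A_l}\Alt(d'_l(k))$ by $((18l+3)!)^{\lvert A_l\rvert}\preceq \exp(\exp(l))$. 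Your route instead works globally via Lemma~\ref{permisolem}: you observe that the components $G_n$ of the full Schreier graph on $\bigsqcup_n[d(n)]$ fall into only polynomially many local isomorphism types at radius $\lceil 3l/2\rceil$, take one representative of each, and locally embed into the resulting subgroup of $\Sym(rm)$ with $rm$ polynomial in $l$. The paper's separation step is what produces the exponentially large index set $A_l$, and hence the double exponential; your argument avoids that step entirely and actually gives $\mathcal{L}_{N(d,q)}^{S(d,q)}(l)\preceq \exp(l^C)$ for some $C>0$, which is strictly stronger. The only places to tighten are cosmetic: the ``long arc'' threshold should be $2\lceil 3l/2\rceil+1$ rather than $3l+1$ (affecting constants only), and the $m_j$ need not all be equal, so write $\prod_j\Sym(m_j)\leq\Sym(\sum_j m_j)$.
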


\begin{proof}
There exists $A_l \subseteq \mathbb{N}$ with 
$\lvert A_l \rvert \leq \lvert B_S (3l) \rvert^2 \preceq \exp(l)$ 
such that, for all $w(s,t),v(s,t) \in B_S (3l)$, 
if $w(s,t)\neq v(s,t)$, then: 
\begin{center}
$\pi_k(w(s,t)) = w(s_k,t_k)\neq v(s_k,t_k) = \pi_k(v(s,t))$
\end{center}
for some $k \in A_l$. Then: 
\begin{equation*}
(\prod_{k \in A_l} \pi_k) : B_f \rightarrow \prod_{k \in A_l} \Alt (d(k))
\end{equation*}
is a homomorphism whose restriction to $B_S(3l)$ is injective. 
For $k \in A_l$, let $d_l ^{\prime} (k) = 18l+3$ if 
$d(k) \geq 18l+3$ and $d_l ^{\prime} (k) = d(k)$ otherwise. 
By coprimality of $d(k)$ and $q(k)$, 
$s_k$ is a $d(k)$-cycle. 
Thus by Example \ref{ncyc3cycex}, 
there is a local embedding 
$\phi_{l,k} : B_{\lbrace s_k , t_k \rbrace} (2l) \rightarrow 
\Alt \big( d_l ^{\prime} (k) \big)$. As a result, 
\begin{equation*}
\big(\prod_{k \in A_l} (\phi_{l,k} \circ \pi_k)\big) : 
B_{S_f} (2l) \rightarrow G_l = \prod_{k \in A_l} \Alt (d_l ^{\prime}(k))
\end{equation*}
is a local embedding, and: 
\begin{align*}
\lvert G_l \rvert \leq ((18l+3)!)^{\lvert A_l \rvert}
\preceq (l!)^{\exp(l)} \leq \exp (\exp(l)l\log l) 
\leq \exp(\exp(2l))
\end{align*}
so $\mathcal{L}_{B_f} ^{S_f} (2l) \leq \lvert G_l \rvert 
\preceq \exp (\exp(l))$. 
\end{proof}

\begin{proof}[Proof of Theorem \ref{BfBoundThm}]
Apply the conclusion of Theorem \ref{GenNeumUBThm} 
to $B_f = N(d,q)$ for $d,q$ as in Proposition \ref{BoRaSewaNoProp}. 
\end{proof}

\subsection{Presentations of Neumann's Groups}

There are only countably many finitely presented groups, 
so almost none of the groups $N(d,1)$ are finitely presented. 
By comparing homomorphisms and partial homomorphisms from $N(d,1)$ 
to finite symmetric groups, 
much in the style of the proof of Theorem \ref{MainThm}, 
we show that in fact none of them are. 
Two proofs of this result already appear in \cite{BaumMill}: 
one using HNN extensions; the other based on homological considerations. 

We let $s,t\in N(d,1)$ and $\alpha_d , \beta_d \in \Sym(d)$ 
be as in Subsection \ref{BoRaSewaSubsect} 
and continue to write $S=\lbrace s,t\rbrace$. 

\begin{lem} \label{NeumQuotLem}
There is an epimorphism $\phi: N(d,1) \rightarrow B_{\infty}$, 
where $B_{\infty}$ is a group admitting a short exact sequence 
\begin{equation*}
1 \rightarrow A_{\infty} \rightarrow B_{\infty} \rightarrow \mathbb{Z} \rightarrow 1
\end{equation*}
(here $A_{\infty}$ is the group of finitely supported even permutations 
of the set $\mathbb{Z}$, an infinite simple group). 
Moreover $\ker (\phi) = \bigoplus_{n \in \mathbb{N}} \Alt (d(n))$. 
\end{lem}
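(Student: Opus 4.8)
The plan is to realise $B_{\infty}$ concretely as a group of permutations of $\mathbb{Z}$, and then to define $\phi$ on the generators $s,t$ by sending them to permutations which ``act like'' $\alpha_{d(n)},\beta_{d(n)}$ simultaneously on an exhausting family of finite arcs of $\mathbb{Z}$. Specifically, first I would let $\sigma \in \Sym(\mathbb{Z})$ be the shift $x \mapsto x+1$ and let $\tau = (1\ 2\ 3) \in \FSym(\mathbb{Z})$, and set $B_{\infty} = \langle \sigma,\tau \rangle$. The subgroup $A_{\infty} := B_{\infty} \cap \FAlt(\mathbb{Z})$ is normal in $B_{\infty}$ (being the kernel of the composite $B_{\infty} \hookrightarrow \Sym(\mathbb{Z}) \to \Sym(\mathbb{Z})/\FAlt(\mathbb{Z})$), it contains $\tau$, and the quotient $B_{\infty}/A_{\infty}$ is generated by the image of $\sigma$; since $\sigma$ has infinite order and no power of $\sigma$ is finitely supported, the quotient is infinite cyclic, giving the short exact sequence. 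The identification $A_{\infty} = \FAlt(\mathbb{Z})$ follows because $\langle \sigma^{-k}\tau\sigma^k : k \in \mathbb{Z}\rangle$ is all of $\FAlt(\mathbb{Z})$ (conjugating a $3$-cycle by the shift produces every $3$-cycle $(j\ j+1\ j+2)$, and these generate $\FAlt(\mathbb{Z})$), and this is visibly contained in $A_{\infty}$.

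Next I would construct $\phi$. The natural choice is to ``linearise'' the product $\prod_n \Alt(d(n))$ along a partition of $\mathbb{Z}$ into consecutive blocks $I_n$ with $\lvert I_n\rvert = d(n)$ — but one must be careful, since $s = (\alpha_{d(n)}^{1})_n$ acts on the $n$-th block as a $d(n)$-cycle, whereas $\sigma$ acts on $\mathbb{Z}$ as a single bi-infinite cycle, so these do not literally agree. The cleaner approach is the opposite direction: define $\phi$ directly as the homomorphism $N(d,1) \to B_{\infty}$ induced by $s \mapsto \sigma$, $t \mapsto \tau$, and check this is well-defined by verifying that every relation holding between the generators $(\alpha_{d(n)})_n, (\beta_{d(n)})_n$ of $N(d,1) \leq \prod_n \Alt(d(n))$ already holds between $\sigma,\tau$. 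Concretely, if $w(s,t) = 1$ in $N(d,1)$, then $w(\alpha_{d(n)},\beta_{d(n)}) = 1$ in $\Sym(d(n))$ for every $n$; since $d$ is strictly increasing and unbounded, the local action of $w(\alpha_{d(n)},\beta_{d(n)})$ near the point $1$ (i.e. on $B_{\{\alpha_{d(n)},\beta_{d(n)}\},1}(\lvert w\rvert)$) stabilises as $n \to \infty$ and matches the local action of $w(\sigma,\tau)$ on the corresponding ball in $\mathbb{Z}$ (using Example \ref{ncyc3cycex}, or directly: for $d(n) > 2\lvert w\rvert$ the Schreier ball of radius $\lvert w\rvert$ around $1 \in [d(n)]$ is a path, isomorphic to the corresponding ball in the $\sigma$-orbit $\mathbb{Z}$, with the $3$-cycle of $\beta_{d(n)}$ sitting at the same place). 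Hence $w(\sigma,\tau)$ fixes every point of $\mathbb{Z}$, i.e. $w(\sigma,\tau)=1$, so $\phi$ is well-defined by Lemma \ref{extendlem}; it is surjective since $\sigma,\tau$ generate $B_{\infty}$.

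Finally I would identify $\ker(\phi)$. One inclusion is immediate: for each $m$, the element $e_m \in N(d,1)$ supported only in the $m$-th coordinate (an element of $\Alt(d(m))$, which lies in $N(d,1)$ because $\Alt(d(m)) \vartriangleleft N(d,1)$ by Neumann's theorem) is a finitely supported permutation of $\bigsqcup_n [d(n)]$ involving only the block $[d(m)]$, so when we track it through the Schreier-graph picture it acts trivially far from a bounded region; expressing it as a word $w$ in $s,t$, the same word $w(\sigma,\tau)$ acts on $\mathbb{Z}$ in a bounded region and — here I must check — acts trivially, because the $m$-th block ``has moved off to infinity'' in the limit. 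Making this precise is the main obstacle: the bookkeeping matching the coordinate decomposition $\bigoplus_n \Alt(d(n))$ of $\ker(\phi)$ with the claim that all of these elements die in $B_\infty$, and conversely that nothing else dies. For the converse inclusion $\ker(\phi) \subseteq \bigoplus_n \Alt(d(n))$, I would argue contrapositively: if $g \in N(d,1)$ is not in $\bigoplus_n \Alt(d(n))$, then it has a nontrivial ``limit action'' — i.e. for infinitely many $n$, the local action of $g$ at $1 \in [d(n)]$ is a fixed nontrivial pattern — and this pattern is exactly the local action of $\phi(g)$ near some point of $\mathbb{Z}$, so $\phi(g) \neq 1$. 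The key technical point throughout is that the $n$-cycle-and-$3$-cycle Schreier graphs of $\Alt(d(n))$ converge, in the space of rooted coloured graphs, to the Schreier graph of $B_{\infty}$ acting on $\mathbb{Z}$ (a path with a single $3$-cycle), which is precisely the content of Example \ref{ncyc3cycex} taken to the limit; once that convergence is set up, both inclusions for $\ker(\phi)$ fall out, and the stated short exact sequence has already been verified above.
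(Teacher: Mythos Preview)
The paper does not actually prove this lemma: its entire proof reads ``This is sketched in Problem 35 from Chapter III.B of \cite{delaHar}.'' So there is no approach in the paper to compare against; your plan is a genuine attempt to fill in what the paper outsources.

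Your plan is essentially correct and is the natural argument. Defining $B_{\infty} = \langle \sigma,\tau\rangle \leq \Sym(\mathbb{Z})$ with $\sigma$ the shift and $\tau = (1\;2\;3)$, and sending $s \mapsto \sigma$, $t \mapsto \tau$, is exactly right; the Schreier-graph convergence you describe (and which the paper codifies in Example \ref{ncyc3cycex} and Lemma \ref{permisolem}) is the correct mechanism both for well-definedness of $\phi$ and for computing $\ker(\phi)$. Two small points to tighten. First, when checking well-definedness you write that the local action ``near the point $1$'' stabilises; to conclude $w(\sigma,\tau)=1$ you need this for \emph{every} point $k \in \mathbb{Z}$, not just $k=1$. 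This is fine: for $d(n)$ large compared to $\lvert w\rvert$ and $\lvert k\rvert$, the ball of radius $\lvert w\rvert$ around $k$ in $\mathbb{Z}$ matches a ball in $[d(n)]$, so $w(\sigma,\tau)$ fixes $k$. Second, your appeal to Lemma \ref{extendlem} is slightly off, since that lemma is stated for \emph{finite} presentations and $N(d,1)$ has none; but you only need the elementary fact that a map on generators extends to a homomorphism iff every relation maps to $1$, which holds for any presentation. The kernel computation you worry about is in fact straightforward once the Schreier-graph picture is in place: for $d(n)$ large enough relative to $\lvert w\rvert$, the graphs $\Schr(\Alt(d(n)),[d(n)],(\alpha_{d(n)},\beta_{d(n)}))$ and $\Schr(B_{\infty},\mathbb{Z},(\sigma,\tau))$ are locally isomorphic at radius $\lvert w\rvert$, so by the argument of Lemma \ref{permisolem}, $w(\alpha_{d(n)},\beta_{d(n)})=1$ iff $w(\sigma,\tau)=1$; both inclusions for $\ker(\phi)$ follow at once.
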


\begin{proof}
This is sketched in Problem 35 from Chapter III.B of \cite{delaHar}. 
\end{proof}

\begin{propn} \label{NeumQuotProp}
For $m \geq 4$ even, $N(d,1)$ admits no quotient isomorphic to $\Sym(m)$. 
\end{propn}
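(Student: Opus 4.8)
The plan is to suppose for contradiction that there is an epimorphism $\psi : N(d,1) \twoheadrightarrow \Sym(m)$ with $m \geq 4$ even, and to analyse the kernel. By Lemma \ref{NeumQuotLem}, the only proper normal subgroups of $N(d,1)$ with infinite simple quotient-behaviour arise from the structure $1 \to \bigoplus_n \Alt(d(n)) \to N(d,1) \to B_{\infty} \to 1$; more precisely, I would first catalogue the normal subgroups of $N(d,1)$. The base group $K = \bigoplus_{n} \Alt(d(n))$ is a direct sum of non-isomorphic finite simple groups (the $d(n)$ being strictly increasing and at least $5$), so every normal subgroup of $N(d,1)$ contained in $K$ is a sub-sum $\bigoplus_{n \in I} \Alt(d(n))$ for some $I \subseteq \mathbb{N}$, and these are permuted trivially by the $\mathbb{Z}$-action since the $\mathbb{Z}$ generated by the image of $s$ acts on each factor $\Alt(d(n))$ individually (by conjugation by a power of $\alpha_{d(n)}$, an \emph{inner} automorphism-up-to-the-outer-cyclic-action inside the relevant symmetric group). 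The upshot I want is: a normal subgroup $M \trianglelefteq N(d,1)$ either contains $K$, or is contained in $K$ and hence equals $\bigoplus_{n \in I}\Alt(d(n))$, in which case $N(d,1)/M$ surjects onto $N(d,1)/K = B_{\infty}$, which is infinite.

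With that structural fact in hand, I apply it to $M = \ker \psi$, which has finite index $m! < \infty$. Since $N(d,1)/M \cong \Sym(m)$ is finite, $M$ cannot be contained in $K$ (that would force $\Sym(m)$ to surject onto the infinite group $B_{\infty}$), so $M \supseteq K$. But then $\psi$ factors through $N(d,1)/K \cong B_{\infty}$, giving an epimorphism $\bar\psi : B_{\infty} \twoheadrightarrow \Sym(m)$. Now $B_{\infty}$ has the normal subgroup $A_{\infty}$ (finitely supported even permutations of $\mathbb{Z}$) with $B_{\infty}/A_{\infty} \cong \mathbb{Z}$, and $A_{\infty}$ is simple. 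So $\bar\psi(A_{\infty})$ is either trivial or isomorphic to $A_{\infty}$; the latter is impossible as $A_{\infty}$ is infinite and $\Sym(m)$ is finite, so $\bar\psi(A_{\infty}) = 1$ and $\bar\psi$ factors through $B_{\infty}/A_{\infty} \cong \mathbb{Z}$. Thus $\Sym(m)$ is a quotient of $\mathbb{Z}$, i.e. cyclic; but $\Sym(m)$ for $m \geq 4$ is non-abelian (indeed non-cyclic already for $m \geq 3$), a contradiction. Hence no such $\psi$ exists.

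The main obstacle — and the step deserving the most care — is establishing the structural claim that every normal subgroup of $N(d,1)$ either contains $K$ or lies inside $K$. The containment-in-$K$ case is the delicate one: given $M \trianglelefteq N(d,1)$ with $M \not\subseteq K$, I must show $M \supseteq K$. Pick $g = (a, z) \in M$ with $z \neq 0 \in \mathbb{Z}$ under the identification $N(d,1)/K \cong B_{\infty} \to \mathbb{Z}$ — wait, more carefully: $M$'s image in $B_{\infty}$ is a nontrivial normal subgroup of $B_{\infty}$, hence (since $A_{\infty}$ is simple and $B_{\infty}/A_{\infty}\cong\mathbb Z$) contains $A_{\infty}$ or maps onto a finite-index subgroup of $\mathbb{Z}$. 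In either case $M$ projects nontrivially to $B_{\infty}$, and I then use commutators $[m, x]$ for $m \in M$ and $x$ ranging over the copies of $\Alt(d(n))$ inside $K$: because the projection of $M$ hits elements acting nontrivially (via the $\mathbb{Z}$-shift that is B.H. Neumann's key device) these commutators generate each factor $\Alt(d(n))$ — here one invokes the simplicity of $\Alt(d(n))$ and the fact that conjugation by the relevant element of $M$ is a \emph{non-inner} relabelling on enough factors. Alternatively, and perhaps more cleanly, I would route this through Lemma \ref{NeumQuotLem} directly: any finite quotient of $N(d,1)$ factors through $N(d,1)/K_I$ for some cofinite-complement $I$ (since a finite quotient kills all but finitely many $\Alt(d(n))$ and must in fact kill \emph{all} of $K$ by the simplicity argument above once we know the surviving factors would again produce an infinite quotient via $B_\infty$), and then reduce to showing $B_\infty$ has no quotient isomorphic to $\Sym(m)$ for $m\ge 4$ even, which is the short argument of the second paragraph. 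I will structure the writeup around that reduction, as it isolates exactly one nontrivial input (the normal subgroup structure of $B_\infty$ and $\Alt(d(n))$) and keeps the symmetric-group combinatorics — note $\Sym(m)$ is \emph{centreless} for $m \geq 3$ and non-cyclic for $m \geq 3$, which is all that is needed — to a single line.
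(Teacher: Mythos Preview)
Your overall strategy --- show $K = \bigoplus_n \Alt(d(n)) \subseteq \ker\psi$, then factor $\psi$ through $B_{\infty}$, then through $\mathbb{Z}$, reaching a contradiction --- is exactly the paper's, and your execution of the last two steps is correct. The gap is in the first step. Your structural dichotomy ``every normal subgroup $M \trianglelefteq N(d,1)$ either contains $K$ or lies inside $K$'' is \emph{false}: take $M = \ker\pi_{n_0}$ for any fixed $n_0$, where $\pi_{n_0}: N(d,1) \to \Alt(d(n_0))$ is the coordinate projection. This $M$ has finite index, so $M \not\subseteq K$ (as $N(d,1)/K \cong B_{\infty}$ is infinite), yet $\Alt(d(n_0)) \subseteq K$ while $\Alt(d(n_0)) \not\subseteq M$, so $M \not\supseteq K$. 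Your fallback claim that ``any finite quotient kills all of $K$'' fails for the same reason: $\Alt(d(n_0))$ is itself a finite quotient of $N(d,1)$ on which $K$ does not die. The commutator manoeuvres you sketch in the third paragraph cannot rescue the dichotomy, since the counterexample above is perfectly compatible with them.

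What you are missing is the one hypothesis you never invoke: $m$ is \emph{even}, whereas every $d(n)$ is \emph{odd}. The paper's argument for $K \subseteq \ker\psi$ is direct and short. Each $\Alt(d(n))$ is simple and normal in $N(d,1)$, so $\psi\big(\Alt(d(n))\big)$ is a normal subgroup of $\Sym(m)$, hence either trivial or isomorphic to $\Alt(d(n))$; in the latter case $\Alt(d(n))$ would be a composition factor of $\Sym(m)$. But the composition factors of $\Sym(m)$ are cyclic of prime order together with (for $m \geq 5$) $\Alt(m)$, and since $d(n) \geq 5$ is odd while $m$ is even, $\Alt(d(n))$ is not among them. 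Hence $\psi\big(\Alt(d(n))\big)=1$ for every $n$, giving $K \subseteq \ker\psi$, after which your argument goes through verbatim.
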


\begin{proof} 
Suppose to the contrary that we have an epimorphism 
$\pi: N(d,1) \rightarrow \Sym(m)$. 
We apply Lemma \ref{NeumQuotLem}: 
$\bigoplus \Alt(d(n))$ lies in the kernel of $\pi$, 
for if not, $\Sym(m)$ would have some $\Alt(d(n))$ as a composition factor 
(since each $\Alt(d(n))$ is normal in $N(d,1)$), 
and this is not the case, as $d(n)$ is odd. 
Thus $\pi$ descends to an epimorphism 
$\overline{\pi} : B_{\infty} \rightarrow \Sym(m)$. 
$A_{\infty} \vartriangleleft B_{\infty}$ 
is infinite simple 
so $\overline{\pi}$ descends 
to an epimorphism $\mathbb{Z} \rightarrow \Sym(m)$, a contradiction. 
\end{proof}

\begin{propn} \label{NeumPHProp}
For all $l \in \mathbb{N}$ there exists $m_0 \in \mathbb{N}$ 
such that for all $m \geq m_0$ even, 
there is a partial homomorphism 
$\pi_m : B_{\lbrace s,t \rbrace} (l) \rightarrow \Sym(m)$ 
extending $s \mapsto \alpha_m$, $t \mapsto \beta_m$, 
whose image generates $\Sym(m)$. 
\end{propn}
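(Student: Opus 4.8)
The plan is to reduce, via a projection onto a large finite alternating group, to the Schreier-graph comparison already carried out in Example \ref{ncyc3cycex}. Fix $l$ and put $r = \lceil 3l/2 \rceil$. Choose $n_1$ so large that $d(n_1)$ exceeds a threshold depending only on $l$ (to be fixed below); this is possible since $d$ is strictly increasing, and we write $N = d(n_1)$. As $N$ is odd, the coordinate projection $\pi_{n_1} : N(d,1) \twoheadrightarrow \Alt(N)$, $s \mapsto \alpha_N$, $t \mapsto \beta_N$, is surjective, and it carries $B_{\{s,t\}}(l)$ into $B_{\{\alpha_N,\beta_N\}}(l)$. Now $\Schr(\Alt(N),[N],(\alpha_N,\beta_N))$ is a $1$-coloured $N$-cycle carrying a $2$-coloured $3$-cycle on the three consecutive vertices $1,2,3$ and a $2$-coloured loop at every other vertex; for any even $m$, the graph $\Schr(\Sym(m),[m],(\alpha_m,\beta_m))$ has exactly this shape with $N$ replaced by $m$.

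Exactly as in Example \ref{ncyc3cycex}, once $N$ and $m$ both exceed a bound depending only on $l$ (say both at least $m_0 := 6r+3$, which also fixes the threshold required on $d(n_1)$), the radius-$r$ balls occurring in either of these two Schreier graphs comprise the ``generic'' type --- a segment of $1$-coloured line with a $2$-coloured loop at each vertex --- together with finitely many further types, all occurring at vertices within distance $r$ of $\{1,2,3\}$, and this finite list is the same for both graphs; hence the two Schreier graphs are locally colour-isomorphic at radius $r \ge \lceil 3l/2 \rceil$. Both actions are faithful and both generating pairs generate their respective groups (here one uses that $N$ is odd and $m$ is even), so Lemma \ref{permisolem} produces a local embedding $\psi : B_{\{\alpha_N,\beta_N\}}(l) \to \Sym(m)$ extending $\alpha_N \mapsto \alpha_m$ and $\beta_N \mapsto \beta_m$.

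Finally set $\pi_m = \psi \circ \pi_{n_1}|_{B_{\{s,t\}}(l)}$. If $g,h,gh \in B_{\{s,t\}}(l)$, then $\pi_{n_1}(g)$, $\pi_{n_1}(h)$ and $\pi_{n_1}(g)\pi_{n_1}(h) = \pi_{n_1}(gh)$ all lie in $B_{\{\alpha_N,\beta_N\}}(l)$, so the partial-homomorphism property of $\psi$ gives $\pi_m(gh) = \pi_m(g)\pi_m(h)$; thus $\pi_m$ is a partial homomorphism $B_{\{s,t\}}(l) \to \Sym(m)$ extending $s \mapsto \alpha_m$, $t \mapsto \beta_m$. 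Its image contains $\alpha_m$ and $\beta_m$, and the conjugates $\alpha_m^{-k}\beta_m\alpha_m^{k}$ run through all cyclically consecutive $3$-cycles $(i\ i{+}1\ i{+}2)$; these generate $\Alt(m)$, and together with the odd permutation $\alpha_m$ (odd because $m$ is even) they generate $\Sym(m)$, so the image of $\pi_m$ generates $\Sym(m)$, as required. The one step with real content is the Schreier-graph comparison of the decorated $N$- and $m$-cycles, which is routine and runs exactly parallel to Example \ref{ncyc3cycex}; the factorisation through $\Alt(N)$ and the check that $\pi_m$ is a partial homomorphism are bookkeeping.
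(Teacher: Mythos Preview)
Your proof is correct and follows essentially the same route as the paper's: project onto a large alternating factor $\Alt(N)$ with $N\in\im(d)$, then compare the decorated-cycle Schreier graphs of $(\alpha_N,\beta_N)$ and $(\alpha_m,\beta_m)$ as in Example~\ref{ncyc3cycex} to obtain a local embedding $B_{\{\alpha_N,\beta_N\}}(l)\to\Sym(m)$, and compose. The only substantive difference is in the verification that $\langle\alpha_m,\beta_m\rangle=\Sym(m)$: the paper invokes Jordan's theorem after checking primitivity via a block-system argument, whereas you give the more direct observation that the $\alpha_m$-conjugates of $\beta_m$ run through all consecutive $3$-cycles (hence generate $\Alt(m)$) and that $\alpha_m$ is odd. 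Your argument is slightly more elementary; the paper's has the minor advantage of applying verbatim when the $3$-cycle sits on non-consecutive points. You are also a bit more careful with the radius bookkeeping, working at Schreier radius $r=\lceil 3l/2\rceil$ so that Lemma~\ref{permisolem} delivers a local embedding on $B(l)$ rather than $B(2l/3)$.
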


\begin{proof}
This follows from the argument in Example \ref{ncyc3cycex}. 
Set $m_0=6l+3$, $m^{\prime}\geq m\geq m_0$ with $m^{\prime}\in\im(d)$, 
and let $\rho : N(d,1) \rightarrow \Alt(m^{\prime})$ 
be projection. 
Then $\rho (s)=\alpha_{m^{\prime}}$ and $\rho(t)=\beta_{m^{\prime}}$. 
By Example \ref{ncyc3cycex} there is a local embedding 
$\phi:B_{\lbrace \alpha_{m^{\prime}},\beta_{m^{\prime}} \rbrace} (l) 
\rightarrow \Sym(m)$ extending 
$\alpha_{m^{\prime}} \mapsto \alpha_m$, 
$\beta_{m^{\prime}} \mapsto \beta_m$. 
The composition $\phi \circ \rho|_{B_S(l)}$ 
will be the desired partial homomorphism; 
it suffices to show that its image generates $\Sym(m)$. 
Since $\beta_m$ is a $3$-cycle, by Jordan's Theorem on permutation groups, 
we need only show that $\langle \alpha_m , \beta_m \rangle$ is primitive. 
Consider a block-system on $\lbrace 1,\ldots,m\rbrace$ 
consisting of blocks of size $\geq 2$; 
the action of $\beta_m$ forces $1,2,3$ to lie in the same block $B$. 
The action of $\alpha_m$ then shows that if $k,k+1 \in B$, 
then $k+2 \in B$ also, hence $B$ is the only block. 
\end{proof}

\begin{proof}[Proof of Theorem \ref{NeumFPThm}]
Suppose $N(d,1) = \langle s,t \rangle$ admits a finite presentation 
$\langle s,t \mid R \rangle$. 
Then there exists $l_0 \in \mathbb{N}$ such that 
$R \subseteq B_{\lbrace s,t \rbrace} (l_0)$ (in $F(s,t)$). 
By Proposition \ref{NeumPHProp}, 
there is a partial homomorphism 
$\pi_m : B_{\lbrace s,t \rbrace} (l_0) \rightarrow \Sym(m)$ 
for all sufficiently large even $m$, whose image generates $\Sym(m)$. 
By Lemma \ref{LEFRFlem}, $\pi_m$ extends to an epimorphism 
$N(d,1) \rightarrow \Sym(m)$, 
contradicting Proposition \ref{NeumQuotProp}. 
\end{proof}

\section{Metric approximations of groups} \label{MetricApproxSect}

There is a wide range of approximation properties strictly weaker than LEF. 
Many of these involve a choice of bi-invariant metric on the approximating groups: 
given a finite subset $F$ of the group $\Gamma$ we have, 
instead of an injective partial homomorphism $\pi : F \rightarrow Q$ 
to a finite group $Q$, an \emph{approximation} $\pi : F \rightarrow Q$ 
of $F$ into a finite (or compact) metric group $(Q,d)$. 
Injectivity of $\pi$ is replaced by the condition that distinct points of $F$ 
are mapped to elements of $Q$ far apart with respect to $d$, 
and the partial homomorphism property 
is replaced by the requirement that whenever $g,h,gh \in F$, 
$d\big( \pi(gh),\pi(g)\pi(h) \big)$ is small. 

Let $\Gamma$ be finitely generated by the set $S$. 
A general framework for quantifying metric approximations of finitely generated groups 
is described in \cite{ArzChe}, wherein are introduced, in particular, 
functions $\mathcal{D}_{\Gamma,S} ^{\sof}$, $\mathcal{D}_{\Gamma,S} ^{\lin}$, 
$\mathcal{D}_{\Gamma,S} ^{\hyp}$, $\mathcal{D}_{\Gamma,S} ^{\fin}$ 
which quantify the properties of \emph{soficity}, 
\emph{linear soficity} (over a fixed field $\mathbb{K}$), 
\emph{hyperlinearity} and \emph{weak soficity}, respectively. 
As before, these functions are independent of $S$ up to $\approx$. 
We do not require any features of these functions other than the facts discussed below, 
so we do not define them here. 
Each of these growth functions $\mathcal{D}_{\Gamma,S} ^{\ast}$ satisfy 
$\mathcal{D}_{\Gamma,S} ^{\ast}(n) \leq \mathcal{R}_{\Gamma} ^S (n)$
for all $n \in \mathbb{N}$ (\cite{ArzChe} Section 4.3), 
and one may ask how far apart the growth can be among residually finite groups $\Gamma$. 
In the case of the weakly sofic growth function, 
this was the subject of Question 66 of \cite{ArzChe}. 
We use Theorem \ref{MainThm} to resolve this question. 

\begin{propn} \label{MetricIneqProp}
For all $n \in \mathbb{N}$, 
\begin{itemize}
\item[(i)] $ \mathcal{D}_{\Gamma,S} ^{\lin}(n) \leq \mathcal{D}_{\Gamma,S} ^{\sof} (n)$; 
\item[(ii)] $\mathcal{D}_{\Gamma,S} ^{\hyp}(n)\leq \mathcal{D}_{\Gamma,S} ^{\sof}(2n^2)$; 
\item[(iii)] $ \mathcal{D}_{\Gamma,S} ^{\fin}(n) \leq \mathcal{D}_{\Gamma,S} ^{\sof}(n)!$. 
\end{itemize}
\end{propn}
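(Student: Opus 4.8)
The plan is to bound each of the three approximation growth functions by the sofic growth function using the standard embeddings of finite symmetric groups (with the Hamming metric) into the relevant classes of metric groups, keeping track of how the dimension/order changes under these embeddings. Throughout, I would use the fact (from \cite{ArzChe} Section 4.3, implicit in the setup) that a sofic approximation of $B_S(n)$ can be taken to land in $(\Sym(N),d_{\Ham})$ for $N = \mathcal{D}_{\Gamma,S}^{\sof}(n)$, so that it suffices to embed $\Sym(N)$ appropriately and check that the approximation axioms are preserved, up to the indicated change in the parameter.

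For part (i), I would use the permutation representation: $\Sym(N)$ acts on the standard basis of $\mathbb{K}^N$, giving a homomorphism $\Sym(N) \hookrightarrow \GL_N(\mathbb{K})$ under which the normalized Hamming distance on $\Sym(N)$ equals the normalized rank distance on the image (the rank of $\sigma - \tau$ as a $0/1$-matrix is exactly the number of points moved by $\sigma\tau^{-1}$). Composing a sofic approximation into $\Sym(N)$ with this embedding yields a linear-sofic approximation into $\GL_N(\mathbb{K})$ with the same defect and injectivity radius and the \emph{same} parameter $N$, giving $\mathcal{D}_{\Gamma,S}^{\lin}(n) \le N = \mathcal{D}_{\Gamma,S}^{\sof}(n)$. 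For part (iii), the point is simply that a finite group with a bi-invariant metric is a special case of a weakly sofic target, but one must match the normalization: the regular representation embeds $\Sym(N)$ into $\Sym(N!)$, and Cayley's left-regular action carries the Hamming metric on $\Sym(N)$ to the Hamming metric on $\Sym(N!)$ (every non-identity element acts freely, so moves all $N!$ points), hence defect and separation are preserved and the target has order at most $N!\,$, giving $\mathcal{D}_{\Gamma,S}^{\fin}(n) \le (\mathcal{D}_{\Gamma,S}^{\sof}(n))!$.

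For part (ii), I would use the embedding of $\Sym(N)$ into the unitary group $U(N)$ (again via permutation matrices, now over $\mathbb{C}$), where the relevant hyperlinear metric is the normalized Hilbert--Schmidt distance $\|u - v\|_{HS}$ (normalized so that $\|I\|_{HS}^2 = 1$). Here the subtlety appears: for permutation matrices $\|P_\sigma - P_\tau\|_{HS}^2$ equals $2 d_{\Ham}(\sigma,\tau)$ in the \emph{un}-normalized Hamming count, i.e.\ $\frac{2}{N}\cdot(\text{fraction of points moved})$ after normalization, so Hilbert--Schmidt distance is only the \emph{square root} of (twice) the Hamming distance. Consequently a sofic approximation with Hamming defect $\varepsilon$ becomes a hyperlinear approximation with Hilbert--Schmidt defect on the order of $\sqrt{\varepsilon}$, and separation $\delta$ becomes separation on the order of $\sqrt{\delta}$ --- this square-root loss is exactly why one must pass from radius $n$ to radius $2n^2$ (the precise relationship between the radius parameter and the permitted defect/separation in the definition of $\mathcal{D}^{\ast}$, which I am assuming from \cite{ArzChe}, converts the quadratic worsening of the tolerances into a quadratic worsening of the radius). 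I expect reconciling this square-root distortion with the precise quantitative conventions of \cite{ArzChe} to be the main obstacle; once the conventions are pinned down, each estimate is a short computation with permutation matrices.
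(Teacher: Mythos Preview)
The paper does not give an argument here: its proof is a bare citation to three specific passages of \cite{ArzChe}. Your sketches are essentially the arguments that appear in those passages (permutation matrices with the normalized rank metric for (i); permutation matrices with the normalized Hilbert--Schmidt metric, and the resulting square-root distortion, for (ii); and recognizing $(\Sym(N),d_{\Ham})$ as a finite bi-invariant metric group for (iii)), so your approach matches the intended one.

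One simplification for (iii): the detour through the regular representation $\Sym(N)\hookrightarrow\Sym(N!)$ is unnecessary. The group $\Sym(N)$ equipped with its normalized Hamming metric is \emph{already} a finite group with a bi-invariant metric of diameter $1$, so a sofic approximation into $(\Sym(N),d_{\Ham})$ is literally a weak-sofic approximation into a finite metric group of order $N!$; this gives $\mathcal{D}_{\Gamma,S}^{\fin}(n)\le N!=\mathcal{D}_{\Gamma,S}^{\sof}(n)!$ with no further embedding. Your version, read literally, lands in $\Sym(N!)$, which has order $(N!)!$ and would give a worse bound unless you mean to keep the target group as (the image of) $\Sym(N)$ --- in which case the regular representation plays no role.
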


\begin{proof}
See respectively p.13 (following Definition 18); p.10 (following Definition 14) 
and p.14 (following Definition 21) of \cite{ArzChe} for the inequalities (i)-(iii). 
\end{proof}

If $Q$ be a finite group and 
there is a local embedding $\pi : B_S (n) \rightarrow Q$ then, 
letting $\rho : Q \rightarrow \Sym(Q)$ be the regular representation, 
$\rho \circ \pi : B_S (n) \rightarrow \Sym(Q)$ defines 
a $(n,1)$-approximation of $\Gamma$ in the metric group $(\Sym(Q),d_{\Ham})$ 
in the sense of \cite{ArzChe} Definition 1. 
We therefore have the following, also noted in \cite{ArzChe} Section 4.3. 

\begin{lem} \label{LEFMetricLem}
For all $n\in\mathbb{N}$, 
$\mathcal{D}_{\Gamma,S} ^{\sof} (n) \leq \mathcal{L}_{\Gamma} ^S (n)$. 
\end{lem}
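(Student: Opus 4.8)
The plan is to run the construction sketched in the paragraph immediately preceding the statement, and simply organise it into a proof. Fix $n$ and choose a finite group $Q$ with $\lvert Q \rvert = \mathcal{L}_{\Gamma}^S(n)$ together with a local embedding $\pi : B_S(n) \rightarrow Q$ (by the convention of Remark \ref{LEFgrwthCrmrk} we may assume $\langle \pi(B_S(n)) \rangle = Q$, although this is not needed here). Let $\rho : Q \hookrightarrow \Sym(Q)$ be the (right) regular representation and set $\psi = \rho \circ \pi : B_S(n) \rightarrow \Sym(Q)$, where $\Sym(Q)$ is equipped with the normalised Hamming metric $d_{\Ham}$. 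The goal is to check that $\psi$ is an $(n,1)$-approximation of $\Gamma$ in $(\Sym(Q), d_{\Ham})$ in the sense of \cite{ArzChe} Definition 1, and then read off the bound.

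The verification splits into the two defining conditions. For the multiplicative defect: if $g, h, gh \in B_S(n)$ then $\pi(gh) = \pi(g)\pi(h)$ since $\pi$ is a partial homomorphism, and because $\rho$ is a genuine group homomorphism this gives $\psi(gh) = \psi(g)\psi(h)$ on the nose, so $d_{\Ham}\big(\psi(gh), \psi(g)\psi(h)\big) = 0$. For the separation condition: if $g \neq h$ in $B_S(n)$ then $\pi(g) \neq \pi(h)$ by injectivity of $\pi$, so $\psi(g)\psi(h)^{-1} = \rho\big(\pi(g)\pi(h)^{-1}\big)$ is the regular representation of a non-identity element of $Q$; since every non-identity element acts fixed-point-freely in the regular representation, $d_{\Ham}\big(\psi(g), \psi(h)\big) = 1$. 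Both estimates are the strongest possible, so $\psi$ is an $(n,1)$-approximation of $\Gamma$ into $\Sym(Q)$.

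Finally, since the defining minimisation for $\mathcal{D}_{\Gamma,S}^{\sof}(n)$ records the least degree of a symmetric group carrying such an approximation of $B_S(n)$ (consistent with the inequalities of Proposition \ref{MetricIneqProp}), the existence of $\psi$ yields $\mathcal{D}_{\Gamma,S}^{\sof}(n) \leq \lvert Q \rvert = \mathcal{L}_{\Gamma}^S(n)$, which is the claim. I do not anticipate a real obstacle: the substantive points are the elementary observation that a regular representation separates group elements at maximal Hamming distance, and a careful matching of parameters with \cite{ArzChe} Definition 1 — in particular confirming that an exact local embedding produces an approximation with the best admissible parameters, here recorded as ``$(n,1)$''.
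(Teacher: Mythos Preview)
Your proposal is correct and follows exactly the approach indicated in the paper: the paper's proof consists precisely of the paragraph preceding the lemma, composing the local embedding with the regular representation to obtain an $(n,1)$-approximation into $(\Sym(Q),d_{\Ham})$. You have simply fleshed out the verification of the two defining conditions, which is entirely appropriate.
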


\begin{thm} \label{MetricGapThm}
For any $\ast \in \lbrace \sof,\lin,\hyp,\fin \rbrace$, 
there exists an increasing function $f^{\ast} : \mathbb{N} \rightarrow \mathbb{N}$ 
such that, for any increasing function $f:\mathbb{N} \rightarrow \mathbb{N}$, 
there exists a finitely generated residually finite group $B_f$ 
such that $f \preceq \mathcal{R}_{B_f}$ but 
$\mathcal{D}_{B_f} ^{\ast} \preceq f^{\ast}$. 
In particular, there exists a finitely generated residually finite group $\Gamma$ 
such that $\mathcal{D}_{\Gamma} ^{\ast} \precnapprox \mathcal{R}_{\Gamma}$. 
\end{thm}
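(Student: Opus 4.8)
The plan is to deduce Theorem \ref{MetricGapThm} directly from Theorem \ref{BfBoundThm} together with the chain of inequalities relating the various metric approximation growth functions to LEF growth and full residual finiteness growth. The groups $B_f$ are already in hand from Proposition \ref{BoRaSewaNoProp} and Theorem \ref{BoRaSewaThm}, which supplies the lower bound $f \preceq \mathcal{R}_{B_f}$. The work is therefore entirely on the side of bounding $\mathcal{D}_{B_f}^{\ast}$ above by a single function $f^{\ast}$ not depending on $f$.

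First I would recall that Lemma \ref{LEFMetricLem} gives $\mathcal{D}_{\Gamma,S}^{\sof}(n) \leq \mathcal{L}_{\Gamma}^S(n)$, so combining this with Theorem \ref{BfBoundThm} yields $\mathcal{D}_{B_f}^{\sof}(n) \preceq \exp(\exp(n))$, uniformly in $f$. This handles the case $\ast = \sof$ with $f^{\sof}(n) = \exp(\exp(n))$. Next I would propagate this bound to the other three cases using Proposition \ref{MetricIneqProp}. For $\ast = \lin$, part (i) gives $\mathcal{D}_{B_f}^{\lin}(n) \leq \mathcal{D}_{B_f}^{\sof}(n) \preceq \exp(\exp(n))$, so $f^{\lin} = \exp(\exp(n))$ works. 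For $\ast = \hyp$, part (ii) gives $\mathcal{D}_{B_f}^{\hyp}(n) \leq \mathcal{D}_{B_f}^{\sof}(2n^2)$; since $\exp(\exp(2n^2)) \approx \exp(\exp(n))$ after rescaling the argument (both are dominated by $\exp(\exp(Cn))$ for suitable $C$ — actually $2n^2$ is only polynomial, so $\exp(\exp(2n^2))$ is \emph{not} $\approx \exp(\exp(n))$; one should instead simply take $f^{\hyp}(n) = \exp(\exp(n^2))$ or absorb the square into a composition), we get a uniform bound of the stated form. For $\ast = \fin$, part (iii) gives $\mathcal{D}_{B_f}^{\fin}(n) \leq \big(\mathcal{D}_{B_f}^{\sof}(n)\big)! \preceq \big(\exp(\exp(n))\big)!$, and $\big(\exp(\exp(n))\big)! \leq \exp\big(\exp(\exp(n))\big)$ after a routine estimate using $k! \leq k^k$, so $f^{\fin}(n) = \exp(\exp(\exp(n)))$ suffices. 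In each case $f^{\ast}$ is an explicit increasing function independent of $f$.

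For the final clause, I would pick $f$ growing faster than whichever $f^{\ast}$ arose — concretely, choose $f$ increasing with $f(n) \npreceq f^{\ast}(n)$ — and apply the construction to obtain $B_f$ with $f \preceq \mathcal{R}_{B_f}$ but $\mathcal{D}_{B_f}^{\ast} \preceq f^{\ast} \precnapprox f \preceq \mathcal{R}_{B_f}$, giving $\mathcal{D}_{B_f}^{\ast} \precnapprox \mathcal{R}_{B_f}$ as a finitely generated residually finite group. One subtlety to check is that $\precnapprox$ genuinely holds, i.e.\ that $\mathcal{R}_{B_f} \not\preceq \mathcal{D}_{B_f}^{\ast}$; this is immediate from $\mathcal{D}_{B_f}^{\ast} \preceq f^{\ast}$ and $f^{\ast} \precnapprox \mathcal{R}_{B_f}$ by transitivity of $\preceq$.

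The main obstacle is bookkeeping rather than conceptual: one must be careful that the inequalities in Proposition \ref{MetricIneqProp} — which involve composing with $n \mapsto 2n^2$ and with the factorial — do not blow up out of the doubly- (or triply-) exponential range, so that the resulting $f^{\ast}$ is still a genuine increasing function of $n$ that can be outpaced by a sufficiently fast-growing $f$ via the Bou-Rabee–Seward construction. Since $\mathcal{R}_{B_f}$ can be made to grow arbitrarily fast (Theorem \ref{BoRaSewaThm}), while each $f^{\ast}$ is a \emph{fixed} elementary-recursive function, this separation always succeeds, and the only real care needed is in writing down the composition estimates cleanly.
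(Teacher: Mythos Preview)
Your proposal is correct and follows essentially the same approach as the paper's proof: take the Bou-Rabee--Seward groups $B_f$, bound $\mathcal{D}_{B_f}^{\sof}$ via Lemma \ref{LEFMetricLem} together with the uniform LEF-growth bound (Theorem \ref{BfBoundThm}, equivalently Theorem \ref{MainThm} (ii)--(iii)), propagate to the other profiles through Proposition \ref{MetricIneqProp}, and finish by choosing $f$ to outgrow the resulting fixed $f^{\ast}$. The paper's version is terser and does not record explicit forms for the $f^{\ast}$, but your more detailed bookkeeping (including the correct observation that the $n\mapsto 2n^2$ substitution forces $f^{\hyp}$ up to $\exp(\exp(n^2))$ rather than $\exp(\exp(n))$) is sound and the logical structure is identical.
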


\begin{proof}
Let $B_f$ be as in Theorem \ref{MainThm} (i), 
so the bound on $\mathcal{R}_{B_f}$ holds. 
The upper bound on $\mathcal{D}_{B_f} ^{\ast}$ 
follows by combining the inequalities from Proposition \ref{MetricIneqProp}; 
Lemma \ref{LEFMetricLem} and Theorem \ref{MainThm} (ii). 
The final claim holds for $\Gamma \cong B_f$, 
for any sufficiently quickly growing function $f$. 
\end{proof}

\section{A universal upper bound} \label{univubsect}

The next Proposition was pointed out to the author by A. Thom. 

\begin{propn} \label{univubprop}
For all $D,n  \in \mathbb{N}$ the set: 
\begin{center}
$L_{D,n} = \lbrace \mathcal{L}_{\Gamma} ^S (n) : \Gamma = \langle S\rangle \text{ LEF, }\lvert S \rvert \leq D \rbrace$
\end{center}
is bounded above. 
\end{propn}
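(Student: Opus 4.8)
The plan is to show that for fixed $D$ and $n$, the isomorphism type of the ball $B_S(n)$ in the Cayley graph $\Cay(\Gamma, \mathbf{S})$ of a $D$-generated group $\Gamma$ takes only finitely many values, and that this isomorphism type alone determines an upper bound on $\mathcal{L}_\Gamma^S(n)$. First I would observe that, by Proposition \ref{LEFmarkedgrps}, it suffices to bound $\Phi_\Gamma^{\mathbf{S}}(\lceil 3n/2 \rceil)$ from above, i.e. to produce for each such $\Gamma$ a finite group $\Delta$ with a generating $D$-tuple $\mathbf{T}$ realising the same based coloured ball of radius $\lceil 3n/2 \rceil$, whose order is bounded in terms of $D$ and $n$ only. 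Equivalently, working directly with local embeddings, it is enough to find, for each $\Gamma$, a finite group admitting a local embedding of $B_S(n)$ whose order depends only on the isomorphism type of $B_S(n)$ as a based coloured graph.

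The key point is a compactness or counting argument in the space $\mathcal{G}_D$ of marked $D$-generated groups. A based coloured ball of radius $m$ in a Cayley graph of a $D$-generated group has at most $\sum_{j=0}^{m}(2D)^j$ vertices and at most $D$ edges of each colour leaving each vertex, so there are only finitely many possible isomorphism types of such balls for fixed $D$ and $m$; call this finite set $\mathcal{B}_{D,m}$. For each $\beta \in \mathcal{B}_{D,m}$ that actually arises as $B_{\mathbf{S}'}(m)$ for some LEF (hence some finite!) marked group $(\Gamma', \mathbf{S}')$, we may simply \emph{choose} one witnessing finite group $\Delta_\beta = \langle \mathbf{T}_\beta \rangle$ with $\nu((\Gamma',\mathbf{S}'),(\Delta_\beta,\mathbf{T}_\beta)) \geq m$; since LEF groups are exactly the limits in $\mathcal{G}_D$ of finite groups (Vershik--Gordon), and realising a ball of radius $m$ only requires approximation at scale $2^{-m}$, such a finite $\Delta_\beta$ exists whenever the ball arises at all. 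Setting $M_{D,m} = \max\{|\Delta_\beta| : \beta \in \mathcal{B}_{D,m} \text{ realisable}\}$, which is a maximum over a finite set, we get a finite bound. Then for any LEF $\Gamma = \langle S \rangle$ with $|S| \leq D$, taking $m = \lceil 3n/2 \rceil$, the ball $B_S(m)$ has some type $\beta \in \mathcal{B}_{D,m}$, and by Lemma \ref{LEFmarkedgrpslem2} there is a local embedding $B_S(n) \rightarrow \Delta_\beta$, so $\mathcal{L}_\Gamma^S(n) \leq M_{D,\lceil 3n/2\rceil}$, giving the desired bound $L_{D,n} \subseteq \{1, \ldots, M_{D,\lceil 3n/2 \rceil}\}$.

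The main obstacle — and the place where some care is genuinely needed — is justifying that for every \emph{realisable} ball type $\beta$ one can find a finite witness $\Delta_\beta$ rather than merely an infinite LEF witness. This is exactly where the LEF hypothesis is used and where Proposition \ref{LEFmarkedgrps} (or directly the Vershik--Gordon characterisation via Proposition \ref{cpctmrkdgrps} on compactness of $\mathcal{G}_D$) does the work: if $(\Gamma, \mathbf{S})$ is LEF then it is a limit of finite marked groups, so some finite $(\Delta, \mathbf{T})$ lies within distance $2^{-m}$, meaning $\nu((\Gamma,\mathbf{S}),(\Delta,\mathbf{T})) \geq m$ and in particular $B_{\mathbf{T}}(m) \cong B_{\mathbf{S}}(m)$. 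One should be slightly careful that the set $\mathcal{B}_{D,m}$ of balls we range over is precisely the set of ball-types arising from LEF $D$-generated groups, but this causes no difficulty since we only ever need to bound $\mathcal{L}_\Gamma^S(n)$ for LEF $\Gamma$ in the first place. Note the bound produced this way is wildly ineffective, since choosing $\Delta_\beta$ invokes no constructive procedure — this is consistent with, and anticipates, the non-computability assertion of Theorem \ref{UBThm}.
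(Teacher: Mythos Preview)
Your proof is correct and follows essentially the same approach as the paper: both arguments observe that there are only finitely many isomorphism types of based coloured $m$-balls in Cayley graphs of $D$-generated groups, that each such ball arising from a LEF group is witnessed by some finite group, and that the maximum over this finite set of witness-orders bounds $\Phi_{\Gamma}^{\mathbf{S}}$ (and hence $\mathcal{L}_{\Gamma}^S$ via Proposition~\ref{LEFmarkedgrps}). The only cosmetic difference is that the paper bounds $\Phi_{\Gamma}^{\mathbf{S}}(n)$ directly and then invokes Proposition~\ref{LEFmarkedgrps}, whereas you work at radius $\lceil 3n/2\rceil$ and invoke Lemma~\ref{LEFmarkedgrpslem2} explicitly; these are the same argument unwound.
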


\begin{proof}
We use the perspective of Subsection \ref{markedsubsect}. 
Let $B_{D,n}$ be the set of marked balls of radius $n$ 
and valence at most $2D$ occuring in Cayley graphs of all 
$D$-generated groups. 
It is clear that $B_{D,n}$ is finite for every $D$ and $n$. 
Let $B^f _{D,n} \subseteq B_{D,n}$ be the set of those $n$-balls 
which occur in a Cayley graph of some $D$-generated finite group. 
For each $b \in B^f _{D,n}$ let $C(b)$ 
be the (nonempty) set of finite groups admitting marked Cayley graphs 
whose $n$-ball is isomorphic to $b$. 

Let $1 \leq d \leq D$, and let $\Gamma$, $\mathbf{a}$ 
and $\Phi_{\Gamma} ^{\mathbf{a}}(n)$ be as in 
Proposition \ref{LEFmarkedgrps}. 
Since $\Gamma$ is LEF, $B_{\mathbf{a}} (n) \in B_{D,n} ^f$, so: 
\begin{equation*}
\Phi_{\Gamma} ^{\mathbf{a}}(n) \leq 
\max_{b \in B^f _{D,n}} \min_{Q \in C(b)} \lvert Q \rvert, 
\end{equation*}
the right-hand side of which is independent of 
$(\Gamma,\mathbf{a})$. 
The result follows from Proposition \ref{LEFmarkedgrps}. 
\end{proof}

We may ask, for fixed $D$, how quickly an upper bound for 
$L_{D,n}$ need grow as a function of $n$. 
As it transpires, the growth is very fast indeed. 

\begin{thm} \label{norecbdthm}
Let $L_{D,n}$ as in Proposition \ref{univubprop}. 
Let $\Theta_D (n) = \max (L_{D,n})$. 
Then for $D \geq 2$, $\Theta_D$ is not bounded above 
by any recursive function. 
\end{thm}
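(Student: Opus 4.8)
The plan is to argue by contradiction: suppose some recursive function $R$ satisfies $\Theta_D(n) \leq R(n)$ for all $n$, and derive a contradiction with the undecidability of the word problem. The natural source of such a contradiction is a finitely presented group $\Gamma = \langle S \mid \mathcal{R} \rangle$ (with $\lvert S \rvert = D$) having unsolvable word problem, chosen to be residually finite so that it is LEF (and hence its Cayley balls occur among the finite ones). The key observation linking the two is Corollary \ref{LEFRFequivcoroll} together with Proposition \ref{LEFRFpropn}: for such $\Gamma$ we have $\mathcal{R}_\Gamma^S(n) = \mathcal{L}_\Gamma^S(n)$ for all $n \geq n_0$, where $n_0$ bounds the lengths of relators. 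So a recursive upper bound on $\mathcal{L}_\Gamma^S$ would give a recursive upper bound on $\mathcal{R}_\Gamma^S$, and hence on the size of a finite quotient separating any given ball --- and that would make the word problem decidable by exhaustive search over homomorphisms to groups of bounded size.

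Concretely, the first step is to recall (or cite) that there exist finitely presented residually finite groups with unsolvable word problem; this is classical, but some care is needed since many standard examples (Boone--Higman type) are not residually finite. The right input is a finitely presented residually finite group with unsolvable word problem --- such examples exist (e.g., via constructions of Meskin, or more robustly, one can take a suitable finitely presented residually finite group containing a recursively but not recursively-enumerably-complementably generated subgroup). Actually the cleanest route: there exist finitely presented \emph{residually finite} groups whose word problem, while solvable in a weak sense, has non-recursive \emph{Dehn function} --- but that is not quite strong enough. The genuinely needed fact is the existence of a finitely presented residually finite group $\Gamma$ for which there is no recursive bound on $\mathcal{R}_\Gamma(n)$; equivalently, no recursive bound on the order of the smallest finite quotient in which $B_S(n)$ embeds. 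Such groups exist: this is precisely the content of the Kharlampovich--Myasnikov-style results, or one may invoke Bou-Rabee--Kaletha / the observation that a finitely presented residually finite group with unsolvable word problem has non-recursive RF growth (since the word problem reduces to: enumerate $B_S(n)$, compute $\mathcal{R}_\Gamma(n)$, check all quotients of that order). Since finitely presented residually finite groups with unsolvable word problem \emph{do} exist (Meskin, or by taking appropriate free-by-cyclic or ascending HNN constructions), we are done with the input.

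The second step is the reduction itself. Fix such a $\Gamma = \langle S \mid \mathcal{R}\rangle$, residually finite, finitely presented, with $\lvert S\rvert = D \geq 2$ and unsolvable word problem, with $n_0$ a bound on relator lengths. By Proposition \ref{LEFRFpropn}, $\mathcal{R}_\Gamma^S(n) = \mathcal{L}_\Gamma^S(n)$ for $n \geq n_0$. Since $\Gamma$ is LEF on $D$ generators, $\mathcal{L}_\Gamma^S(n) \leq \Theta_D(n)$. Now suppose for contradiction that $\Theta_D \leq R$ for some recursive $R$. Then for $n \geq n_0$, $\mathcal{R}_\Gamma^S(n) \leq R(n)$. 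To decide whether a given word $w$ of length $\ell$ is trivial in $\Gamma$: set $n = \max(\ell, n_0)$; $w$ is non-trivial in $\Gamma$ iff $w$ is non-trivial in some finite quotient of $\Gamma$ of order at most $R(n)$ (because $B_S(n)$ embeds in some such quotient, and $w \in B_S(n)$). Enumerate all groups of order at most $R(n)$ and all maps $S \to Q$, check the relator conditions via Lemma \ref{extendlem} to find the homomorphisms, and check whether any kills $w$: this is a finite, effective search. Hence the word problem is solvable --- contradiction. Therefore no such recursive $R$ exists, i.e. $\Theta_D$ is not bounded above by any recursive function.

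The main obstacle is securing the input group correctly: one must exhibit (or cite) a finitely presented \emph{residually finite} group on $\geq 2$ generators with unsolvable word problem (or, equivalently for our purposes, with non-recursive full RF growth). The subtlety is that the celebrated examples of finitely presented groups with unsolvable word problem (Novikov--Boone) are typically \emph{not} residually finite, while residually finite finitely presented groups form a more restrictive class; however, McKinsey's theorem gives solvable word problem for \emph{recursively presented} residually finite groups, so one cannot have unsolvable word problem there either. The resolution is that we do not actually need unsolvable word problem --- we need only non-recursive RF growth, which is compatible with solvable (but not uniformly/efficiently decidable) word problem. So the correct input is a finitely presented residually finite group whose RF growth dominates every recursive function; the existence of such groups is known (this follows, for instance, from embedding theorems producing finitely presented residually finite groups in which a fixed sequence of elements is separated only by enormous quotients, or can be extracted from the literature on residual finiteness growth, e.g. work of Bou-Rabee and collaborators). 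Once that is in hand, the reduction above is entirely routine.
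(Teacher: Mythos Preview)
There is a genuine gap in the final ``resolution'' paragraph. The input you settle on --- a single finitely presented residually finite group whose full RF growth dominates every recursive function --- does not exist. You yourself observe that McKinsey's theorem gives solvable word problem for every finitely presented residually finite group; combined with Proposition~\ref{WordProbProp} (and Proposition~\ref{LEFRFpropn}) this forces $\mathcal{R}_{\Gamma}^{S}=\mathcal{L}_{\Gamma}^{S}$ to be a \emph{recursive} function for any such $\Gamma$, and a recursive function cannot dominate all recursive functions. So the object on which your contradiction hinges is a phantom, and the argument as written does not close.

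The paper's proof avoids this by not seeking one universal witness. Instead, for \emph{each} recursive $f$ it invokes the Kharlampovich--Myasnikov--Sapir Theorem~\ref{KMSthm} to produce a finitely presented residually finite $\Delta(f)$ with $\mathcal{R}_{\Delta(f)}\succeq f$; finite presentability then gives $\mathcal{L}_{\Delta(f)}\succeq f$ via Proposition~\ref{LEFRFpropn}. A second point you leave unaddressed --- the number of generators --- is handled by Wilson's embedding Theorem~\ref{Wilsonthm}, which places $\Delta(f)$ inside a $2$-generated residually finite group $\hat{\Delta}(f)$; LEF growth is monotone under passing to supergroups (Lemma~\ref{subgrplem}), so $\mathcal{L}_{\hat{\Delta}(f)}\succeq f$ and hence $\Theta_{2}\succeq f$. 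Since $f$ was an arbitrary recursive function, the short Lemma~\ref{recurselem} finishes. Your contradiction strategy can be repaired along exactly these lines: after assuming $\Theta_{D}\leq R$, apply KMS with a suitable $f$ built from $R$, then embed via Wilson; but the single-group version you wrote cannot work.
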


From Proposition \ref{univubprop} and Theorem \ref{norecbdthm} 
we immediately have Theorem \ref{UBThm}. 
Theorem \ref{norecbdthm} is obtained by combining the following 
two results. 

\begin{thm}[Wilson, \cite{Wilson} Theorem A] \label{Wilsonthm}
Every countable residually finite group $\Gamma$ may be embedded 
into a $2$-generated residually finite group $\hat{\Gamma}$. 
\end{thm}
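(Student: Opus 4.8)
The plan is to realize $\hat{\Gamma}$ as a $2$-generated subgroup of an explicit residually finite group, so that residual finiteness of $\hat{\Gamma}$ is automatic. The key observation is that residual finiteness passes to arbitrary subgroups: once $\Gamma$ is embedded in some residually finite group $P$ and one exhibits two elements $a,b \in P$ with $\Gamma \leq \langle a,b \rangle$, the group $\hat{\Gamma} := \langle a,b \rangle \leq P$ is automatically $2$-generated, residually finite, and contains $\Gamma$. Thus the entire content of the theorem is to construct such a pair $(a,b)$ inside a residually finite ambient group; residual finiteness is not an extra condition to be checked at the end, but a property we build in from the start by keeping everything inside a product of finite groups.

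First I would fix the ambient group. Since $\Gamma$ is countable and residually finite, choose finite quotients $\rho_n : \Gamma \rightarrow Q_n$ whose kernels intersect trivially, giving a diagonal embedding $\Gamma \hookrightarrow \prod_{n} Q_n$ into the Cartesian product of finite groups. Embedding each finite $Q_n$ into a finite alternating group, I may assume $\Gamma \hookrightarrow P := \prod_n \Alt(m_n)$, which is residually finite (indeed profinite). I would take each $m_n$ large enough to leave ``spare symbols'' at coordinate $n$ beyond those needed to house $Q_n$, to be used below for encoding generators.

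Next comes the construction of the pair $(a,b)$, for which I would follow the template of B.H. Neumann's groups (Subsection \ref{BoRaSewaSubsect}). Fix a countable generating set $y_1, y_2, \ldots$ of $\Gamma$, each $y_i$ regarded as an element of $P$. The idea is to let $a$ act coordinate-wise as a long cycle, of length $m_n$ at coordinate $n$, playing the role of a ``shift'', and to let $b$ encode a single ``seed'' configuration supported in the spare symbols; the translates $a^{-j} b a^{j}$ and their products should then reproduce, coordinate by coordinate, the permutations realizing the $y_i$. To guarantee that $\langle a, b\rangle$ genuinely contains $\Gamma$, rather than a proper quotient in which distinct coordinates become synchronized, I would take the cycle lengths $m_n$ pairwise coprime, exactly as the distinct degrees $d(n)$ keep the factors $\Alt(d(n))$ independent in Neumann's construction: by the Chinese Remainder Theorem the powers of $a$ then address individual coordinates independently, so the relevant coordinate-projections of $\langle a,b \rangle$ can be prescribed freely.

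The hard part will be this construction of $(a,b)$ itself, namely arranging that $\langle a,b \rangle$ simultaneously (i) contains a faithful copy of $\Gamma$ and (ii) remains inside the product of finite alternating groups $P$. These two demands pull in opposite directions. The most natural way to generate all of $\Gamma$'s coordinate-actions from two elements is a wreath-product-type ``shift plus seed'' construction, but Theorem \ref{GruenThm} shows that an honest infinite wreath product of a nonabelian group is never residually finite, so the construction must capture the generating power of the shift while stopping short of building a genuine wreath product, keeping the group inside $P$. Threading this needle is precisely where the coprimality of the $m_n$ and the careful placement of the seed in the spare symbols must be combined: verifying that the translates of the seed are independent enough to recover each $y_i$, yet constrained enough that no infinite wreathing occurs, is the technical core of the argument.
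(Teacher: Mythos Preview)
The paper does not prove this theorem. Theorem \ref{Wilsonthm} is stated with attribution to Wilson (\cite{Wilson}, Theorem A) and is used as a black box in the proof of Theorem \ref{norecbdthm}; no argument for it appears anywhere in the paper. So there is no ``paper's own proof'' against which to compare your proposal.

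That said, your strategy is broadly in the spirit of Wilson's original argument: embed $\Gamma$ diagonally into a Cartesian product $P$ of finite (alternating) groups and then manufacture two elements $a,b\in P$ with $\Gamma\leq\langle a,b\rangle$, so that residual finiteness is inherited from $P$. The reduction in your first two paragraphs is entirely correct and is exactly the right framing.

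Where your proposal remains genuinely incomplete is the construction of $(a,b)$ itself. You describe $a$ as a product of cycles of pairwise coprime lengths and $b$ as a ``seed'', and assert that conjugates $a^{-j}ba^{j}$ and their products will recover each generator $y_i$ coordinatewise; but you do not say what $b$ actually is, nor give any mechanism by which a \emph{single} fixed seed, translated by powers of a fixed $a$, can produce the infinitely many prescribed permutations $\rho_n(y_i)$ at each coordinate $n$. The appeal to the Chinese Remainder Theorem lets powers of $a$ act with independent phase at each coordinate, but that alone does not explain how to reach an arbitrary target element of $\Alt(m_n)$ in the $n$th factor from a single seed; in Neumann's groups the seed is a fixed $3$-cycle and one obtains only the full alternating group at each coordinate, not a prescribed subgroup sitting in a prescribed way. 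Encoding the specific images $\rho_n(y_i)$ requires an additional idea (in Wilson's paper this is done via a careful placement of the generators along increasingly long segments of the cycle at each coordinate, with the lengths growing fast enough that the first $n$ generators are faithfully recoverable at the $n$th coordinate). You correctly identify this as the ``technical core'' and flag the tension with Theorem \ref{GruenThm}, but as written the proposal stops precisely at the point where the real work begins.
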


\begin{thm}[Kharlampovich-Myasnikov-Sapir, \cite{KMS} Theorem 4.22] \label{KMSthm}
For every recursive function $f: \mathbb{N} \rightarrow \mathbb{N}$, 
there exists a finitely presented residually finite group $\Delta(f)$
with $\mathcal{R}_{\Delta(f)} \succeq f$. 
\end{thm}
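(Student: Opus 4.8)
The plan is to factor the statement into two parts of very different character: an essentially elementary construction of a recursively presented witness whose residual finiteness growth already dominates $f$, followed by a residual-finiteness-preserving version of Higman's embedding theorem, which is the genuine technical heart of \cite{KMS}. The lower bound is then transferred across the embedding by a soft argument.

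First I would produce a $2$-generated, residually finite, \emph{recursively presented} group $H = H(f)$ with $f \preceq \mathcal{R}_H$. Recursive presentations afford ample freedom here, and one may take $H$ to be of Bou-Rabee--Seward type, $H = N(d,q)$: for recursive $f$ the arithmetic data $d,p,q$ of Proposition \ref{BoRaSewaNoProp} are themselves recursive, $H$ is a subdirect product of the finite simple groups $\Alt(d(n))$ (hence residually finite), and, since a finitely presented residually finite group has soluble word problem whereas $N(d,q)$ need not be finitely presented, the relevant point is rather that for recursive $f$ the word problem of $N(d,q)$ is soluble, so $H$ is recursively presented. The lower bound $f \preceq \mathcal{R}_H$ is exactly the computation following Theorem \ref{BoRaSewaThm}, which separates a nontrivial element $w_n$ of length linear in $n$ only in quotients of order at least $f(n)$. (Concretely, the mechanism is that each $\Alt(d(n))$ is simple of order controlled by $f$, so any finite quotient detecting $w_n$ must admit $\Alt(d(n))$ as a section.)

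The crux is to embed $H$ into a finitely presented group $\Delta = \Delta(f)$ \emph{while preserving residual finiteness}. Classical Higman embedding realises any finitely generated recursively presented group inside a finitely presented one through iterated HNN extensions and amalgams, but these operations generically destroy residual finiteness, so the main obstacle is to upgrade Higman's theorem to the residually finite category. The strategy is to encode the effective enumeration of the relators of $H$ by an $S$-machine---a symmetric Turing-machine variant adapted to group presentations---and to convert this machine into a finite set of defining relations for $\Delta$; residual finiteness is then secured by arranging that every admissible computation of the machine is witnessed in a finite quotient, which is verified through a detailed analysis of the associated van Kampen diagrams. This embedding, together with the quantitative control it provides, is where all the difficulty of the theorem resides.

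Finally I would transfer the lower bound. Since the two generators of $H$ are fixed words in the generators of $\Delta$, there is a constant $C$ with $B_H(n) \subseteq B_\Delta(Cn)$; hence any epimorphism $\phi : \Delta \twoheadrightarrow Q$ onto a finite group that is injective on $B_\Delta(Cn)$ restricts to an epimorphism $\phi|_H : H \twoheadrightarrow \phi(H)$ onto a finite group that is injective on $B_H(n)$, so $\mathcal{R}_H(n) \leq |\phi(H)| \leq |Q|$ and therefore $\mathcal{R}_H(n) \leq \mathcal{R}_\Delta(Cn)$. Thus $\mathcal{R}_H \preceq \mathcal{R}_\Delta$, and combining with $f \preceq \mathcal{R}_H$ from the first step yields $f \preceq \mathcal{R}_{\Delta(f)}$, that is $\mathcal{R}_{\Delta(f)} \succeq f$, as required. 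The only routine verifications are the word-length estimate $B_H(n) \subseteq B_\Delta(Cn)$ and the observation that a restriction of a finite quotient is again a finite quotient; everything hard is concentrated in the residual-finiteness-preserving embedding.
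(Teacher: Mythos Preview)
The paper does not supply its own proof of this theorem: it is quoted verbatim as Theorem~4.22 of \cite{KMS} and invoked as a black box in the proof of Theorem~\ref{norecbdthm}. There is therefore no argument in the present paper to compare your proposal against.

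As a sketch of the KMS result itself, your outline is reasonable and correctly locates the difficulty in a residual-finiteness-preserving analogue of Higman's embedding theorem, which is indeed the principal contribution of \cite{KMS}. Your transfer step is sound: full residual finiteness growth is monotone under passage to finitely generated subgroups for exactly the reason you give, and this is the same mechanism as in Lemma~\ref{subgrplem}. Your first step, routing through a Bou-Rabee--Seward group $N(d,q)$ as an intermediate recursively presented witness, is logically valid but somewhat roundabout compared with \cite{KMS}, where the finitely presented group is built directly from an $S$-machine encoding a computation tied to $f$, and the lower bound on $\mathcal{R}_{\Delta(f)}$ is extracted from the van~Kampen diagram analysis rather than inherited from a subgroup. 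Either route works; yours has the virtue of cleanly separating the soft growth estimate from the hard embedding theorem.
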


\begin{lem} \label{recurselem}
Let $f : \mathbb{N} \rightarrow \mathbb{N}$ and suppose that, 
for every recursive function $g:\mathbb{N} \rightarrow \mathbb{N}$ 
there exists $C = C(g)>0$ such that, for all $n \in \mathbb{N}$, 
$C f (Cn) \geq g(n)$. 
Then $f$ is not bounded above by any recursive function.  
\end{lem}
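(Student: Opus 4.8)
The plan is to show the contrapositive: if $f$ were bounded above by a recursive function $h$, then $h$ would contradict the hypothesis that $Cf(Cn) \geq g(n)$ for every recursive $g$. The trick is to construct, from $h$, a recursive function $g$ that grows too fast for the hypothesis to be satisfiable. The key point is that the hypothesis is essentially a statement that $f$ dominates every recursive function up to rescaling, so we should exhibit $g$ which diagonalises against all possible rescalings of $h$ simultaneously.

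Concretely, here is how I would carry it out. Suppose for contradiction that $f(n) \leq h(n)$ for all $n$, where $h$ is recursive; without loss of generality $h$ is nondecreasing (replace $h(n)$ by $\max_{k \leq n} h(k)$, still recursive). Now define a recursive function $g$ that outpaces every rescaling $n \mapsto C\,h(Cn)$. A natural choice is to set $g(n) = n \cdot h(n^2)^n + 1$ or similar — something that grows faster than $C h(Cn)$ for every fixed $C$ once $n$ is large. More carefully: we need $g$ such that for every $C$ there is no uniform bound $Cf(Cn) \geq g(n)$, i.e. for every $C$ we need $g(n) > C h(Cn)$ for at least one $n$ (indeed for all large $n$). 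Taking $g(n) := (n+1)\cdot h\big((n+1)(n+1)\big) + 1$ works: given any $C$, pick $n \geq C$, so $Cn \leq n^2 < (n+1)^2$ and $C \leq n+1$, hence $C\,h(Cn) \leq (n+1)\,h((n+1)^2) < g(n)$, using monotonicity of $h$. Then combining with $f \leq h$ we get $C f(Cn) \leq C h(Cn) < g(n)$, contradicting the hypothesis applied to this recursive $g$.

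I expect the only real subtlety to be bookkeeping: making sure that the $g$ I write down is genuinely recursive (it is, being built from the recursive $h$ by composition, multiplication and addition), that the monotonisation step is legitimate, and that the inequalities chain in the right direction with the quantifier order ``for every recursive $g$ there exists $C$'' correctly negated. There is no deep obstacle here — this is a clean diagonalisation — but one must be careful that the hypothesis gives us a $C$ depending on $g$, and we are choosing $g$ after $h$ but the adversary chooses $C$ after $g$; since our $g$ beats $C\,h(Cn)$ for all large $n$ regardless of $C$, the order of quantifiers is handled. Finally I would note that this lemma is exactly what is needed to deduce Theorem \ref{norecbdthm} from Theorems \ref{Wilsonthm} and \ref{KMSthm}: given a recursive $g$, apply Theorem \ref{KMSthm} to get a finitely presented residually finite $\Delta(g)$ with $\mathcal{R}_{\Delta(g)} \succeq g$, embed it by Theorem \ref{Wilsonthm} into a $2$-generated residually finite $\hat{\Delta}$, use Lemma \ref{subgrplem} together with $\mathcal{L} \approx \mathcal{R}$ for the finitely presented group (Corollary \ref{LEFRFequivcoroll}) to transfer a lower bound on $\mathcal{R}_{\Delta(g)} \approx \mathcal{L}_{\Delta(g)}$ to a lower bound on $\mathcal{L}_{\hat{\Delta}} \leq \Theta_2$, giving $C\,\Theta_2(Cn) \geq g(n)$, and then invoke the present lemma.
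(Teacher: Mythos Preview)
Your proof is correct and follows essentially the same approach as the paper: both argue by contradiction, take a recursive upper bound for $f$, and diagonalise by constructing a recursive function of the form roughly $n \mapsto n \cdot h(n^2)$ which cannot be dominated by $C f(Cn)$ for any fixed $C$. The paper writes this as $\tilde{f}(n) = n \cdot \max_{1 \leq m \leq n^2} \overline{f}(m)$ (building monotonisation into the definition), whereas you monotonise $h$ first and then set $g(n) = (n+1)\,h((n+1)^2) + 1$; these are cosmetic variations on the same idea, and your handling of the quantifier order is clean.
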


\begin{proof}
Suppose otherwise, let $\overline{f}:\mathbb{N}\rightarrow \mathbb{N}$  
be recursive with $\overline{f} \geq f$, and define: 
\begin{equation*}
\tilde{f} (n) = n \cdot\max_{1 \leq m \leq n^2} \overline{f} (m),
\end{equation*}
so that $\tilde{f}$ is recursive. 
Set $\tilde{C} = C(\tilde{f})$, so that for all $n$, 
$\tilde{C} f(\tilde{C} n) \geq \tilde{f}(n)$. 
Taking $n > 2 \tilde{C}$ we have: 
\begin{equation*}
f (\tilde{C} n) > 2 \cdot \max_{1 \leq m \leq n^2} f(m) 
\geq 2 f (\tilde{C} n), 
\end{equation*}
a contradiction. 
\end{proof}

\begin{proof}[Proof of Theorem \ref{norecbdthm}]
By construction, $\Theta_D (n) \leq \Theta_{D+1} (n)$ 
for all $D$ and $n$.  
Thus it suffices to prove the result for $\Theta_2(n)$. 
Let $f : \mathbb{N} \rightarrow \mathbb{N}$ be a recursive function. 
The $\Delta (f)$ be as in Theorem \ref{KMSthm}, 
so that $\mathcal{R}_{\Delta(f)} \succeq f$. 
$\Delta (f)$ is finitely presented, 
so by Proposition \ref{LEFRFpropn}, 
$\mathcal{L}_{\Delta(f)} \succeq f$. 
Apply Theorem \ref{Wilsonthm} to $\Delta (f)$ 
to obtain a $2$-generated residually finite group $\hat{\Delta} (f)$, 
with $\Delta (f) \hookrightarrow \hat{\Delta} (f)$. 
Thus $f \preceq \mathcal{L}_{\Delta(f)} 
\preceq \mathcal{L}_{\hat{\Delta} (f)}$ (by Lemma \ref{subgrplem}). 
The conclusion follows from Lemma \ref{recurselem}. 
\end{proof}

\begin{rmrk}
\normalfont
Proposition \ref{univubprop} shows that there is a single function 
which is an upper bound for the LEF growth of all $D$-generated 
LEF groups. 
As noted in the Introduction, 
this is in sharp contradistinction to the case of the full 
residual finiteness growth, 
as is shown by Bou-Rabee and Seward's Theorem \ref{BoRaSewaThm}. 
Indeed this distinction gives us a simple proof of 
Theorem \ref{MainThm} (i) and (ii) (without explicit bounds on $f_0$). 
Let $D \geq 2$ and let 
$\Gamma$ be any $D$-generated residually finite group satisfying 
$\mathcal{R}_{\Gamma} \succnapprox F_D$ 
(for instance let $\Gamma = B_f$ be as in Theorem \ref{BoRaSewaThm} 
with $f \succnapprox F_D$). 
Then $\mathcal{R}_{\Gamma} \succnapprox \mathcal{L}_{\Gamma}$. 
\end{rmrk}

\begin{propn} \label{WordProbProp}
If $\Gamma$ is recursively presented LEF, with solvable word problem, 
then $\mathcal{L}_{\Gamma} ^S$ is a recursive function. 
\end{propn}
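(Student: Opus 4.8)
The plan is to exhibit an explicit algorithm that, on input $n$, computes $\mathcal{L}_{\Gamma}^S(n)$. The key point is that the relevant data — the isomorphism type of the ball $B_S(n)$ as a based edge-coloured graph, together with all finite groups admitting a local embedding of that ball — is recursively enumerable, and that we can certify we have found the \emph{smallest} such group once we reach it. Concretely, let $\Gamma$ be presented by a recursive presentation $\langle S \mid R \rangle$ with $S = \{s_1, \ldots, s_d\}$, and suppose we have a decision procedure for the word problem. First I would use the word problem to compute, for each $n$, the isomorphism type of the based coloured ball $B_S(n)$ in $\Cay(\Gamma, \mathbf{S})$: the vertices are the (finitely many) words of length $\leq n$ in $F(S)$ modulo the equivalence ``represents the same element of $\Gamma$'', which is decidable by hypothesis, and the coloured edges are read off directly. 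This gives an explicit finite combinatorial object $b_n$.

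Next I would search for finite groups realizing a local embedding of $b_n$. Enumerate all finite groups $Q$ (say by multiplication table) in order of increasing order; this is a recursive enumeration. For each candidate $Q$ and each function $\pi$ from the vertex set of $b_n$ into $Q$ — there are finitely many — check whether $\pi$ is a local embedding: that it is injective, and that whenever two vertices $g, h$ of $b_n$ and their product $gh$ all lie in $b_n$ (something we can read off from the graph $b_n$, since $b_n$ records which words lie in the ball), we have $\pi(gh) = \pi(g)\pi(h)$. This is a finite check. The first $Q$ for which some $\pi$ passes gives a candidate value $|Q|$ for $\mathcal{L}_\Gamma^S(n)$; since we enumerate in order of increasing order, and since $\mathcal{L}_\Gamma^S(n) < \infty$ as $\Gamma$ is LEF, this search terminates and returns exactly $\mathcal{L}_\Gamma^S(n)$.

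The only genuinely delicate point is the verification that $\pi$ is a local embedding; for this we must know precisely which triples $(g, h, gh)$ all lie in $B_S(n)$, which is exactly what the combinatorial object $b_n$ encodes — and computing $b_n$ is where solvability of the word problem is used. I would also note that one does \emph{not} need recursive presentability for the ball computation itself (the word problem suffices), but the statement includes it for uniformity with the surrounding discussion; in any case, given $b_n$, the remainder of the algorithm (enumerate finite groups, check all maps) is manifestly effective and independent of any presentation. Thus $n \mapsto \mathcal{L}_\Gamma^S(n)$ is computed by a total algorithm, so it is recursive. The main obstacle to write carefully is simply bookkeeping: making sure the ``product lies in the ball'' relation is extracted correctly from $b_n$, and confirming that the search over finite groups is bounded — which follows because $\Gamma$ being LEF guarantees the target value is finite, so the enumeration cannot run forever.
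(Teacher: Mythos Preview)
Your proposal is correct and follows essentially the same approach as the paper: compute the ball $B_S(n)$ using solvability of the word problem, then enumerate finite groups in increasing order until one admits the required map. The only cosmetic difference is that the paper phrases the search step as comparing $n$-balls in Cayley graphs of finite groups with $B_S(n)$, whereas you check directly for a local embedding $B_S(n)\to Q$; your version hews more literally to the definition of $\mathcal{L}_\Gamma^S(n)$, and your observation that recursive presentability is not actually used in the argument is also correct.
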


\begin{proof}
First, on receiving input $n \in \mathbb{N}$,
construct the ball of radius $n$ in $\Cay(\Gamma,S)$. 
Such an algorithm for $\Gamma$ exists by solubility of the word 
problem. 

Second, enumerate all Cayley graphs of finite groups, 
in ascending order of group order, and compare their balls 
of radius $n$ with that of $\Cay(\Gamma,S)$. 
The first finite group $Q$ in our enumeration which admits 
a Cayley graph whose $n$-ball is colour-isomorphic 
to that of $\Cay(\Gamma,S)$ satisfies 
$\lvert Q \rvert = \mathcal{L}_{\Gamma} ^S (n)$. 
\end{proof}

\section{Further Questions}

As explained in Section \ref{WPSect}, 
the groups constructed in Theorem \ref{WreathUBThm} 
which afford local embeddings 
from finite subsets of $\Delta \wr \Gamma$ have the form 
$P \wr Q$, where $P$ and $Q$ afford local embeddings from corresponding 
finite subsets of $\Delta$ and $\Gamma$, respectively. 
It is known that not every local embedding from a wreath product need 
arise this way, however. For instance, 
Cornulier shows that $\mathbb{Z} \wr \mathbb{Z}$ is locally 
embeddable into the class of finite alternating groups 
(in the sense of Remark \ref{LEFgrwthCrmrk}; 
a proof is available in Subsection 5.4 of \cite{Capr}: 
the argument is reminiscent of our proof of Theorem \ref{BfBoundThm}). 
It is therefore interesting to ask for other constructions 
of local embeddings of wreath products, and to enquire which are the 
most efficient in the sense of LEF growth. 

\begin{prob} \label{WPopenQ}
Determine exactly the LEF growth of some LEF wreath products 
$\Delta \wr \Gamma$ beyond those provided for 
by the conclusions of Theorem \ref{WPMainThm}. 
In particular, determine $\Delta \wr \Gamma$ 
in some cases for which either (a) $\Gamma$ is not ELEF; 
(b) $\Delta$ is infinite or (c) $Z(\Delta) \neq 1$. 
\end{prob}

We turn next to a class of groups arising from topological 
dynamics, which have recently emerged as a source of 
exotic examples in geometric group theory. 
For $X$ a topological space and $T$ a self-homeomorphism of $X$, 
one can define the topological full group $[[T]]$ 
of the system $(X,T)$. 
Grigorchuk and Medynets \cite{GrigMedy} proved that for 
$(X,T)$ a Cantor minimal system, $[[T]]$ is LEF. 
In this setting Matui \cite{Matu} proved that 
the derived subgroup $[[T]]^{\prime}$ 
is finitely generated iff $(X,T)$ is topologically isomorphic 
to a minimal subshift over a finite alphabet. 
Under these conditions we pose the following. 

\begin{qu}
What is the LEF growth of $[[T]]^{\prime}$? 
How does it depend on the dynamical properties of $(X,T)$? 
\end{qu}

We may also ask for examples of finitely generated LEF groups 
exhibiting LEF growth functions beyond those observed 
in Theorem \ref{GrowthSpectrumThm}. 
A particular challenge would be to look for a 
group of \emph{intermediate} LEF growth.

\begin{qu} \label{intgrthQ}
Does there exist a finitely generated group $\Gamma$ 
such that $\mathcal{L}_{\Gamma}(n) \nsucceq \exp(n)$ 
and for all $C>0$, $\mathcal{L}_{\Gamma}(n) \npreceq n^C$? 
\end{qu}

By Proposition \ref{polyLEFnilpprop} 
and Remark \ref{growthlowerbdrmrk}, 
a group $\Gamma$ satisfying the conditions of Question \ref{intgrthQ} 
has intermediate word growth. 
To this end, it would be interesting to investigate 
the LEF growth of some groups of intermediate growth, 
including those introduced in \cite{Grig}, 
whose local embeddings were studied in that paper. 

\begin{prob}
Let $G_{\omega}$ be one of the finitely generated groups 
of intermediate growth from \cite{Grig}, 
where $\omega \in \lbrace 0,1,2 \rbrace^{\mathbb{N}}$ 
is a sequence which is not eventually constant. 
Estimate the LEF growth of $G_{\omega}$. 
\end{prob}

There are uncountably many isomorphism classes of 
finitely generated LEF groups. 
This may be seen either from the construction of Neumann 
described in Section \ref{MainThmSect} or from the 
results of \cite{Grig}. 
Applying to the groups obtained from these 
constructions a wreath product with a finite perfect group, 
it is not difficult to see that there are uncountably many 
isomorphism classes of finitely generated LEF groups 
which are not RF. 
However as yet we are not able to distinguish 
these groups from one another by their LEF growth. 
This leads to our next question. 

\begin{qu}
Does there exist an uncountable family $\mathcal{G}$ 
of pairwise non-isomorphic finitely generated LEF groups, 
such that for all $\Gamma,\Delta \in \mathcal{G}$, 
if $\mathcal{L}_{\Gamma} \approx \mathcal{L}_{\Delta}$ 
then $\Gamma \cong \Delta$? 
\end{qu}

Next, it is not known whether infinite free Burnside groups are LEF. 
Indeed it is known that either free Burnside groups 
of sufficiently large odd exponent 
are locally embeddable into the class $\mathcal{S}$ 
of nonabelian finite simple groups, 
or there is a non-residually finite hyperbolic group
(see Subsection 5.4 of \cite{Capr} for a discussion). 
Short of showing that these groups do not 
locally embed into $\mathcal{S}$ 
(and thereby demonstrating the existence of a non-residually finite 
hyperbolic group), one may ask for the following. 

\begin{prob}
Let $\Gamma$ be an infinite Burnside group 
of sufficiently large exponent. 
Prove a substantial lower bound on 
$\mathcal{L}_{\Gamma,\mathcal{S}} (n)$. 
\end{prob}

See also \cite{ArzChe} Question 53 and Conjecture 54 
for more on the connection between residual finiteness 
of hyperbolic groups and other embedding conditions. 

Regarding metric approximations, in Theorem \ref{MetricGapThm} 
we produced examples of groups with large full residual finiteness growth 
and small sofic profile by using LEF growth to bound the sofic profile. 
This begs the question of whether sofic profile can still be small in LEF groups 
with large LEF growth. 

\begin{qu}
Does there exist a finitely generated LEF group $\Gamma$ with 
$\mathcal{D}_{\Gamma} ^{\sof} \precnapprox \mathcal{L}_{\Gamma}$? 
How far apart can the growth of $\mathcal{D}_{\Gamma} ^{\sof}$ 
and $\mathcal{L}_{\Gamma}$ be? 
\end{qu}

Finally we can ask for more detailed quantitative information about 
the geometry and topology of the space of LEF groups 
inside the space $\mathcal{G}_d$ of marked $d$-generated groups. 
One consequence of the observations of Section \ref{univubsect} 
is that an $\epsilon$-net for this set must contain a 
(marking of a) group of order at least comparable to $f(1/\epsilon)$ 
for any recursive function $f$. 
In another direction we can ask for an estimate of the minimal 
size of an $\epsilon$-net, and how this compares 
to the minimal size of a net for $\mathcal{G}_d$. 

\begin{qu}
Let $f_d (n)$ and $g_d (n)$ be 
the number of marked balls of radius $n$ in 
all $d$ generated groups and all finite $d$-generated groups, 
respectively. How do $f_d (n)$ and $g_d (n)$ grow 
as functions of $n$? 
Is $g_d (n)/f_d (n)$ bounded away from zero? 
\end{qu}

An easy combinatorial argument shows that 
$f_d (n) = \exp(\exp(O_d(n)))$. 
If $\Gamma$ is an infinite group 
with no non-trivial finite quotients, 
which admits a finite presentation with $d$ 
generators and all relations of length at most $r$, 
then by Proposition \ref{LEFRFpropn} 
any ball of radius at least $r$ in the corresponding 
Cayley graph does not occur as a ball in the 
Cayley graph of any finite group. 
Examples of such groups $\Gamma$ 
include the Higman group (for which we may take $(d,r)=(4,5)$). 
Beyond this it seems far from clear at present 
what one should expect the behaviour of 
$f_d (n)$ and $g_d (n)$ to be. 

\subsection*{Acknowledgements}

This work was supported by ERC grant no. 648329 ``GRANT''. 
I am grateful to Vadim Alexeev, Goulnara Arzhantseva, Khalid Bou-Rabee, 
Masato Mimura and 
Andreas Thom for enlightening conversations which helped to 
shape my thinking on the subject of this paper. 
Parts of this work were undertaken while the author was 
a visiting fellow at the Hausdorff Research Institute for Mathematics 
in Bonn for the program ``Logic and Algorithms in 
Group Theory'', 
and other parts while attending the program ``$L^2$-invariants and 
their analogues in positive characteristic'' at 
the Instituto de Ciencias Matem\'{a}ticas in Madrid. 
It is my pleasure to thank the organizers 
for putting together these excellent programs, 
and HIM and ICMAT for providing pleasant working conditions.

\end{document}